\documentclass[12pt, oneside,reqno]{amsart}
\usepackage{amsmath, amsfonts, amssymb}
\usepackage{eucal}
\usepackage{mathrsfs}
\usepackage{latexsym}
\usepackage{bbm}
\usepackage{mathtools}
\usepackage{exscale}
\usepackage{cases}
\usepackage{xcolor}
\usepackage{comment}

\usepackage[pdfstartview=FitH,
            CJKbookmarks=true,
            bookmarksnumbered=true,
            bookmarksopen=true,
            colorlinks=true,
            linkcolor=blue,
            anchorcolor=blue,
            citecolor=red,
            urlcolor=blue
            ]{hyperref}

\numberwithin{equation}{section}
\newtheorem{theorem}{Theorem}[section]
\newtheorem{definition}[theorem]{Definition}
\newtheorem{lemma}[theorem]{Lemma}
\newtheorem{corollary}[theorem]{Corollary}
\newtheorem{proposition}[theorem]{Proposition}

\newtheorem{definition and theorem}[theorem]{Definition and Theorem}
\def\bl{\begin{lemma}}
\def\el{\end{lemma}}
\def\bc{\begin{corollary}}
\def\ec{\end{corollary}}
\def\bt{\begin{theorem}}
\def\et{\end{theorem}}
\def\bp{\begin{proposition}}
\def\ep{\end{proposition}}
\def\be{\begin{equation}}
\def\ee{\end{equation}}
\def\baa{\begin{align*}}
\def\eaa{\end{align*}}

\def\bd{\begin{definition}}
\def\ed{\end{definition}}

\allowdisplaybreaks[4]
\theoremstyle{plain}
\newtheorem{thm}{Theorem}[section]

\newtheorem*{ThmA}{Theorem A}
\newtheorem*{ThmB}{Theorem B}

\newtheorem*{ThmC}{Theorem C}

\newtheorem{lem}[thm]{Lemma}
\newtheorem{prop}[thm]{Proposition}
\newtheorem{defn}[thm]{Definition}
\newtheorem{cor}[thm]{Corollary}

\theoremstyle{remark}
\newtheorem{rem}[thm]{Remark}
\newcommand{\lsub}[1]{\hskip -1.0pt\lower.3ex\hbox{$_{#1}$}}

\theoremstyle{definition}

\newcommand{\sn}{\mathbb S^{n-1}}

\newcommand{\tr}{\mathbb R}
\newcommand{\rn}{\mathbb R^n}

\newcommand{\tH}{\mathcal{H}}

\newcommand{\di}{\int_{\rn}\int_{\rn}}

\newcommand{\rR}{{\rm R}}
\newcommand{\Ra}{{\rm R}_{\alpha}}
\newcommand{\Raf}{{\rm R}_{\alpha} f}
\newcommand{\Aff}{{\rm Aff}(n,1)}

\newcommand{\Ent}{{\rm E}}

\renewcommand{\chi}{\operatorname{1}}

\title[Chord Sobolev inequalities]{Chord Sobolev inequalities}
\author{Fernanda M. Ba$\hat{\rm E}$ta \quad  and  \quad Xiaxing Cai}

\begin{document}

\begin{abstract}
The paper establishes a new family of sharp analytic inequalities. Together with the fractional Sobolev inequalities of Almgren and Lieb \cite{AL}, they form a complete class of analytic inequalities, referred to as the chord Sobolev inequalities. A close connection between these inequalities and chord isoperimetric inequalities in integral geometry is established through a
functional extension of chord power integrals. The limiting cases of the chord Sobolev inequalities are derived, one of which yields a logarithmic Sobolev-type inequality. Combined with the work of Bourgain, Brezis, and Mironescu \cite{BBM}, these results complete the picture of the chord Sobolev inequalities, including
their endpoint cases.
\end{abstract}

\maketitle

\section{Introduction}

The {\it Sobolev inequality} states that for any $C^1$ function $f:\rn\rightarrow\tr$ with compact support,
\begin{equation}\label{Sobolev}
    \int_{\rn}|\nabla f(x)|dx\ge n\omega_n^{\frac{1}{n}}\|f\|_{\frac{n}{n-1}},
\end{equation}
where $|\nabla f(x)|$ is the Euclidean norm of the gradient of $f$ and $\|f\|_{p}=\big(\int_{\rn}|f(x)|^pdx\big)^{1/p}$ is the $L^p$ norm of $f$. Here $\omega_n=\pi^{n/2}/\Gamma(n/2+1)$ denotes the volume of the unit ball in $\rn$ and $\Gamma$ is the gamma function. This inequality lies at the interface of analysis and geometry, linking isoperimetric phenomena with functional inequalities, and has been the subject of extensive and influential research; see, e.g., \cite{Aub, BH, DEFFL, FN, FZ, HV, Ho, Yau, Zh4}. It extends to functions of bounded variation, serving as a functional analogue of the classical {\it isoperimetric inequality} (see, e.g., \cite{Aub2, BZ, Ci, Fed, FF}). For a measurable set $E\subset\rn$, the classical isoperimetric inequality states that
\[S(E)\ge n\omega_n^{\frac{1}{n}}|E|^{\frac{n-1}{n}},\]
where $S(E)$ and $|E|$ denote the surface area and volume of $E$, respectively.

Beyond the classical isoperimetric inequality, a number of important geometric inequalities  admit functional extensions. Notable examples include the Brunn–Minkowski inequality, the Blaschke–Santal\'o inequality, and the fractional isoperimetric inequality; their functional counterparts capture and naturally generalize the underlying geometric structures (see, e.g., \cite{AL, Ball,  FS,  Sch}). In particular, the fractional isoperimetric inequality can be lifted to the following fractional Sobolev inequality of Almgren and Lieb \cite{AL}; see also \cite{FS, Lud1, Lud2}. 
\vskip 5pt

\noindent{\bf Fractional Sobolev inequalities:} For $\alpha\in(-1,0)$ and $f\in W^{-\alpha,1}(\rn)$, there is a constant $\sigma_{n,\alpha}>0$ (with explicit value given in \eqref{optimal constant}; see also \cite{FFMMM, Garo}) such that
\begin{equation}\label{frac-Sob}
    2\sigma_{n,\alpha}\|f\|_{\frac{n}{n+\alpha}}\leq \di\frac{|f(x)-f(y)|}{|x-y|^{n-\alpha}}dxdy,
\end{equation}
where $W^{-\alpha,1}(\rn)$ denotes the fractional Sobolev space of integrable functions such that the right-hand side of \eqref{frac-Sob} is finite. Equality holds if and only if $f$ is a constant multiple of the characteristic function of a ball. 

Almgren and Lieb \cite{AL} established the P\'olya--Szeg\H{o} inequalities for 
%fractional Sobolev seminorms
%\[\|f\|_{W^{-\alpha,1}(\rn)}=\di\frac{|f(x)-f(y)|}{|x-y|^{n-\alpha}}dxdy.\]
the right-hand side of \eqref{frac-Sob} and the equality cases were later characterized by Frank and Seiringer \cite{FS}. A direct consequence is the above fractional Sobolev inequality. In particular, Bourgain, Brezis, and Mironescu \cite{BBM} showed that the fractional Sobolev inequality converges to the classical Sobolev inequality \eqref{Sobolev} in the limit $\alpha\rightarrow -1^+$. 

The main aim of this paper is to establish a new family of analytic inequalities which, together with the fractional Sobolev inequalities \eqref{frac-Sob}, form the {\it chord Sobolev inequalities} for $\alpha>-1$. These newly established inequalities, with their limiting cases, extend and complete the works of Almgren and Lieb \cite{AL} and Bourgain, Brezis and Mironescu \cite{BBM}.

    \begin{ThmA}\label{main1}
    For $\alpha\in (0,n)$ and non-negative $f\in L^{\frac{n}{n+\alpha}}(\rn)$, 
    \begin{equation}\label{chord-Sob}
        \sigma_{n,\alpha}\|f\|_{\frac{n}{n+\alpha}}\ge \di\frac{\min\{f(x),f(y)\}}{|x-y|^{n-\alpha}}dxdy,
    \end{equation}
    where the sharp constant is given by 
    \begin{equation}\label{optimal constant}
    \sigma_{n,\alpha}=\frac{n2^{\alpha}\pi^{\frac{n-\alpha-1}{2}}\Gamma(\frac{\alpha+1}{2})\Gamma(\frac{n}{2}+1)^{\frac{\alpha}{n}}}{|\alpha|\Gamma(\frac{n+\alpha}{2}+1)}.
\end{equation}
    Equality holds if and only if $f$ is a constant multiple of the characteristic function of a ball.
    \end{ThmA}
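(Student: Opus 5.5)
Our plan is to use the layer-cake representation to reduce \eqref{chord-Sob} to a geometric inequality for sets, and then recombine the layers using the reverse (concave) Minkowski inequality, since $\frac{n}{n+\alpha}<1$.

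\emph{Layer-cake reduction.} For non-negative $f$ we have $\min\{f(x),f(y)\}=\int_0^\infty \chi_{\{f>t\}}(x)\chi_{\{f>t\}}(y)\,dt$. By Fubini,
\[
\di\frac{\min\{f(x),f(y)\}}{|x-y|^{n-\alpha}}dxdy=\int_0^\infty I_\alpha(\{f>t\})\,dt,\qquad I_\alpha(E)=\int_E\int_E\frac{dxdy}{|x-y|^{n-\alpha}}.
\]
Up to a normalising factor, $I_\alpha(E)$ is the chord power integral of $E$.

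\emph{Set inequality.} By the Riesz rearrangement inequality, $I_\alpha(E)\le I_\alpha(E^*)$, where $E^*$ is the centred ball with $|E^*|=|E|$. Equality for finite-measure $E$ holds only when $E$ is a ball up to a null set. This uses the strict form of Riesz's inequality (Lieb), which applies because $|x|^{\alpha-n}$ is strictly symmetric decreasing. Scaling gives $I_\alpha(rB)=r^{n+\alpha}I_\alpha(B)$, so
\[
I_\alpha(E)\le c_{n,\alpha}|E|^{\frac{n+\alpha}{n}},\qquad c_{n,\alpha}=I_\alpha(B)\,\omega_n^{-\frac{n+\alpha}{n}}.
\]
To compute $I_\alpha(B)$ we use the chord formula $\int_B\int_B|x-y|^{\alpha-n}dxdy=\frac{n\omega_n}{2}\cdot\frac{2}{\alpha(\alpha+1)}\int_{\text{lines}}\sigma(L\cap B)^{\alpha+1}dL$. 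Alternatively, we use Crofton-type polar coordinates: setting $y=x+r\theta$ gives $\int_{\sn}\int_0^\infty r^{\alpha-1}|B\cap(B+r\theta)|\,dr\,d\theta$. Either route yields a Beta-function integral, and we expect it to produce $\sigma_{n,\alpha}$ in \eqref{optimal constant}. This computation is routine but tedious. The main check is that the normalisation matches $\sigma_{n,\alpha}$ exactly when the inequality is tested on $f=\chi_B$, where both sides must agree.

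\emph{Recombining layers.} Let $p=\frac{n}{n+\alpha}\in(\frac12,1)$. We need
\[
\int_0^\infty |\{f>t\}|^{1/p}dt\le\Big(\int_0^\infty p\,t^{p-1}|\{f>t\}|\,dt\Big)^{1/p}=\|f\|_p .
\]
This is the standard inequality for non-increasing $\phi(t)=|\{f>t\}|$, and it is where $p<1$ enters. To prove it, set $F(s)=\int_0^s\phi(t)^{1/p}dt$. Monotonicity gives $F(s)\ge s\phi(s)^{1/p}$, so $\phi(s)^{1/p}\cdot F(s)^{p-1}\cdot\ldots$; the cleaner route is
\[
\frac{d}{ds}F(s)^p=pF(s)^{p-1}\phi(s)^{1/p}\le p\,(s\phi(s)^{1/p})^{p-1}\phi(s)^{1/p}=p\,s^{p-1}\phi(s),
\]
using $p-1<0$. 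Integrating in $s$ gives the claim. Combining the three steps yields \eqref{chord-Sob} for $f\in L^p$. If $\phi(s)$ is infinite for some $s>0$, then $f\notin L^p$, so this case does not arise.

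\emph{Equality.} Equality forces equality in the last step for a.e.\ $s$. That requires $F(s)=s\phi(s)^{1/p}$ wherever $\phi(s)>0$, so $\phi$ is constant on the support, i.e.\ $f$ takes a single positive value $c$ on $\{f>0\}$. Equality in the Riesz step then forces this level set to be a ball. Hence $f=c\chi_B$ a.e., and conversely such $f$ attain equality by the choice of constant.

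The main obstacle we expect is pinning down the explicit constant $I_\alpha(B)$ and confirming it matches \eqref{optimal constant}, including the factor conventions relative to \eqref{frac-Sob}. The equality analysis via strict Riesz also needs care.
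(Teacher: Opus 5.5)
Your proof is correct and has the same skeleton as the paper's. Both use the layer cake for $\min\{f(x),f(y)\}$, then the Riesz rearrangement inequality on each superlevel set, then the level-set estimate $\int_0^\infty|\{f>t\}|^{1/p}\,dt\le\|f\|_p$, which is exactly Lemma~\ref{m1}. The differences are in how the two ingredients are proved.

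\textbf{The level-set estimate.} The paper gets Lemma~\ref{m1} in one line from Minkowski's integral inequality with exponent $1/p>1$. Its equality case then rests on the characterization of equality in Minkowski's inequality, namely the product structure $\chi_{\{f\ge t\}}(x)=\varphi(x)\psi(t)$. Your comparison $\frac{d}{ds}F^p\le p\,s^{p-1}\phi(s)$ is fully elementary, and it yields the equality case directly from $F(s)=s\phi(s)^{1/p}$.

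One point needs tidying. $\phi$ may blow up as $t\to0^+$ (for instance $|\{f>0\}|=\infty$ is allowed for $f\in L^p$), so $F$ is not a priori finite. Run the argument for $\phi_\varepsilon(t)=\phi(\max\{t,\varepsilon\})\le\phi$ and let $\varepsilon\to0$. Once finiteness is known, the equality analysis can be done on $\phi$ itself.

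\textbf{Equality in the rearrangement step.} The paper writes $|z|^{\alpha-n}=\int_0^\infty\chi_{r^{1/(\alpha-n)}B^n}(z)\,dr$ and applies Burchard's theorem for large $r$. You instead apply Lieb's strict Riesz inequality to the strictly symmetric decreasing kernel $|x|^{\alpha-n}$. Your route is shorter here. The paper's decomposition into balls pays off later, where it is reused for kernels that are not strictly decreasing: $|x-y|^q$, $\max\{0,|x-y|^{-n}-1\}$ and $\min\{1,|x-y|^{-n}\}$.

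\textbf{The constant.} The constant you left unchecked does match. Use $|B^n\cap(B^n+r\theta)|=\int_{B^n|\theta^\perp}(X_{B^n}(y,\theta)-r)_+\,dy$ together with $\int_0^\infty r^{\alpha-1}(X-r)_+\,dr=X^{\alpha+1}/(\alpha(\alpha+1))$. This gives
\[
\int_{B^n}\int_{B^n}\frac{dx\,dy}{|x-y|^{n-\alpha}}=\frac{n\omega_n2^{\alpha}\pi^{\frac{n-1}{2}}\Gamma(\frac{\alpha+1}{2})}{\alpha\,\Gamma(\frac{n+\alpha}{2}+1)},
\]
and dividing by $\omega_n^{(n+\alpha)/n}$ gives exactly \eqref{optimal constant}.
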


The inequality \eqref{chord-Sob} can be viewed as an $L^1$-analogue of the sharp Hardy--Littlewood--Sobolev inequality established by Lieb \cite{Lieb}. For $\alpha\in(0,n)$ and non-negative $f\in L^{\frac{2n}{n+\alpha}}(\rn)$, there is an explicitly known constant $\gamma_{n,\alpha}>0$ such that
\[\gamma_{n,\alpha}\|f\|_{\frac{2n}{n+\alpha}}^2\ge \di\frac{f(x)f(y)}{|x-y|^{n-\alpha}}dxdy,\]
which is equivalent to the fractional $L^2$ Sobolev inequality (see \cite{Ca2}).

In contrast to the \(L^2\) theory, the present \(L^1\) setting is naturally
connected to chord power integrals in integral geometry, as will be
discussed later. This connection motivates the choice of the kernel
\(\min\{f(x),f(y)\}\), which is 1-homogeneous in \(f\) and compatible with
the layer-cake decomposition.

The following theorem treats the case $\alpha>n$. Here $\|\cdot\|_\infty$ denotes the essential supremum norm.

\begin{ThmB}
    Let $\alpha>n$ and $f$ be a non-negative, non-zero function in $L^1(\mathbb{R}^n)\cap L^\infty(\mathbb{R}^n)$. Then 
\begin{equation}\label{reversed chord-Sob}
\sigma_{n,\alpha} \|f\|_1^{\frac{n+\alpha}{n}}\|f\|_\infty^{-\frac{\alpha}{n}}\leq \int_{\mathbb{R}^n }\int_{\mathbb{R}^n}
\frac{\min\{f(x),f(y)\}}{|x-y|^{n-\alpha}}dxdy,
\end{equation}
where $\sigma_{n,\alpha}$ is given in \eqref{optimal constant}. Equality holds if and only if $f$ is a constant multiple of the characteristic function of a ball. 
\end{ThmB}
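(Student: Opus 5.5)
Theorem B is proved by reducing it, through the layer-cake decomposition, to a geometric inequality for level sets and then recombining the level sets. Write $f=\int_0^\infty \chi_{\{f>t\}}\,dt$ and use $\min\{f(x),f(y)\}=\int_0^\infty \chi_{\{f>t\}}(x)\chi_{\{f>t\}}(y)\,dt$. By Fubini,
\[
\int_{\rn}\int_{\rn}\frac{\min\{f(x),f(y)\}}{|x-y|^{n-\alpha}}\,dx\,dy=\int_0^{\|f\|_\infty} I_\alpha(E_t)\,dt,
\qquad I_\alpha(E)=\int_E\int_E |x-y|^{\alpha-n}\,dx\,dy ,
\]
with $E_t=\{f>t\}$. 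Here $I_\alpha(E)$ is, up to a constant, a chord power integral.

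\textbf{Step 1 (geometric inequality).} For $\alpha>n$ the kernel $|x-y|^{\alpha-n}$ is radially increasing. For sets of finite measure we therefore expect the reverse Riesz-type inequality $I_\alpha(E)\ge I_\alpha(E^*)$, where $E^*$ is the centered ball with $|E^*|=|E|$, and equality should hold only for balls up to null sets. To prove it, fix $x\in E$ and compare $\int_E|x-y|^{\alpha-n}dy$ with $\int_{B(x,r)}|x-y|^{\alpha-n}dy$, where $|B(x,r)|=|E|$. Mass of $E$ outside $B(x,r)$ sits at distance $\ge r$ and replaces an equal amount of missing mass inside at distance $<r$, so
\[
\int_E|x-y|^{\alpha-n}\,dy\ \ge\ \int_{B(x,r)}|x-y|^{\alpha-n}\,dy .
\]
This pointwise bound only gives $I_\alpha(E)\ge |E|\cdot c\,|E|^{\alpha/n}$, which is weaker than the ball value. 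So instead I would use the Riesz rearrangement inequality applied to the decreasing kernel $K=M-|x-y|^{\alpha-n}$, truncated on a large ball of radius $R$ so that it is integrable, and then let $R\to\infty$. Alternatively, one can use the Blaschke–Petkantschin/chord formula together with the known sharp chord power integral inequality for $\alpha>n$, which is attained only by balls. Scaling then gives $I_\alpha(E)\ge I_\alpha(B)\,|E|^{(n+\alpha)/n}/\omega_n^{(n+\alpha)/n}$, and $\sigma_{n,\alpha}$ is exactly $I_\alpha(B)\,\omega_n^{-(n+\alpha)/n}$. Checking this constant against \eqref{optimal constant} is a routine Beta-integral computation.

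\textbf{Step 2 (recombination).} Set $\phi(t)=|E_t|$ and $p=(n+\alpha)/n>1$. Step 1 gives
\[
\int_0^{\|f\|_\infty} I_\alpha(E_t)\,dt\ \ge\ \sigma_{n,\alpha}\int_0^{\|f\|_\infty}\phi(t)^p\,dt .
\]
Jensen's (or Hölder's) inequality on $[0,\|f\|_\infty]$ with normalized Lebesgue measure then gives
\[
\int_0^{\|f\|_\infty}\phi^p\,dt\ \ge\ \|f\|_\infty^{1-p}\Big(\int_0^{\|f\|_\infty}\phi\,dt\Big)^p=\|f\|_\infty^{-\alpha/n}\|f\|_1^{(n+\alpha)/n},
\]
which is exactly \eqref{reversed chord-Sob}.

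\textbf{Step 3 (equality).} Equality in Jensen's inequality forces $\phi$ to be constant a.e. on $(0,\|f\|_\infty)$. Then $f=\|f\|_\infty\chi_E$ a.e., and equality in Step 1 forces $E$ to be a ball.

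\textbf{Main obstacle.} I expect Step 1 to be the main difficulty: the reverse rearrangement inequality for the growing kernel, and especially its strict equality case. To handle it I would first try the truncation $K_R(z)=(R^{\alpha-n}-|z|^{\alpha-n})_+$ together with the identity
\[
I_\alpha(E)=R^{\alpha-n}|E|^2-\int_E\int_E K_R(x-y)\,dx\,dy
\]
for $R>\operatorname{diam}$. That identity holds only for bounded $E$, so general $E$ would need an approximation argument. The Riesz inequality with strictly decreasing $K_R$, using Burchard's equality characterization, then gives both the inequality and the equality cases.
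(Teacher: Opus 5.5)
Your architecture matches the paper's: a layer-cake decomposition of $\min\{f(x),f(y)\}$, then a reverse Riesz inequality for level sets under the increasing kernel $|x-y|^{\alpha-n}$ (Lemma~\ref{q>n}) to reach $\sigma_{n,\alpha}\int_0^{\|f\|_\infty}|\{f\ge t\}|^{\frac{n+\alpha}{n}}dt$ as in \eqref{reversed chord}, then Jensen on $[0,\|f\|_\infty]$, with equality coming from strict convexity plus Burchard. Your fallback via the chord isoperimetric inequality would not work. That inequality holds only for convex bodies, and the level sets of a general $f\in L^1\cap L^\infty$ are arbitrary measurable sets.

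\textbf{The gap.} It lies in the mechanism you settle on for Step~1. The identity $I_\alpha(E)=R^{\alpha-n}|E|^2-\int_E\int_E K_R(x-y)\,dx\,dy$ requires $E$ to be bounded, and you defer unbounded $E$ to an unspecified approximation. Approximating by $E\cap kB^n$ recovers the inequality in the limit, but not the equality case: strict inequality for every approximant can become equality in the limit. This matters in Step~3. There $f=\|f\|_\infty\chi_E$ with $E$ an arbitrary set of finite measure, possibly unbounded, so as written you cannot conclude that $E$ is a ball.

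\textbf{The fix.} This is what the paper does: decompose the kernel slice by slice instead of truncating globally. With $q=\alpha-n$ we have $|z|^{q}=\int_0^\infty\chi_{\rn\setminus r^{1/q}B^n}(z)\,dr$, so for every $E$ of finite measure
\[
I_\alpha(E)=\int_0^\infty\Big(|E|^2-\di\chi_E(x)\chi_{r^{1/q}B^n}(x-y)\chi_E(y)\,dx\,dy\Big)dr .
\]
Each bracket is finite and, by Riesz, at least the corresponding bracket for $E^\star$. If $I_\alpha(E)=I_\alpha(E^\star)<\infty$, the brackets agree for almost every $r$. For small $r$ the radii satisfy Burchard's condition $0<r^{1/q}<2a$, where $a$ is the radius of $E^\star$, so $E$ is a ball with no boundedness assumption.

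In your own notation, this amounts to using $\min\{|z|^{q},R^{q}\}=R^{q}-K_R(z)$, which is valid for all $z$ and every finite-measure $E$. The truncated deficit $\di\big(\chi_{E^\star}(x)\chi_{E^\star}(y)-\chi_E(x)\chi_E(y)\big)K_R(x-y)\,dx\,dy$ is non-decreasing in $R$ and tends to $I_\alpha(E)-I_\alpha(E^\star)$. Hence equality forces it to vanish for every $R$, and Burchard applies as above.
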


The chord Sobolev inequalities are closely related to isoperimetric inequalities for random lines in integral geometry, known as {\it chord isoperimetric inequalities}. They can be interpreted as functional analogues of chord isoperimetric inequalities.

%The chord Sobolev inequalities arise from isoperimetric inequalities for random lines in integral geometry, known as {\it chord isoperimetric inequalities}. More precisely, they can be interpreted as functional analogues of isoperimetric inequalities for chord power integrals, with $\alpha$ ranging over different intervals.

For $\alpha>-1$ and a convex body $K\subset \rn$ (a convex compact set with non-empty interior), the chord power integral of $K$ is defined by
\[I_{\alpha+1}(K)=\int_{\Aff}|K\cap l|^{\alpha+1}dl,\]
where $|K\cap l|$ denotes the length of the intersection. Here $\Aff$ denotes the affine Grassmannian of $1$-dimensional lines in $\rn$ and $dl$ is the standard Haar measure on it (see Section \ref{chord and RaK} for details). The chord power integral has attracted increasing attention in recent years, mainly in connection with Minkowski-type problems (see, e.g., \cite{Cai2, GXZ, HHLW, LXYZ2020, XYZZ, XZ}). For recent results on inequalities involving chord power integrals, we refer the reader to \cite{XC, XS, Xu, Zh3}.

The chord isoperimetric inequalities state that there exists $\tilde{\sigma}_{n,\alpha}=\frac{|\alpha|(\alpha+1)}{n\omega_n}\sigma_{n,\alpha}$ such that
\begin{align}
    I_{\alpha+1}(K) &\ge \tilde{\sigma}_{n,\alpha}|K|^{\frac{n+\alpha}{n}}, \quad \alpha\in(-1,0)\cup (n,\infty), \label{iso-chord-011}\\
    I_{\alpha+1}(K) &\le \tilde{\sigma}_{n,\alpha}|K|^{\frac{n+\alpha}{n}}, \quad \alpha\in(0,n), \label{iso-chord-1n+11}
\end{align}
which were proved by Blaschke in $\tr^2$, and proved by  Ren \cite{Ren1}, by Davy \cite{Davy}, and by Schneider \cite{Sch2} in $\rn$ for integer $\alpha$. For non-integer $\alpha>-1$, these inequalities were proved by Zhang \cite{Zh2}; see also \cite{Ren2}. 

When $\alpha\rightarrow -1^+$, inequality \eqref{iso-chord-011} reduces to the classical isoperimetric inequality, which follows from Cauchy's integral formula, as discussed in Section \ref{chord and RaK}. Federer and Fleming \cite{FF} and Ma${\rm z}^{\prime}$ya \cite{Ma} independently discovered the remarkable connection between the sharp Sobolev inequality and the classical isoperimetric inequality. Zhang \cite{Zh4} proved a strengthening of this correspondence by establishing the affine Sobolev inequality and an integral affine isoperimetric inequality for domains. We next demonstrate the connection between the chord Sobolev inequalities and the chord isoperimetric inequalities.

The following integral representations of the chord power integrals play a central role in the development of the framework:
\begin{align}
    &I_{\alpha+1}(K)=\frac{|\alpha|(\alpha+1)}{2n\omega_n}\int_{\rn}\int_{\rn}\frac{|\chi_K(x)-\chi_K(y)|}{|x-y|^{n-\alpha}}dxdy, & & \alpha\in(-1,0), \label{chord-integral formula-01}\\ 
    &I_{\alpha+1}(K)=\frac{\alpha(\alpha+1)}{n\omega_n}\int_{\rn}\int_{\rn}\frac{\chi_K(x)\chi_K(y)}{|x-y|^{n-\alpha}}dxdy, & & \alpha>0, \label{chord-integral formula>1}
\end{align}
where $\chi_K$ denotes the characteristic function of $K$, taking value 1 in $K$ and 0 outside. Formula \eqref{chord-integral formula>1} is classical and related to Riesz potentials (see \cite{Ren2, San}). Formula \eqref{chord-integral formula-01} was first introduced by Qin \cite{Qin} (see \cite[Theorem 7.2.7]{SW} for a generalization) and is called the fractional integral of $K$. These formulas illustrate the geometric–functional correspondence between the chord power integral and the integrals appearing in \eqref{frac-Sob} and \eqref{chord-Sob}.

Motivated by these formulas, we introduce the following extension to non-negative measurable functions:
\begin{equation}\label{chord f-integral formula}
    \begin{aligned}
       &I_{\alpha+1}(f)=\frac{|\alpha|(\alpha+1)}{2n\omega_n}\int_{\rn}\int_{\rn}\frac{|f(x)-f(y)|}{|x-y|^{n-\alpha}}dxdy, &  & ~~\alpha\in(-1,0), \\
       &I_{\alpha+1}(f)=\frac{\alpha(\alpha+1)}{n\omega_n}\int_{\rn}\int_{\rn}\frac{ \min\{f(x), f(y)\} }{|x-y|^{n-\alpha}}dxdy,&  & ~~\alpha>0.
    \end{aligned}
\end{equation}
Here, for $\alpha>0$, we use $\min\{f(x), f(y)\}$ rather than $f(x)f(y)$ in order to preserve the $1$-homogeneity present in \eqref{chord-integral formula-01}. If $f$ is a log-concave function, \eqref{chord f-integral formula} admits a unified formulation in terms of level sets: 
\begin{equation}\label{fcn chord-level set}
     I_{\alpha+1}(f)=\int_0^{\infty}I_{\alpha+1}(\{f\ge t\})dt.
\end{equation}
In Section \ref{chord f}, we show that the integral in \eqref{fcn chord-level set} is finite and well-defined for $\alpha\ge -1$, provided that $f$ is log-concave and integrable.
%%%%%%%%%%%%%%%%%%%%%%%%%%%%%%%%%%%%%%%%%%%%%%%%%%%%%%%

Using the representation \eqref{fcn chord-level set}, for a log-concave, integrable function $f$, the chord Sobolev inequalities take the form
%Using the representation \eqref{fcn chord-level set}, for a log-concave, integrable function $f$, the chord Sobolev inequalities can be rewritten as
    \begin{align}
    I_{\alpha+1}(f)&\ge \tilde{\sigma}_{n,\alpha}\|f\|_{\frac{n}{n+\alpha}}, \quad\alpha\in(-1,0), \label{-10} \\
    I_{\alpha+1}(f)&\le \tilde{\sigma}_{n,\alpha}\|f\|_{\frac{n}{n+\alpha}}, \quad\alpha\in(0,n)\nonumber,
    \end{align}
and
\begin{equation*}
    I_{\alpha+1}(f)\ge \tilde{\sigma}_{n,\alpha}\|f\|_1^{\frac{n+\alpha}{n}}\|f\|_{\infty}^{-\frac{\alpha}{n}}, \quad \alpha>n,
\end{equation*}
where $\tilde{\sigma}_{n,\alpha}$ appears in \eqref{iso-chord-011} and \eqref{iso-chord-1n+11}. Inequality \eqref{-10} converges to the classical  Sobolev inequality by the work of Bourgain, Brezis and Mironescu \cite{BBM}. For characteristic functions $f=\chi_K$ with $K\subset \rn$ a convex body, these inequalities reduce to the chord isoperimetric inequalities \eqref{iso-chord-011} and \eqref{iso-chord-1n+11}. Note that in the functional setting, the inequalities for $\alpha>n$ take a different form from those for $\alpha\in(-1,0)$, unlike the chord isoperimetric inequalities, which have the same form in both ranges. These distinctions are discussed in Section \ref{Proof of Theorem A}.

%In Sections \ref{entropy} and \ref{log Sob ineq}, we investigate the limiting cases of the chord Sobolev inequalities at the endpoints $\alpha=0$ and $\alpha=n$.

In Sections \ref{entropy} and \ref{log Sob ineq}, we study the limiting behavior of the chord Sobolev inequalities at the endpoints $\alpha=0$ and $\alpha=n$. In the geometric setting, the chord isoperimetric inequalities \eqref{iso-chord-011} and \eqref{iso-chord-1n+11} reduce to identities in the limits $\alpha\to 0$ and $\alpha\to n$, respectively. Xu \cite{Xu} and Zhang \cite{Zh3} captured the first-order behavior at these endpoints through chord entropy inequalities. 

At the analytic level, the chord Sobolev inequality also collapses to an identity as $\alpha\to 0^+$, under mild integrability assumptions. From the other side, a result of Ma${\rm z}^{\prime}$ya and Shaposhnikova \cite{MS} shows that the fractional Sobolev inequality \eqref{frac-Sob} likewise reduces to an identity as $\alpha\to 0^-$, provided that $f\in W^{\beta,1}(\rn)$ for some $\beta\in(0,1)$. The endpoint $\alpha=0$ therefore gives rise to a functional chord entropy inequality.

In Section \ref{entropy}, using the level-set definition
\eqref{fcn chord-level set}, we extend the chord entropy inequalities to
log-concave functions in \(L^1(\rn)\). In Section \ref{log Sob ineq}, we
further extend the entropy inequality at \(\alpha=0\) beyond the log-concave
 setting to a fractional Sobolev class, obtaining the
logarithmic Sobolev-type inequality in Theorem C. Logarithmic Sobolev-type
inequalities have been extensively studied; see, e.g.,
\cite{BP, Cai, Ca, Le, Ro}.

%In Section \ref{entropy}, we employ the definition \eqref{fcn chord-level set} to extend the chord entropy inequalities to log-concave functions $f\in L^1(\rn)$. In Section \ref{log Sob ineq}, we extend this entropy inequality beyond the log-concave settings to a fractional Sobolev class, yielding the logarithmic Sobolev--type inequality in Theorem C. Logarithmic Sobolev--type inequalities have been extensively studied; see, e.g., \cite{BP, Cai, Ca, Le, Ro}.

\begin{ThmC}
    Let $\beta\in (0,1)$ be fixed, and $f\in W^{\beta,1}(\rn)$ be a non-negative function such that $\|f\|_1=1$ and $f\log f\in L^1(\rn)$. Then
    \begin{equation}\label{log}
    \sigma_0-\omega_n\int_{\rn}f(x)\log f(x)dx\ge \di\frac{\min\{f(x), f(y)\}-f(x)e^{-|x-y|} }{|x-y|^n}dxdy ,
    \end{equation}
    where the optimal constant is given by $\sigma_0=\frac{d}{d\alpha}|_{\alpha=0}\alpha\sigma_{n,\alpha}-n\omega_n\Gamma^{\prime}(1)$. Equality holds if and only if $f$ is a constant multiple of the characteristic function of a ball.
\end{ThmC}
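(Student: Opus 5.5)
The inequality in Theorem C is the first-order expansion of the chord Sobolev inequality \eqref{chord-Sob} at $\alpha=0$. I would obtain it by differentiating at $\alpha=0^+$. Set
\[
F(\alpha)=\di\frac{\min\{f(x),f(y)\}}{|x-y|^{n-\alpha}}dxdy, \qquad G(\alpha)=\sigma_{n,\alpha}\|f\|_{\frac{n}{n+\alpha}} .
\]
Theorem A gives $G(\alpha)-F(\alpha)\ge 0$ for $\alpha\in(0,n)$. Both $F$ and $G$ blow up like $1/\alpha$ as $\alpha\to0^+$, so I would first subtract the singular parts. Multiplying by $\alpha$, the quantity $\alpha F(\alpha)$ tends to $n\omega_n\|f\|_1$: by Fubini and polar coordinates, the integral $\int_{|x-y|<1}\cdots$ has leading term $\alpha\int f(x)\int_{|z|<1}|z|^{\alpha-n}dz\,dx=n\omega_n\|f\|_1$. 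The near-diagonal difference $\min\{f(x),f(y)\}-f(x)$ is controlled using $f\in W^{\beta,1}$, in the spirit of Maz'ya--Shaposhnikova. Likewise $\alpha\sigma_{n,\alpha}\to n\omega_n$ from \eqref{optimal constant}, and $\|f\|_{\frac{n}{n+\alpha}}\to\|f\|_1=1$. So $\alpha(G-F)\to 0$, and the inequality $\alpha(G-F)\ge0$ near $0$ will yield a derivative inequality $\frac{d}{d\alpha}\big|_{0^+}\alpha(G-F)\ge 0$, provided the one-sided derivatives exist.

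Next I would compute the derivatives of the two pieces.

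\emph{The $G$ term.} Since $\|f\|_1=1$, differentiating the norm in $p=\frac{n}{n+\alpha}$ gives
\[
\frac{d}{d\alpha}\Big|_{0}\|f\|_{\frac{n}{n+\alpha}}=-\frac1n\int f\log f .
\]
Hence
\[
\frac{d}{d\alpha}\Big|_0\alpha G=\frac{d}{d\alpha}\Big|_{0}\alpha\sigma_{n,\alpha}-\omega_n\int f\log f .
\]
This uses $f\log f\in L^1$ together with a dominated convergence argument for $\int f^{p}$ with $p$ near $1$.

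\emph{The $F$ term.} Here I would write
\[
\alpha F(\alpha)=\alpha\di\frac{\min\{f(x),f(y)\}-f(x)e^{-|x-y|}}{|x-y|^{n-\alpha}}\,dxdy+\alpha\|f\|_1\int_{\rn}|z|^{\alpha-n}e^{-|z|}dz .
\]
The second term equals $n\omega_n\Gamma(\alpha+1)$, with derivative $n\omega_n\Gamma'(1)$ at $0$. The first integral, call it $H(\alpha)$, should converge as $\alpha\to0^+$ to the finite quantity on the right of \eqref{log}. Then $\frac{d}{d\alpha}\big|_0(\alpha H)=H(0)$. Collecting terms gives exactly
\[
\sigma_0-\omega_n\int f\log f\ge H(0).
\]
Rather than differentiating, I would run the argument as: divide $G-F\ge0$ by $1$, use $\alpha F=n\omega_n\Gamma(1+\alpha)+\alpha H(\alpha)$, and let $\alpha\to0^+$ in $(\alpha G-n\omega_n\Gamma(1+\alpha))/\alpha\ge H(\alpha)$.

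The main obstacle is showing that $H(\alpha)\to H(0)$ with $H(0)$ finite. I would split the integral into the regions $|x-y|<1$ and $|x-y|\ge1$.
\begin{itemize}
\item On $|x-y|\ge1$, the integrand is bounded by $2f(x)|x-y|^{-n}$, which is fine after accounting for the cancellation with $e^{-|x-y|}$. More precisely, $\min\{f(x),f(y)\}\le f(x)$, and the positive part of the integrand is bounded by $\min\{f(x),f(y)\}|z|^{-n}$. The integral $\di\frac{\min\{f(x),f(y)\}}{|x-y|^{n}}\chi_{\{|x-y|\ge1\}}$ is finite because, by layer cake, $\int_{\rn}\int_{|x-y|\ge1}\frac{\chi_{E}(x)\chi_{E}(y)}{|x-y|^{n}}\lesssim |E|\log(1+|E|)$. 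Its integral in $t$ is then controlled by $\int f\log f$ plus $\|f\|_1$; this is where $f\log f\in L^1$ enters.
\item On $|x-y|<1$, write $\min\{f(x),f(y)\}-f(x)=-(f(x)-f(y))_+$, so the integrand is bounded by $|f(x)-f(y)|/|x-y|^{n}\le |f(x)-f(y)|/|x-y|^{n+\beta}$ there, integrable since $f\in W^{\beta,1}$. The remaining piece $f(x)(1-e^{-|z|})|z|^{-n}$ is integrable near $0$.
\end{itemize}
Dominated/monotone convergence then gives $H(\alpha)\to H(0)$. The $\alpha$-derivative of $\alpha\sigma_{n,\alpha}$ exists by the explicit Gamma formula.

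For the equality case: if $f=c\chi_B$, equality holds since equality holds in Theorem A for every $\alpha$. For the converse, I would use a stability-free argument via rearrangement. The right side of \eqref{log} increases under symmetric decreasing rearrangement, by the Riesz-type property for $\min$ through layer cake, while the left side is invariant. For radial $f$, the layer-cake form reduces the problem to the set case, where the chord entropy inequality of Xu/Zhang for balls is strict unless there is a single level. I expect this step to need the most care.
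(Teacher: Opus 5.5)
Your first-order expansion of Theorem A is a genuinely different and more direct route to the inequality than the paper's. The paper instead passes through the chord entropy inequality for quasi-concave functions and the rearrangement inequality $\tilde{V}_{\log}(B^n,\rR_0 f^\star)\ge\tilde{V}_{\log}(B^n,\rR_0 f)$. For $f$ with compact support, or $f\in L^{p}(\rn)$ for some $p<1$, your argument is correct. The left side of $(\alpha G-n\omega_n\Gamma(1+\alpha))/\alpha\ge H(\alpha)$ tends to $\sigma_0-\omega_n\int f\log f$. Also $H(\alpha)\to H(0)$, by dominated convergence on $\{|x-y|<1\}$ and decreasing monotone convergence on $\{|x-y|\ge1\}$.

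\textbf{First gap: the hypotheses do not give $f\in L^{\frac{n}{n+\alpha}}(\rn)$ for any $\alpha>0$.} Take a smooth radially decreasing $f$ with $\|f\|_1=1$ and $f(x)\approx c|x|^{-n}(\log|x|)^{-3}$ for large $|x|$. It lies in $W^{1,1}(\rn)\subset W^{\beta,1}(\rn)$ and has $f\log f\in L^1(\rn)$. Yet $\|f\|_{\frac{n}{n+\alpha}}=\infty$ and $F(\alpha)=\sigma_{n,\alpha}\int_0^\infty|\{f\ge t\}|^{\frac{n+\alpha}{n}}dt=\infty$ for every $\alpha>0$. Then Theorem A reads $\infty\ge\infty$ and $H(\alpha)=+\infty$. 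Your monotone convergence on $\{|x-y|\ge1\}$ has no finite starting point, so the argument says nothing, although $H(0)$ is finite.

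You need a second step that your plan omits: prove the inequality for compactly supported $f$, then approximate by $f_j=\eta_j f$ (normalized). You must pass to the limit in $\|f_j\|_1$, in $\int f_j\log f_j$, and in the right-hand side. For the right-hand side, handle the near-diagonal part through $\mathcal{E}_u$ and the bound $|\eta_j(x)-\eta_j(x+ru)|\le rM/j$. Handle the far part by sandwiching between $f\chi_{jB^n}$ and $f$. This is the content of Lemma \ref{cuf-off}.

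\textbf{Second gap: the equality characterization.} A limit of strict inequalities need not be strict, so the equality cases of Theorem A cannot be inherited; you need an independent argument. The one you sketch misidentifies the key ingredient. For radial $f$ every superlevel set is a ball, and there the Xu--Zhang chord entropy inequality is an \emph{equality}, so it yields no strictness.

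If you carry out your layer-cake reduction, scaling and the equality case at indicators give $\di\big(\chi_B(x)\chi_B(y)-\chi_B(x)e^{-|x-y|}\big)|x-y|^{-n}dxdy=|B|(\sigma_0+\omega_n\log|B|)$ for a ball $B$. So for radial $f$ with $\|f\|_1=1$ the deficit in \eqref{log} is $\omega_n\big(-\int f\log f-\int_0^\infty|\{f\ge t\}|\log|\{f\ge t\}|\,dt\big)$. What you need is that this is nonnegative, with equality only when $f$ is a multiple of an indicator. That is a Shannon-type statement, not a chord-entropy one: compare $p(x,t)=\chi_{\{f\ge t\}}(x)/\|f\|_1$ with the product of its marginals, which is the inequality $q(f)\ge0$ in the proof of Theorem \ref{chord-entropy 1}.

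The monotonicity of the right side under rearrangement also needs more than your sketch. It does not follow from Riesz applied to $\min\{f(x),f(y)\}|x-y|^{-n}\chi_{\{|x-y|\ge1\}}$, because that kernel is not symmetric decreasing. You have to split into $\max\{0,|z|^{-n}-1\}$ acting on $|f(x)-f(y)|$ and $\min\{1,|z|^{-n}\}$ acting on $\min\{f(x),f(y)\}$, as in Lemmas \ref{P-S01} and \ref{P-S1infty}. Then use Burchard's theorem on the first piece to conclude that the superlevel sets are balls.
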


The normalization $\|f\|_1=1$ is not essential in Theorem C; a general form is given in Section \ref{log Sob ineq}. Moreover, the right-hand side of \eqref{log} admits a geometric interpretation via dual mixed volumes. For further discussion, see Sections \ref{rdm-fcn}--\ref{log Sob ineq}.

\section{Preliminaries}

We collect some basic notation and results on $L^p$ spaces, Schwarz symmetrals, star-shaped sets. The books of Schneider \cite{Sch} and Lieb and Loss \cite{LL} are good general references.

\subsection{\texorpdfstring{$\boldsymbol{L^p}$}{} spaces}\label{Lp space}\hspace{\fill}

Let $p>-1$ and $(X,\mathcal{B},\mu)$ be a measure space. For $p\neq 0$ and a measurable function $f:X\rightarrow \tr$, let
\[\|f\|_p=\Big(\int_{X}|f(x)|^p d\mu(x)\Big)^{1/p}\]
and
\[\|f\|_{\infty}=\inf\{C>0: |f(x)|\leq C~~\text{almost everywhere on}~X\}.\]
In particular, if $p\in(-1,0)$ and $\mu(\{x\in X: f(x)=0\})>0$, we set $\|f\|_p=\infty$. The space $L^p(X)$ is then defined as
\[L^p(X)=\{f:X\rightarrow \tr : f~\text{is measurable},~\|f\|_p<\infty\},\]
where $f = g$ in $L^p(X)$ means that $f$ and $g$ are equal almost everywhere.

Assume that $\mu(X)$ is finite. For non-negative and $\mu$-integrable function $f$, the $p$-th mean of $f$ is defined by
\[M_pf=\Big(\frac{1}{\mu(X)}\int_{X}f(x)^pd\mu(x)\Big)^{1/p}\]
for $p\neq 0$. By Jensen's inequality,
\[M_pf\leq M_qf\]
for $-1<p<q<0$ or $0<p<q<\infty$. By the monotonicity of $M_pf$ with respect to $p$, there is
\[\lim_{p\rightarrow 0}\log M_pf=\frac{1}{\mu(X)}\int_X\log f(x)d\mu(x).\]

Similarly, for $p\neq0$, set
\[N_pf=\Big(\int_Xf(x)^{p+1}d\mu(x)\Big)^{1/p}.\]
If we additionally assume that $\mu(X)=1$, one can show that
\begin{equation}\label{ent f}
    \lim_{p\rightarrow 0}\log N_pf=\int_Xf(x)\log f(x)d\mu(x),
\end{equation}
which is the entropy of $f$. For more information about $p$-th mean, we refer the reader to \cite{GZ}.

\subsection{Symmetrization}\label{sym}\hspace{\fill}

Let $E\subset\rn$ be a Borel set of finite measure. The Schwarz symmetral of $E$, denoted by $E^{\star}$, is the closed ball centered at the origin with the same volume as $E$.

For a non-negative measurable function $f:\rn\rightarrow \tr$, the superlevel set of $f$ at height $t>0$ is defined by $\{f\ge t\}=\{x\in \rn: f(x)\ge t\}$. We say that $f$ is non-zero if $\{f\neq 0\}$ has positive measure. If the superlevel sets of $f$ have finite measure, the layer cake formula gives
  \[f(x)=\int_0^{\infty}\chi_{\{f\ge t\}}(x)dt\]
for almost all $x\in\rn$. 

The Schwarz symmetral of $f$, denoted by $f^{\star}$, is defined as
\[f^{\star}(x)=\int_0^{\infty}\chi_{\{f\ge t\}^{\star}}(x)dt\]
for $x\in\rn$. Hence $f^{\star}$ is radially symmetric and the superlevel set $\{f^{\star}\ge t\}$ has the same volume as $\{f\ge t\}$ for every $t> 0$. 
%The Schwarz symmetral $f^{\star}$ is also called the symmetric decreasing rearrangement of $f$. If $|x|<|y|$ implies $f^{\star}(x)>f^{\star}(y)$, we say $f^{\star}$ is strictly symmetric decreasing.

A key technique in our proofs  is the Riesz rearrangement inequality (see, e.g.,  \cite[Theorem 3.7]{LL}) and the characterization of equality cases due to Burchard  \cite{Bu}.

\begin{thm}[Riesz's rearrangement inequality.]\label{rri}
    For measurable $f,g,k:\rn\rightarrow [0, \infty)$ with superlevel sets of finite measure.
    \[\di f(x)k(x-y)g(y)dxdy\leq \di f^{\star}(x)k^{\star}(x-y)g^{\star}(y)dxdy.\]
\end{thm}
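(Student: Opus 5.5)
By the layer cake formula, $f=\int_0^\infty\chi_{\{f\ge s\}}\,ds$, and similarly for $g$ and $k$. Moreover $\{f^\star\ge s\}=\{f\ge s\}^\star$ for every $s>0$, and likewise for $g$ and $k$. Inserting these representations and using Tonelli's theorem gives
\[
\di f(x)k(x-y)g(y)\,dxdy=\int_0^\infty\!\!\int_0^\infty\!\!\int_0^\infty I\big(\{f\ge s\},\{k\ge r\},\{g\ge t\}\big)\,ds\,dr\,dt,
\]
where for sets $A,C,B$ of finite measure we write
\[
I(A,C,B)=\di\chi_A(x)\chi_C(x-y)\chi_B(y)\,dxdy .
\]
The same representation holds with $f^\star,k^\star,g^\star$, the sets being replaced by their Schwarz symmetrals. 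So it suffices to prove
\[
I(A,C,B)\le I(A^\star,C^\star,B^\star)
\]
for Borel sets of finite measure. By inner/outer regularity and the $L^1$-continuity of $I$ in each argument (the other two being bounded indicators), we may further assume that $A$, $B$, $C$ are compact, and even finite unions of boxes.

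\textbf{One-dimensional case.} First treat $n=1$ with $A,B,C$ finite unions of intervals. Following the classical Riesz argument, we move the component intervals of each set toward the origin continuously, at speeds proportional to their positions, merging intervals when they touch. We check that along this flow the quantity $I$ is non-decreasing. This reduces to the case of three single intervals $A$, $B$, $C$. There $I$ is the measure of $\{(x,y): x\in A,\ y\in B,\ x-y\in C\}$, which is explicitly the area of a hexagon cut from a rectangle by a strip. This area is maximized when all three intervals are centered at $0$, a direct computation.

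\textbf{Steiner symmetrization in $\rn$.} Next, for a direction $u\in\sn$, Fubini's theorem applied along lines parallel to $u$ shows that
\[
I(A,C,B)\le I(S_uA,S_uC,S_uB),
\]
where $S_u$ denotes Steiner symmetrization. Indeed, writing $x=x'+su$, $y=y'+tu$, the inner integral over $(s,t)$ is a one-dimensional Riesz functional in the fibers $A_{x'}$, $B_{y'}$, $C_{x'-y'}$. Steiner symmetrization of $C$ symmetrizes the fiber $C_{x'-y'}$, so the one-dimensional case applies fiberwise.

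Finally, we choose a sequence of directions $u_1,u_2,\dots$ such that the iterated Steiner symmetrals of any compact set converge in Hausdorff distance, hence in symmetric-difference measure, to its Schwarz symmetral. We apply the same sequence simultaneously to $A$, $B$, $C$. Since $I$ is continuous under $L^1$ convergence of the indicators and is monotone along the sequence, passing to the limit yields $I(A,C,B)\le I(A^\star,C^\star,B^\star)$. Integrating over the levels then gives the theorem.

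\textbf{Expected obstacle.} The main difficulty is this convergence step: constructing a single universal sequence of Steiner symmetrizations that drives three arbitrary compact sets to their balls in measure. I would follow Lieb--Loss. A compactness argument in the Hausdorff metric shows that the set of limit points is stable under symmetrizations. One then shows that any limit point which does not decrease the functional $\int\chi_E(x)e^{-|x|^2}\,dx$-type distance to the ball must itself be a ball. A more delicate point is the bookkeeping of the one-dimensional sliding argument, which can alternatively be replaced by two-point rearrangements (polarizations) and the Brock--Solynin convergence theorem.
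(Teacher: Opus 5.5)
Your outline is correct and is the standard Lieb--Loss proof: layer-cake reduction to indicator functions, the one-dimensional sliding-interval argument, fiberwise Steiner symmetrization, and convergence of iterated symmetrals. The paper gives no proof of its own and simply cites \cite[Theorem 3.7]{LL}, so you are following exactly the argument it relies on, though your convergence step should be tightened: you only need a common sequence of symmetrizations along which the three given sets converge to their Schwarz symmetrals in symmetric-difference measure (Lieb--Loss get this in $L^p$ via nonexpansivity of rearrangement), not a universal sequence giving Hausdorff convergence for every compact set, which is more delicate than necessary.
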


\begin{thm}[Burchard]\label{rri-cha}
    Let $A, B$ and $C$ be sets of finite positive measure in $\rn$ and denote by $a, b$ and $c$ the radii of their Schwarz symmetrals $A^{\star}, B^{\star}$ and $C^{\star}$. For $|a-b|<c< a+b$, there is equality in
    \[\di\chi_A(x)\chi_B(x-y)\chi_C(y)dxdy\leq \di\chi_{A^{\star}}(x)\chi_{B^{\star}}(x-y)\chi_{C^{\star}}(y)dxdy\]
    if and only if, up to sets of measure zero,
    {\[A=x_0+a D,~B=x_1+b D,~C=x_2+c D,\]}
    where $D$ is a centered ellipsoid, and $x_0,x_1$ and $x_2=x_0+x_1$ are vectors in $\rn$.
\end{thm}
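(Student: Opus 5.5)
The statement just above is Burchard's equality theorem for the Riesz rearrangement inequality, which the paper quotes from \cite{Bu}. If I had to prove it, I would argue by \emph{Steiner symmetrization} and reduce everything to a one-dimensional strict statement. Throughout I assume the strict triangle condition $|a-b|<c<a+b$. It guarantees that no one of the three symmetrized balls contains the Minkowski sum or difference of the other two. In that case the functional
\[
J(A,B,C)=\di\chi_A(x)\chi_B(x-y)\chi_C(y)\,dxdy
\]
is ``strictly sensitive'' to rearrangement rather than saturated.

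\textbf{Step 1: dimension one.} I would first prove that if $A,B,C\subset\tr$ satisfy the triangle condition and $J(A,B,C)=J(A^{\star},B^{\star},C^{\star})$, then $A,B,C$ are intervals up to null sets, with centers satisfying $x_2=x_0+x_1$. The route I would try first writes $J$ via the layer structure of $\chi_A*\chi_B$. It then uses the one-dimensional Riesz--Sobolev inequality in its sharpened form, derived from the Brunn--Minkowski inequality $|A+B|\ge|A|+|B|$ together with its equality case (equality only for intervals, up to null sets). The triangle condition is what makes the relevant level sets of $\chi_A*\chi_B$ nontrivial, so that the strict Brunn--Minkowski inequality actually bites.

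\textbf{Step 2: slicing in all directions.} For a direction $u\in\sn$, Steiner symmetrization in $u$ does not decrease $J$. Since the Schwarz symmetrals are limits of iterated Steiner symmetrizations, equality for the Schwarz symmetrals forces equality under each single Steiner symmetrization. By Fubini, $J$ decomposes into one-dimensional integrals over parallel lines. Equality then forces equality in the one-dimensional problem for almost every triple of parallel lines where the triangle condition holds.

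Step 1 then gives two facts. First, for almost every line $l$ parallel to $u$, the sections $A\cap l$, $B\cap l$, $C\cap l$ are intervals. Second, their midpoints satisfy the additive relation. From the first fact, the sets coincide a.e.\ with convex sets (their Lebesgue-density versions). From the second, the midpoint map of each set is affine in the transverse variable, i.e.\ the midpoints of all chords parallel to $u$ lie on a hyperplane.

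\textbf{Step 3: identifying ellipsoids.} A convex body whose midpoints of parallel chords lie in a hyperplane for every direction is an ellipsoid; this is the classical Brunn characterization. The additive relation between midpoint hyperplanes of $A$, $B$ and $C$ forces the three ellipsoids to be homothetic, giving $A=x_0+aD$, $B=x_1+bD$, $C=x_2+cD$ with a common centered ellipsoid $D$ and $x_2=x_0+x_1$. The converse direction is immediate: an affine volume-preserving change of variables reduces it to balls, where equality is trivial.

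\textbf{Main obstacle.} I expect the main difficulty to be twofold. The first is the one-dimensional strict inequality of Step 1 for merely measurable sets. The second is the passage in Step 2 from ``almost every line'' to genuine convexity and to global affine midpoint structure. This needs care at the boundary of the region where the triangle condition holds for the sections, since there the one-dimensional equality gives no information. I would handle it by restricting to lines through a neighbourhood of the centers, where the section lengths are close to those of the symmetrals and the strict triangle condition persists. Once convexity is known on that region, it propagates to the rest of the set.
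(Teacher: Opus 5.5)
\textbf{The paper gives no proof of this statement;} it is quoted from Burchard \cite{Bu}, so your sketch has to stand on its own. Its architecture is the natural one: the one-dimensional case, then Steiner symmetrization with Fubini, then Brunn's midpoint characterization of ellipsoids. However, Step~2 has a genuine gap, exactly where you expected one, and your proposed repair does not close it.

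Write $x=(x',s)$ and $y=(y',t)$ with $x',y'\in u^\perp$. Equality gives one-dimensional equality for the triple $(A_{x'},B_{x'-y'},C_{y'})$ for a.e.\ $(x',y')$. But Step~1 applies only on the set $G_u$ of pairs where the section lengths $\ell_A(x')$, $\ell_B(x'-y')$, $\ell_C(y')$ satisfy the strict triangle condition. Off $G_u$, one-dimensional equality only yields inclusions. For instance, if $\ell_C(y')\ge\ell_A(x')+\ell_B(x'-y')$, it says merely that $C_{y'}$ essentially contains $A_{x'}-B_{x'-y'}$. That says nothing about $A_{x'}$ being an interval.

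So before you can claim that a.e.\ section is an interval, or that the midpoint map is affine, you must prove two things:
\begin{itemize}
\item a.e.\ $x'$ with $\ell_A(x')>0$ has a positive-measure set of partners $y'$ with $(x',y')\in G_u$, and likewise for $B$ and $C$;
\item $G_u$ is large and connected enough that the Pexider relation $m_A(x')=m_B(x'-y')+m_C(y')$, known only a.e.\ on $G_u$, forces a single global affine solution.
\end{itemize}
For balls, such partners exist by continuity of the section lengths along segments. For an arbitrary measurable set, $\ell_A,\ell_B,\ell_C$ have no continuity, and short sections of $A$ could a priori never be matched.

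Your repair presupposes what is to be proved. A priori $A$ has no distinguished center, and ``section lengths near the center are close to those of $A^{\star}$'' is a consequence of the theorem, not an input. Convexity on a sub-region also does not propagate, because the remaining lines are exactly those on which the one-dimensional theorem is silent. As it stands, the argument gives ellipsoidal structure only on an unidentified part of the sets. The density-point convexity step and Brunn's theorem inherit the same gap.

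\textbf{Two smaller points.} First, in Step~1 the input you need is not $|E+F|\ge|E|+|F|$. It is the level-set bound $|\{\chi_E*\chi_F>s\}|\ge|E|+|F|-2s$ for $0<s<\min\{|E|,|F|\}$, of which Brunn--Minkowski is only the case $s=0$; it must be proved or cited separately. With it the argument runs as follows:
\begin{itemize}
\item Write $J(A,B,C)=\int_A\chi_B*\chi_C$.
\item The strict triangle condition puts the level $\tau$ that cuts out $A^{\star}$ from $\chi_{B^{\star}}*\chi_{C^{\star}}$ strictly inside $(0,\min\{|B|,|C|\})$.
\item Equality forces equality in the level-set bound for all $s\in(0,\tau)$.
\item Letting $s\downarrow0$ gives $|\{\chi_B*\chi_C>0\}|=|B|+|C|$.
\item Only here does the Brunn--Minkowski equality case enter: it makes $B$ and $C$ intervals, and then $A$ is an interval as a superlevel set.
\end{itemize}

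Second, for the functional as written, the value is unchanged under $(A,B,C)\mapsto(A+v,\,B+v-w,\,C+w)$. So the correct center relation is $x_0=x_1+x_2$, not $x_2=x_0+x_1$. The printed statement has the indices transposed; this is harmless in the paper's applications, where $B$ is a centered ball and $A=C$. Your ``immediate'' converse is therefore false as stated. Balls with $x_0=0$ and $x_1=x_2=v\neq0$ satisfy $x_2=x_0+x_1$, yet under the strict triangle condition they give a strictly smaller value.
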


\subsection{Star-shaped sets and dual mixed volumes.} \label{dmv}\hspace{\fill}

We say that $L\subset \rn$ is a star-shaped set (with respect to the origin) if for every $x\in L$ and $t\in[0,1]$, we have $tx\in L$. For a star-shaped set $L$, the radial function $\rho_L:\rn\setminus\{0\}\rightarrow [0,\infty]$ is defined by
\[\rho_{L}(x)=\sup\{\lambda\ge 0: \lambda x\in L\}.\]
A star-shaped set $L$ is said to be a star body if $\rho_{L}$ is continuous and strictly positive on $\rn\setminus\{0\}$. We say that two star-shaped sets are dilates if  $\rho_K(u)=c\rho_L(u)$ for some $c>0$ and almost all $u\in\sn$.

For a star-shaped set $L$ with measurable radial function, the $n$-dimensional volume, or the $n$-dimensional Lebesgue measure of $L$ is given by
\[|L|=\frac{1}{n}\int_{\sn}\rho_{L}(u)^ndu,\]
where $du$ is the spherical Lebesgue measure. The dual mixed volume for star bodies $K$ and $L$ introduced by Lutwak  \cite{Lut1} is given by
\[\tilde{V}_{\alpha}(K,L)=\frac{1}{n}\int_{\sn}\rho_{K}(u)^{n-\alpha}\rho_L(u)^{\alpha}du,\]
for $\alpha\in\tr\setminus\{0,n\}$. When $L=B^n$, the quantity $\tilde{V}_\alpha(K, B^n)$ is also known as the dual volume of $K$.  In particular, 
\[\tilde{V}_{\log}(K,L)=\frac{1}{n|K|}\int_{\sn}\rho_K(u)^n\log\Big(\frac{\rho_L(u)}{\rho_K(u)}\Big)du,\]
and this quantity arises as the limit 
\[\tilde{V}_{\log}(K,L)=\lim_{\alpha\rightarrow 0}\frac{1}{\alpha}\log\Big(\frac{\tilde{V}_{\alpha}(K,L)}{|K|}\Big).\]
For simplicity, we formulate the above definitions for star bodies. 
For general star-shaped sets, the definitions extend by restricting integration to directions where the radial functions are positive.

\section{Chord Sobolev inequalities}\label{Proof of Theorem A}

In this section, we focus on the proof of Theorem A and Theorem B. Before presenting the proof, we first establish a technical lemma, which is based on the Minkowski inequality (see, for example, \cite[Theorem 2.4]{LL}). Let $F$ be a non-negative, measurable function on $(X\times Y, \mu\times\nu)$ and $p\ge 1$. The Minkowski inequality states that
\begin{equation}\label{Mink-ineq}
    \int_{Y}\Big(\int_{X}F(x,y)^pd\mu(x)\Big)^{1/p}d\nu(y) \ge \Big(\int_{X}\Big(\int_{Y}F(x, y)d\nu(y)\Big)^{p}d\mu(x)\Big)^{1/p}.
\end{equation}

Equality and finiteness in \eqref{Mink-ineq} for $p\in(1,\infty)$ imply the existence of measurable functions $\varphi:X\rightarrow [0,\infty)$ and $\psi:Y\rightarrow [0,\infty)$ such that $F(x,y)=\varphi(x)\psi(y)$ almost everywhere.

We first consider the case $\alpha \in (0,n)$. The following lemma is a key step in the proof of Theorem A. Related arguments for the cases $\alpha \in (-1,0)$ and $\alpha = -1$ were established by Ludwig \cite{Lud1} and Zhang \cite{Zh4}, respectively; see also \cite{HL1} for further discussion.

\begin{lem}\label{m1}
    For $\alpha\in (0,n)$ and non-negative $f\in L^{\frac{n}{n+\alpha}}(\rn)$, 
    \[\|f\|_{\frac{n}{n+\alpha}}\ge \int_0^{\infty}|\{f\ge t\}|^{\frac{n+\alpha}{n}}dt.\]
    Equality holds if and only if $f=c\chi_E$ for some measurable set $E$ of finite measure and $c>0$ is a constant.
\end{lem}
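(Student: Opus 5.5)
The plan is to reduce the statement to a one-dimensional inequality for the distribution function and prove that directly. The Minkowski inequality \eqref{Mink-ineq} in the form stated needs an exponent $\ge 1$, whereas here $p=\frac{n}{n+\alpha}\in(\tfrac12,1)$. So instead of invoking \eqref{Mink-ineq}, I will use a monotonicity argument of Hardy type.

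Set $\varphi(t)=|\{f\ge t\}|$, which is nonincreasing in $t>0$. By Chebyshev's inequality, $\varphi(t)\le t^{-p}\|f\|_p^p<\infty$ for every $t>0$. By the layer cake formula,
\[\|f\|_p^p=\int_0^\infty p\,s^{p-1}\varphi(s)\,ds .\]
It therefore suffices to show
\[\Big(\int_0^\infty \varphi(t)^{1/p}dt\Big)^p\le \int_0^\infty p\,s^{p-1}\varphi(s)\,ds .\]

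\textbf{Main inequality.} Define $G(s)=\int_0^s\varphi(t)^{1/p}dt$.
\begin{itemize}
\item Since $\varphi$ is nonincreasing, $G(s)\ge s\,\varphi(s)^{1/p}$.
\item We may assume $f\neq0$. Then $G(s)>0$ for all $s>0$.
\item Note that $G(s)\le s\,\varphi(0^+)^{1/p}$, which may be infinite. To avoid issues near $0$, I will first work on $[\varepsilon,S]$, or truncate $f$, and then let $\varepsilon\to0$, $S\to\infty$ by monotone convergence.
\item $G$ is locally absolutely continuous wherever it is finite, and so is $G^p$. For a.e. $s$,
\[\frac{d}{ds}G(s)^p=p\,G(s)^{p-1}\varphi(s)^{1/p}\le p\,\big(s\varphi(s)^{1/p}\big)^{p-1}\varphi(s)^{1/p}=p\,s^{p-1}\varphi(s).\]
The inequality uses $p-1<0$ together with $G(s)\ge s\varphi(s)^{1/p}$. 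At points where $\varphi(s)=0$ both sides vanish.
\item Integrating from $0$ to $\infty$, and using $G(0)=0$ and $G(\infty)=\int_0^\infty\varphi^{1/p}$, gives the claim. This is exactly $\int_0^\infty|\{f\ge t\}|^{\frac{n+\alpha}{n}}dt\le\|f\|_{\frac{n}{n+\alpha}}$.
\end{itemize}

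A finiteness subtlety needs care. If $\varphi(0^+)=\infty$, then $G$ might be infinite. However, the integrated inequality on $[\varepsilon,S]$ bounds $G(S)^p-G(\varepsilon)^p$, and a direct check shows $G(\varepsilon)$ is finite whenever the right-hand side is. Concretely, $\varphi(t)^{1/p}\le t^{-1}\|f\|_p$ does not by itself give integrability near $0$. So I will instead start from truncations $f\chi_{\{f\ge\varepsilon\}}$, for which $\varphi$ is bounded, and pass to the limit.

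\textbf{Equality.} I expect this to be the main obstacle. If equality holds and both sides are finite, the a.e. derivative inequality must be an equality at a.e. $s$ with $\varphi(s)>0$. This means $G(s)=s\varphi(s)^{1/p}$, i.e. $\int_0^s\big(\varphi(t)^{1/p}-\varphi(s)^{1/p}\big)dt=0$. Monotonicity then forces $\varphi(t)=\varphi(s)$ for a.e. $t<s$. Hence $\varphi$ is a.e. constant, say equal to $|E|$, on $(0,c)$ and zero on $(c,\infty)$, where $c=\sup\{s:\varphi(s)>0\}$. Since the sets $\{f\ge t\}$ decrease in $t$ with constant finite measure on $(0,c)$, they coincide with $E=\{f\ge c\}$ up to null sets, and are empty a.e. above $c$. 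Therefore $f=c\chi_E$ a.e. The converse is immediate: for $f=c\chi_E$ both sides equal $c|E|^{1/p}$.
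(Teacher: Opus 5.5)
Your proof is correct, but it takes a different route from the paper. Your stated reason for avoiding \eqref{Mink-ineq} is a misreading. The paper applies Minkowski's integral inequality with exponent $q=\frac{n+\alpha}{n}>1$, the reciprocal of your $p$, to $F(x,t)=\chi_{\{f\ge t\}}(x)$ on $\rn\times(0,\infty)$. Since $\chi_{\{f\ge t\}}(x)^q=\chi_{\{f\ge t\}}(x)$, the inner quantity $\big(\int_0^\infty F(x,t)^q\,dt\big)^{1/q}$ equals $f(x)^{\frac{n}{n+\alpha}}$. So \eqref{Mink-ineq} gives $\int_{\rn}f^{\frac{n}{n+\alpha}}dx\ge\big(\int_0^\infty|\{f\ge t\}|^{\frac{n+\alpha}{n}}dt\big)^{\frac{n}{n+\alpha}}$ in one line. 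Equality is handled by the product characterization $F(x,t)=a(x)b(t)$, which forces the superlevel sets to be independent of $t$.

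You instead use the layer-cake formula to reduce to a one-dimensional inequality for the nonincreasing distribution function. This is essentially the Lorentz embedding $L^{p}=L^{p,p}\subset L^{p,1}$ for $p<1$, with constant one. You prove it by the Hardy-type comparison $(G^p)'\le p\,s^{p-1}\varphi$, which comes from $G(s)\ge s\varphi(s)^{1/p}$ and $p-1<0$. Your route needs bookkeeping the paper avoids: absolute continuity of $G^p$, and possible non-integrability of $\varphi^{1/p}$ near $t=0$. In return it is self-contained, and the equality analysis is explicit rather than resting on the cited equality case of Minkowski's inequality.

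Three small points to tidy.
\begin{itemize}
\item Your sentence asserting that $G(\varepsilon)$ is finite whenever the right-hand side is, is circular as written. The truncation you then propose does fix it. For $f\chi_{\{f\ge\varepsilon\}}$ the distribution function is bounded by $\varphi(\varepsilon)\le\varepsilon^{-p}\|f\|_p^p$, so $G_\varepsilon^p$ is absolutely continuous on $[0,S]$. This gives $\int_\varepsilon^\infty\varphi^{1/p}\,dt\le\|f\|_p$, and then you let $\varepsilon\to0$.
\item In the equality case, run the derivative comparison on $f$ itself rather than on the truncations. This is legitimate once $\int_0^\infty\varphi^{1/p}\,dt<\infty$ is known.
\item Note explicitly that $c<\infty$, because $\varphi(s)\le s^{-p}\|f\|_p^p\to0$.
\end{itemize}
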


\begin{proof}
    By the Minkowski inequality \eqref{Mink-ineq},
    \begin{equation}\label{Mink-int}
        \begin{aligned}
            \int_{\rn}f(x)^{\frac{n}{n+\alpha}}dx&= \int_{\rn}\Big(\int_{0}^{\infty}\chi_{\{f\ge t\}}(x)^{\frac{n+\alpha}{n}}dt\Big)^{\frac{n}{n+\alpha}}dx\\
            &\ge \Big(\int_0^{\infty}\Big(\int_{\rn}\chi_{\{f\ge t\}}(x)dx\Big)^{\frac{n+\alpha}{n}}dt\Big)^{\frac{n}{n+\alpha}}\\&=\Big(\int_0^{\infty}|\{f\ge t\}|^{\frac{n+\alpha}{n}}dt\Big)^{\frac{n}{n+\alpha}},
        \end{aligned}
    \end{equation}
    that is
    \[\|f\|_{\frac{n}{n+\alpha}}\ge \int_0^{\infty}|\{f\ge t\}|^{\frac{n+\alpha}{n}}dt.\]

    It is clear that equality is attained when $f=c\chi_E$. Conversely, if equality holds in \eqref{Mink-int}, there are non-negative measurable functions $\varphi$ and $\psi$ such that
    \[\chi_{\{f\ge t\}}(x)=\varphi(x)\psi(t).\]
    This implies that
    \[\{(x,t)\in\rn\times\tr: f(x)\ge t\}=\{x\in\rn: \varphi(x) > 0\}\times\{t\in\tr: \psi(t)> 0\}.\]
    Therefore, for every fixed $t>0$ such that $t\leq f(x)$ for some $x$, the superlevel set $\{f\ge t\}$ is a set independent of $t$, which implies that  $f=c\chi_E$ for some measurable set $E$ of finite measure and $c>0$ is a constant. 
\end{proof}

We are now ready to prove Theorem A. 

\begin{proof}[Proof of Theorem A]
    Note that
    \[\min\{f(x),f(y)\}=\int_0^{\infty}\chi_{\{f\ge t\}}(x)\chi_{\{f\ge t\}}(y)dt,\]
    and 
    \[|z|^{\alpha-n}=\int_0^{\infty}\chi_{r^{-\frac{1}{n-\alpha}}B^n }(z)dr,\qquad z\in\rn\backslash\{o\} .\]

    Then we have
    \begin{equation}\label{rep-A}
        \begin{aligned}
            \di\frac{\min\{f(x), f(y)\} }{|x-y|^{n-\alpha}}dxdy
            &=\int_0^{\infty}\int_{\rn}\int_{\rn}\frac{\chi_{\{f\ge t\}}(x)\chi_{\{f\ge t\}}(y)}{|x-y|^{n-\alpha}}dxdydt\\
            &=\int_0^{\infty}\int_0^{\infty}\di\chi_{\{f\ge t\} }(x)\chi_{r^{\frac{1}{\alpha-n}}B^n}(x-y)\chi_{\{f\ge t\}}(y)dxdydrdt.
        \end{aligned}
    \end{equation}

    The Riesz rearrangement inequality, Theorem \ref{rri-cha}, yields that
    \begin{equation}\label{rri-A}
    \begin{aligned}
        \di\chi_{\{f\ge t\} }(x)\chi_{r^\frac{1}{\alpha-n}B^n}(x-y)&\chi_{\{f\ge t\} }(y)dxdy\\
        &\leq \di\chi_{\{f\ge t\}^\star}(x)\chi_{r^\frac{1}{\alpha-n}B^n}(x-y)\chi_{\{f\ge t\}^\star}(y)dxdy.
    \end{aligned}
    \end{equation}
    Together with \eqref{rep-A}, we have
    \begin{equation}\label{rri-step-0n}
        \begin{aligned}
            \di\frac{\min\{f(x), f(y)\} }{|x-y|^{n-\alpha}}dxdy
            &\leq\int_0^\infty\di \frac{\chi_{\{f\ge t\}^\star }(x)\chi_{\{f\ge t\}\star }(y) }{|x-y|^{n-\alpha}}dxdydt\\&=\sigma_{n,\alpha}\int_0^{\infty}|\{f\ge t\}|^\frac{n+\alpha}{n}dt,
        \end{aligned}
    \end{equation}
    where the last equality follows from \eqref{chord-integral formula>1} applied to balls and 
$|\{f\ge t\}^\star|=|\{f\ge t\}|$. Then Lemma \ref{m1} implies that
    \begin{equation}\label{main1-step}
        \sigma_{n,\alpha}\|f\|_{\frac{n}{n+\alpha}}\ge \int_{\rn}\int_{\rn}\frac{\min\{f(x),f(y)\}}{|x-y|^{n-\alpha}}dxdy.
    \end{equation}

    When equality holds, it follows from Lemma \ref{m1} that $f=c\chi_E$ for some measurable set $E\subset \rn$. Moreover, equality must also occur in the rearrangement inequality \eqref{rri-A}, that is
    \[\di \chi_E(x)\chi_{r^{\frac{1}{\alpha-n}}B^n}(x-y)\chi_E(y)dxdy=\di \chi_{E^\star}(x)\chi_{r^{\frac{1}{\alpha-n}}B^n}(x-y)\chi_{E^\star}(y)dxdy\]
    for almost all $r>0$. The assumptions in Theorem \ref{rri-cha} are fulfilled when $r>0$ is sufficiently large. Consequently, $E$ is a ball, which concludes the proof.
\end{proof}

In the setting of $\alpha>n$, Theorem A does not admit a straightforward reversal, which will be implied by the following result and Lemma \ref{Mink-ineq}.

\begin{lem}\label{q>n}
    Let $q>0$ and  $f:\rn\rightarrow [0,\infty)$ be an integrable function such that
    \[\di\min\{f(x), f(y)\}|x-y|^q dxdy<\infty.\]
    Then
    \[\di\min\{f(x), f(y)\}|x-y|^qdxdy \geq \di\min\{f^{\star}(x), f^{\star}(y)\}|x-y|^qdxdy.\]
    Equality holds if and only if the superlevel sets $\{f\ge t\}$ are balls for almost all $t>0$, up to null sets.
\end{lem}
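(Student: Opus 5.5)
We argue level by level, using the layer-cake representation from the proof of Theorem A. Write $E_t=\{f\ge t\}$; since $f$ is integrable, each $E_t$ has finite measure for $t>0$. As before,
\[\di\min\{f(x),f(y)\}|x-y|^qdxdy=\int_0^\infty\di\chi_{E_t}(x)\chi_{E_t}(y)|x-y|^qdxdydt,\]
and the same identity holds for $f^\star$ with $E_t^\star$, because $\{f^\star\ge t\}=E_t^\star$ up to null sets. The claim therefore reduces to the set inequality
\[J(E):=\di\chi_E(x)\chi_E(y)|x-y|^qdxdy\ \ge\ J(E^\star)\]
for every set $E$ of finite measure with $J(E)<\infty$. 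The finiteness hypothesis guarantees $J(E_t)<\infty$ for almost all $t$.

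To prove the set inequality, we turn the increasing kernel into a decreasing one by complementing. Fix $R>0$ and write $|z|^q=R^q-(R^q-|z|^q)$. On $\{|z|<R\}$ this gives
\[|z|^q=R^q-\int_0^{R^q}\chi_{\{|z|<(R^q-s)^{1/q}\}}\,ds,\]
so that $|z|^q\ge R^q-\int_0^{R^q}\chi_{(R^q-s)^{1/q}B^n}(z)\,ds$ for all $z$, since for $|z|\ge R$ the integral vanishes and the right side is $R^q\le|z|^q$. Truncating in this way is awkward, so we use a cleaner route:
\[|z|^q=\int_0^\infty\big(1-\chi_{s^{1/q}B^n}(z)\big)\,ds .\]
Then
\[J(E)=\int_0^\infty\Big(|E|^2-\di\chi_E(x)\chi_{s^{1/q}B^n}(x-y)\chi_E(y)dxdy\Big)ds.\]
The integrand is nonnegative. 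By Theorem \ref{rri}, the subtracted triple integral does not decrease when $E$ is replaced by $E^\star$, while $|E|^2=|E^\star|^2$ is unchanged. Hence the integrand for $E$ dominates the integrand for $E^\star$ pointwise in $s$, and integrating in $s$ gives $J(E)\ge J(E^\star)$. Tonelli's theorem applies because everything is nonnegative, and no subtraction of infinities occurs: we are comparing nonnegative integrands in $s$, and $J(E^\star)$ is finite, being the integral over a ball.

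For the equality case, if equality holds in the lemma, then $J(E_t)=J(E_t^\star)$ for almost all $t$. For such $t$, the nonnegative integrands in $s$ agree for almost every $s$, so equality holds in Riesz's inequality for $\chi_{E_t},\chi_{s^{1/q}B^n},\chi_{E_t}$. Let $a$ be the radius of $E_t^\star$. Choosing $s$ with $0<s^{1/q}<2a$ puts us in the range $|a-a|<c<a+a$ of Theorem \ref{rri-cha}, with $B=s^{1/q}B^n$. That theorem forces $s^{1/q}B^n=x_1+cD$, so $D$ is a ball, and hence $E_t=x_0+aD$ is a ball up to null sets. Conversely, if almost every $E_t$ is a ball, then $J(E_t)=J(E_t^\star)$ by translation invariance, and integrating in $t$ gives equality. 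The main point to check carefully is the measure-theoretic bookkeeping: that $\{f^\star\ge t\}=E_t^\star$ for almost every $t$, and that the finiteness hypothesis is exactly what makes the $t$-integrals meaningful. The core comparison itself is immediate from Theorem \ref{rri} once the kernel is written as $1-\chi_{s^{1/q}B^n}$.
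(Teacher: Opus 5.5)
Your proof is correct and is essentially the paper's argument: the layer-cake decomposition of $\min\{f(x),f(y)\}$, the representation $|z|^q=\int_0^\infty\bigl(1-\chi_{s^{1/q}B^n}(z)\bigr)\,ds$, Riesz's inequality at each level $t$ and radius $s$, and Burchard's theorem with $0<s^{1/q}<2a$ for the equality case. You compare the nonnegative $s$-integrands $|E|^2-\int\int\chi_E(x)\chi_{s^{1/q}B^n}(x-y)\chi_E(y)\,dx\,dy$ pointwise, which is slightly more careful than the paper's formal splitting of $1-\chi$ into two separately divergent integrals; the abandoned truncation paragraph can simply be deleted.
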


\begin{proof}
    For $z\in\mathbb{R}^n\setminus\{o\}$, 
\begin{align*}
    |z|^q=\int_0^\infty \chi_{\mathbb{R}^n\setminus r^{\frac{1}{q}}B^n}(z)dr.
\end{align*}
Using  Fubini's theorem, we obtain
\begin{equation}\label{cal-adi}
\begin{aligned}
\int_{\mathbb{R}^n}  \int_{\mathbb{R}^n} \min\{f(x),~&f(y)\}|x-y|^qdxdy\\
&= \int_0^\infty  \int_{\rn}  \int_{\rn} \chi_{\{f\ge t\}}(x)|x-y|^q \chi_{\{f\ge t\}}(y)dxdydt\\
 &= \int_0^\infty  \int_0^\infty \int_{\rn}  \int_{\rn}\chi_{\{f\ge t\}}(x)\chi_{\mathbb{R}^n\setminus r^{\frac{1}{q}}B^n}(x-y) \chi_{\{f\ge t\}}(y)dxdydrdt\\
 &=\int_0^\infty  \int_0^\infty \int_{\rn}  \int_{\rn}\chi_{\{f\ge t\}}(x)(1-\chi_{r^{\frac{1}{q}}B^n}(x-y)) \chi_{\{f\ge t\}}(y)dxdydrdt.
\end{aligned}
\end{equation}

The Riesz rearrangement inequality, Theorem \ref{rri-cha}, implies that
\begin{equation}\label{rri-chi-step}
    \begin{aligned}
    \di \chi_{\{f\ge t\}}(x)\chi_{r^{\frac{1}{q}}B^n}(x-y)\chi&_{\{f\ge t\}}(y)dxdy\\
    &\le\di \chi_{\{f\ge t\}^{\star}}(x)\chi_{r^{\frac{1}{q}}B^{n}}(x-y)\chi_{\{f\ge t\}^{\star}}(y)dxdy.
\end{aligned}
\end{equation}
Note that  \(\int_{\rn}\chi_{\{f\ge t\}}(x)dx<\infty\) for almost all $t>0$, as $f\in L^1(\rn)$. Combining \eqref{rri-chi-step} with the calculation in \eqref{cal-adi}, we have
\begin{align}\label{ri}
 \int_{\mathbb{R}^n}  \int_{\mathbb{R}^n} \min\{f(x),f(y)\}|x-y|^qdxdy \geq \int_{\mathbb{R}^n}  \int_{\mathbb{R}^n} \min\{f^\star(x),f^\star(y)\}|x-y|^qdxdy.   
\end{align}

Moreover, if equality holds in \eqref{ri}, for almost all $t>0$, equality holds in \eqref{rri-chi-step} for almost all $r>0$. Fix such a $t>0$ with $|\{f\ge t\}|>0$, we choose $r>0$ sufficiently small so that the assumptions of Theorem 2.2 are satisfied. Therefore $\{f\ge t\}$ is a ball, up to a null set.
\end{proof}

With the previous lemma established, we now consider the case $\alpha > n$, for which we have
\begin{equation}\label{reversed chord}
    \begin{aligned}
        \di\frac{\min\{f(x), f(y)\}}{|x-y|^{n-\alpha}}dxdy&\ge \di\frac{\min\{f^{\star}(x), f^{\star}(y)\}}{|x-y|^{n-\alpha}}dxdy\\
        &=\int_0^{\infty}\di \frac{\chi_{\{f^{\star}\ge t\}}(x)\chi_{\{f^{\star}\ge t\}}(y)}{|x-y|^{n-\alpha}}dxdydt\\
        &=\sigma_{n,\alpha}\int_0^{\infty}|\{f^{\star}\ge t\}|^{\frac{n+\alpha}{n}}dt\\
        &=\sigma_{n,\alpha}\int_0^{\infty}|\{f\ge t\}|^{\frac{n+\alpha}{n}}dt.
    \end{aligned}
\end{equation}

However, by Lemma \ref{m1} and the fact that $\frac{n+\alpha}{n}>1$, we only obtain
\[
\sigma_{n,\alpha}\int_0^{\infty}|\{f\ge t\}|^{\frac{n+\alpha}{n}}\,dt
\le \sigma_{n,\alpha}\|f\|_{\frac{n}{n+\alpha}},
\]
which shows that a direct reverse of \eqref{main1-step} is not available for $\alpha>n$. For this reason, we restrict our attention to functions $f\in L^1(\rn)\cap L^{\infty}(\rn)$ in the case $\alpha>n$.

\begin{proof}[Proof of Theorem B]
If the right-hand side of \eqref{reversed chord-Sob} is infinite, the inequality holds trivially. We may therefore assume that it is finite.
    As we calculated in \eqref{reversed chord},
    \[\di\frac{\min\{f(x), f(y)\}}{|x-y|^{n-\alpha}}dxdy\ge \sigma_{n,\alpha}\int_0^{\|f\|_{\infty}}|\{f\ge t\}|^{\frac{n+\alpha}{n}}dt.\]
    By Jensen's inequality, we get
    \begin{equation}\label{Jensen}
        \frac{1}{\|f\|_{\infty}}\int_0^{\|f\|_{\infty}}|\{f\ge t\}|^{\frac{n+\alpha}{n}}dt\ge \Big(\frac{1}{\|f\|_{\infty}}\int_0^{\|f\|_{\infty}}|\{f\ge t\}|dt\Big)^{\frac{n+\alpha}{n}}=\|f\|_{\infty}^{-\frac{n+\alpha}{n}}\|f\|_1^{\frac{n+\alpha}{n}},
    \end{equation}
    and thus
    \begin{equation}\label{ineq}
        \sigma_{n,\alpha}\|f\|_1^{\frac{n+\alpha}{n}}\|f\|_{\infty}^{-\frac{\alpha}{n}}\leq \di\frac{\min\{f(x), f(y)\}}{|x-y|^{n-\alpha}}dxdy.
    \end{equation}

    If equality holds in \eqref{ineq}, then equality must occur in the application of Jensen's inequality in \eqref{Jensen}. This in turn implies that  $|\{f\ge t\}|$ is constant for almost all $t\in (0,\|f\|_{\infty})$. Equivalently, this happens if and only if $f=c\chi_E$, where $E$ is a measurable set of finite measure and $c>0$. 

    Moreover, equality holds in \eqref{reversed chord} as well. In this case, Lemma \ref{q>n} indicates that $E$ is a ball, which  completes the characterization of the equality cases. 
\end{proof}

\section{Chord power integrals and radial mean bodies}\label{chord and RaK}

In this section, we recall some basic properties of chord power integrals and radial mean bodies, which are needed for the extension from convex bodies to log-concave functions in Sections \ref{chord f} and \ref{rdm-fcn}.

For a convex body $K\subset \rn$ and $\alpha\ge -1$, the chord power integral $I_{\alpha+1}(K)$ is defined by
\[I_{\alpha+1}(K)=\int_{\Aff}|K\cap l|^{\alpha+1}dl,\]
where $|K\cap l|$ denotes the length of the intersection. %Recall that $\Aff$ denotes the affine Grassmannian consisting of all $1$-dimensional lines in $\rn$ and $dl$ is the unique Haar measure on it.
Note that a random line $l\in \Aff$ is uniquely determined by an antipodal pair $u, -u\in\sn$ and the intersection point $x\in u^{\perp}\cap l$, where $u^{\perp}=\{y\in\rn: y\cdot u=0\}$. That is, $l=x+\tr u$. The Haar measure $dl$ is normalized so that
    \[dl=\frac{1}{n\omega_n}d\tH^{n-1}(x)du,\]
    where $du$ is the spherical Lebesgue measure on $\sn$. We denote by $\tH^{n-1}$ the $(n-1)$-dimensional Hausdorff measure on $u^{\perp}$, which coincides with the $(n-1)$-dimensional Lebesgue measure on $u^{\perp}$. The chord power integral can then be rewritten as
    \begin{equation}\label{chord-Xray}
        I_{\alpha+1}(K)=\frac{1}{n\omega_n}\int_{\sn}\int_{K|u^{\perp}}X_K(x,u)^{\alpha+1}d\tH^{n-1}(x)du,
    \end{equation}
    where $K|u^{\perp}$ denotes the orthogonal projection of $K$ onto $u^{\perp}$ and $X_K(x,u)=|K\cap (x+\tr u)|$ is known as the X-ray function of $K$.

There are three fundamental integral formulas connecting chord power integrals to the volume and surface area:
\begin{equation}\label{CCPH}
    I_0(K)=\frac{\omega_{n-1}}{n\omega_n}S(K),~~ I_1(K)=|K|,~~I_{n+1}(K)=\frac{n+1}{\omega_n}|K|^2,
\end{equation}
which are known, respectively, as Cauchy's integral formula, Crofton's volume formula, and Poincar\'e-Hadwiger's integral formula.

Recall that the chord isoperimetric inequality states that
\begin{align}
    I_{\alpha+1}(K) &\ge \tilde{\sigma}_{n,\alpha}|K|^{\frac{n+\alpha}{n}}, \quad \alpha\in(-1,0)\cup (n,\infty), \label{iso-chord-01}\\
    I_{\alpha+1}(K) &\le \tilde{\sigma}_{n,\alpha}|K|^{\frac{n+\alpha}{n}}, \quad \alpha\in(0,n), \label{iso-chord-1n+1}
\end{align}
where $\tilde{\sigma}_{n,\alpha}$ is given by
\begin{equation}\label{tilde sigma}
    \tilde{\sigma}_{n,\alpha}=\frac{|\alpha|(\alpha+1)}{n\omega_n}\sigma_{n,\alpha}=I_{\alpha+1}(B^n)\omega_n^{-\frac{n+\alpha}{n}}.
\end{equation}
By the Cauchy's integral formula in \eqref{CCPH}, the chord isoperimetric inequality converges to the classical isoperimetric inequality as $\alpha\rightarrow -1^+$.

We next relate chord power integrals to radial mean bodies, which were defined by Gardner and Zhang \cite{GZ}. For a convex body $K\subset\rn$ and $\alpha>-1$, the $\alpha$-th radial mean body $\Ra K$ is defined by
\begin{equation}\label{RaK}
    \rho_{\Ra K}(u)^{\alpha}=\frac{1}{|K|}\int_K\rho_{K-x}(u)^{\alpha}dx, ~\alpha\neq 0,
\end{equation}
and
\begin{equation}\label{R0K}
    \log\rho_{\rR_0 K}(u)=\frac{1}{|K|}\int_{K}\log\rho_{K-x}(u)dx.
\end{equation}
The following basic result shows the relation between $I_{\alpha+1}(K)$ and $\Ra K$.

\begin{lem}\label{chord-RaK}
    Let $\alpha>-1$. For a convex body $K\subset\rn$,
    \begin{equation}\label{dmv-RaK}
        I_{\alpha+1}(K)=\frac{(\alpha+1)|K|}{n\omega_n}\int_{\sn}\rho_{\Ra K}(u)^{\alpha}du.
    \end{equation}
\end{lem}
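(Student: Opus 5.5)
We would prove \eqref{dmv-RaK} by computing $\int_{\sn}\rho_{\Ra K}(u)^{\alpha}du$ from the definition \eqref{RaK} and rewriting it in the X-ray form \eqref{chord-Xray} of $I_{\alpha+1}(K)$. We treat $\alpha\neq 0$ first and handle $\alpha=0$ separately.

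By \eqref{RaK} and Fubini,
\[
|K|\int_{\sn}\rho_{\Ra K}(u)^{\alpha}du=\int_{\sn}\int_K\rho_{K-x}(u)^{\alpha}dx\,du .
\]
Now fix $u\in\sn$ and decompose $x\in K$ as $x=y+su$ with $y\in K|u^{\perp}$ and $s\in\tr$. For fixed $y$, the chord $K\cap(y+\tr u)$ is a segment $\{y+su: a\le s\le b\}$ with $b-a=X_K(y,u)$, since $K$ is convex. For $x=y+su$ on this chord we have $\rho_{K-x}(u)=b-s$, the distance from $x$ to the endpoint of the chord in direction $u$. Hence
\[
\int_K\rho_{K-x}(u)^{\alpha}dx=\int_{K|u^{\perp}}\int_a^b (b-s)^{\alpha}ds\,d\tH^{n-1}(y)=\frac{1}{\alpha+1}\int_{K|u^{\perp}}X_K(y,u)^{\alpha+1}d\tH^{n-1}(y).
\]
The inner integral converges because $\alpha>-1$; this is the only place that hypothesis is used. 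We then integrate over $u\in\sn$ and compare with \eqref{chord-Xray}. The right-hand side becomes $\frac{n\omega_n}{\alpha+1}I_{\alpha+1}(K)$, which rearranges exactly to \eqref{dmv-RaK}. The factor $(\alpha+1)$ therefore comes from the one-dimensional integral $\int_0^{\ell}t^{\alpha}dt=\ell^{\alpha+1}/(\alpha+1)$. Tonelli justifies every interchange, since all integrands are non-negative.

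For $\alpha=0$, read $\rho_{\rR_0K}(u)^{0}$ literally: both sides of \eqref{dmv-RaK} reduce to $|K|$. On the right this is $\frac{|K|}{n\omega_n}\cdot n\omega_n=|K|$, and on the left it is Crofton's formula $I_1(K)=|K|$ in \eqref{CCPH}. This also agrees with the general computation above, which for $\alpha=0$ gives $\int_{\sn}\int_{K|u^\perp}X_K\,d\tH^{n-1}du=n\omega_n|K|$.

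We expect the main obstacle to be the measure-theoretic bookkeeping rather than any estimate. One point is to check that $\rho_{K-x}(u)=b-s$ holds for almost every $x$; boundary points form a null set and cause no trouble. The other is to confirm that the normalization $dl=\frac{1}{n\omega_n}d\tH^{n-1}du$ matches the constant, noting that each line is counted once per direction $u\in\sn$, i.e. twice per antipodal pair. This double counting is consistent with \eqref{chord-Xray} as stated, so we adopt that formula verbatim and the constants match.
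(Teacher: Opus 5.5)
Your proposal is correct and follows the paper's proof essentially line by line. For $\alpha\neq 0$ you slice $K$ along lines parallel to $u$, use $\rho_{K-x}(u)=b-s$ to obtain $\int_K\rho_{K-x}(u)^{\alpha}dx=\frac{1}{\alpha+1}\int_{K|u^{\perp}}X_K(y,u)^{\alpha+1}d\tH^{n-1}(y)$, and compare with \eqref{chord-Xray}; for $\alpha=0$ you use Crofton's formula, just as the paper does (and in fact $\rho_{K-x}(u)=b-s$ holds for every $x\in K$, so no null-set argument is needed).
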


\begin{proof}
    If $\alpha=0$, \eqref{dmv-RaK} follows from the Crofton's volume formula in \eqref{CCPH} directly. We assume that $\alpha>-1$ and $\alpha\neq 0$. By the definition \eqref{RaK},
    \begin{equation*}
            \rho_{\Ra K}(u)^{\alpha}=\frac{1}{|K|}\int_K\rho_{K-x}(u)^{\alpha}dx=\frac{1}{|K|}\int_{K|u^{\perp}}\int_{s_1}^{s_2}(s_2-s)^{\alpha}dsd\tH^{n-1}(y),
    \end{equation*}
    where $s_1=\min\{s: y+su\in K\}$ and $s_2=\max\{s: y+su\in K\}$. Since $X_K(y,u)=s_2-s_1$, we have
    \begin{equation}\label{rdm-Xray}
        \rho_{\Ra K}(u)^{\alpha}=\frac{1}{(\alpha+1)|K|}\int_{K|u^{\perp}}X_K(y,u)^{\alpha+1}d\tH^{n-1}(y).
    \end{equation}
    Therefore, formula \eqref{dmv-RaK} follows from \eqref{chord-Xray}.
\end{proof}

We conclude this section with a basic computation concerning radial mean bodies, which also appear in \cite{GZ, HL3}. Here $K\triangle L$ denotes the symmetric difference of $K$ and $L$.

\begin{lem}\label{rdm-covariogram}
    Let $K\subset\rn$ be a convex body. Then
    \begin{align}
        \rho_{\Ra K}(u)^{\alpha}&=\frac{|\alpha|}{2|K|}\int_0^{\infty}r^{\alpha-1}|K\triangle (K+ru)|dr,&&\alpha\in(-1,0),\label{rdm-10}\\
        \rho_{\Ra K}(u)^{\alpha}&=\frac{\alpha}{|K|}\int_0^{\infty}r^{\alpha-1}|K\cap (K+ru)|dr,&&\alpha>0. \label{rdm0}
    \end{align}
\end{lem}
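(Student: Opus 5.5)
We prove both identities with the same device: express the overlap or difference volume $|K\cap(K+ru)|$ or $|K\triangle(K+ru)|$ as an integral over chords parallel to $u$, compute the resulting $r$-integral in closed form, and compare with the X-ray formula \eqref{rdm-Xray} from the proof of Lemma \ref{chord-RaK}. That formula gives, for $\alpha\neq0$,
\[\rho_{\Ra K}(u)^{\alpha}=\frac{1}{(\alpha+1)|K|}\int_{K|u^{\perp}}X_K(y,u)^{\alpha+1}d\tH^{n-1}(y).\]
So both statements reduce to a one-dimensional identity on each line $y+\tr u$, $y\in K|u^{\perp}$.

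Since $K$ is convex, $K\cap(y+\tr u)$ is a segment of length $\ell=X_K(y,u)$. Its intersection with the translate by $ru$ is a segment of length $(\ell-r)_+$. By Fubini,
\[|K\cap(K+ru)|=\int_{K|u^{\perp}}(X_K(y,u)-r)_+\,d\tH^{n-1}(y).\]
For the symmetric difference we use $|K\triangle(K+ru)|=2|K|-2|K\cap(K+ru)|$. Fiberwise this equals
\[|K\triangle(K+ru)|=2\int_{K|u^{\perp}}\min\{r,X_K(y,u)\}\,d\tH^{n-1}(y).\]

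For $\alpha>0$, insert the first formula into the right-hand side of \eqref{rdm0} and apply Tonelli, which is fine since everything is nonnegative. The inner integral is
\[\alpha\int_0^\ell r^{\alpha-1}(\ell-r)\,dr=\ell^{\alpha+1}-\frac{\alpha}{\alpha+1}\ell^{\alpha+1}=\frac{\ell^{\alpha+1}}{\alpha+1}.\]
Dividing by $|K|$ and comparing with \eqref{rdm-Xray} gives \eqref{rdm0}.

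For $\alpha\in(-1,0)$, insert the second formula into the right-hand side of \eqref{rdm-10}. The prefactor $\frac{|\alpha|}{2|K|}$ times the factor $2$ leaves $|\alpha|\int_0^\infty r^{\alpha-1}\min\{r,\ell\}\,dr$. We split this at $r=\ell$:
\[\int_0^\ell r^{\alpha}\,dr+\ell\int_\ell^\infty r^{\alpha-1}\,dr=\frac{\ell^{\alpha+1}}{\alpha+1}+\frac{\ell^{\alpha+1}}{|\alpha|}.\]
Both pieces converge: the first because $\alpha>-1$, the second because $\alpha<0$. Multiplying by $|\alpha|$ gives
\[\ell^{\alpha+1}\Big(\frac{|\alpha|}{\alpha+1}+1\Big)=\frac{\ell^{\alpha+1}}{\alpha+1},\]
using $|\alpha|=-\alpha$. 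Integrating over $y$ and comparing with \eqref{rdm-Xray} gives \eqref{rdm-10}.

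The only step needing care is the fiberwise description. We need that each chord is a single segment, which is where convexity is used, and that the boundary of $K$ is null, so that the lengths computed for the sections of $K\cap(K+ru)$ are the correct ones. Beyond that, the argument is just justifying Tonelli and checking that the $r$-integrals converge in each range of $\alpha$, as done above.
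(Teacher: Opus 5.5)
Your proof is correct and is essentially the paper's argument. For $\alpha\in(-1,0)$ the paper uses the same fiberwise identity $\int_{K|u^{\perp}}\min\{X_K(x,u),r\}\,d\tH^{n-1}(x)=\frac12|K\triangle(K+ru)|$ and the same split of the $r$-integral at $X_K$, then compares with \eqref{rdm-Xray}. For $\alpha>0$ the paper simply cites Gardner--Zhang, whereas your direct computation with $(X_K-r)_+$ handles that case by the same method and makes the lemma self-contained.
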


\begin{proof}
Formula \eqref{rdm0} is stated in \cite[Lemma 3.1]{GZ}. Here, it suffices to consider the case $\alpha\in(-1,0)$. Since
    \begin{equation*}
    \begin{aligned}
        \int_{K|u^{\perp}}X_K(x,u)^{\alpha+1}&d\tH^{n-1}(x)\\
        &=|\alpha|(\alpha+1)\int_{K|u^{\perp}}\Big(\int_0^{X_K(x,u)}r^{\alpha}dr+\int_{X_K(x,u)}^{\infty}r^{\alpha-1}X_K(x,u)dr\Big)d\tH^{n-1}(x)\\
        &=|\alpha|(\alpha+1)\int_{K|u^{\perp}}\int_0^{\infty}r^{\alpha-1}\min\{X_K(x,u), r\}drd\tH^{n-1}(x)\\
        &=|\alpha|(\alpha+1)\int_0^{\infty}r^{\alpha-1}\int_{K|u^{\perp}}\min\{X_K(x,u),r\}d\tH^{n-1}(x)dr\\
        &=\frac{|\alpha|(\alpha+1)}{2}\int_0^{\infty}r^{\alpha-1}|K\triangle (K+ru)|dr,
    \end{aligned}
\end{equation*}
the equality \eqref{rdm-10} follows from \eqref{rdm-Xray}.
\end{proof}

\begin{comment}
    For $\alpha>0$, note that
\begin{equation*}
    \begin{aligned}
        \int_{K|u^{\perp}}X_K(x,u)^{\alpha+1}dx
        &=\alpha(\alpha+1)\int_{K|u^{\perp}}\int_0^{X_K(x,u)}r^{\alpha-1}\Big(X_K(x,u)-r\Big)drd\tH^{n-1}(x)\\
        &=\alpha(\alpha+1)\int_0^{\infty}r^{\alpha-1}\int_{K|u^{\perp}} \Big(X_K(x,u)-r\Big)_+d\tH^{n-1}(x)dr,
        %&=\alpha(\alpha+1)\int_0^{\infty}r^{\alpha-1}|K\cap (K+ru)|dr,
    \end{aligned}
\end{equation*}
where $h(x)_+=\max\{h(x), 0\}$ denotes the positive part of the function $h(x)$. Hence
\[\int_{K|u^{\perp}}X_K(x,u)^{\alpha+1}d\tH^{n-1}(x)=\alpha(\alpha+1)\int_0^{\infty}r^{\alpha-1}|K\cap (K+ru)|dr,\]
together with \eqref{rdm-Xray}, we obtain \eqref{rdm0}.
\end{comment}

\section{Chord power integrals for functions}\label{chord f}

In this section, we extend the notion of chord power integrals from convex bodies to general non-negative measurable functions via level-set representations, which link the functional setting to the geometric counterpart.

Visintin \cite{Vis} pointed out that, as  a consequence of Fubini's theorem, a
 generalized coarea formula can be established. More precisely, for $\alpha\in(-1,0)$ and non-negative $f\in W^{-\alpha,1}(\rn)$, 
 \begin{equation}\label{Vis}
     \di\frac{|f(x)-f(y)|}{|x-y|^{n-\alpha}}dxdy=\frac{2n\omega_n}{|\alpha|(\alpha+1)}\int_0^{\infty}I_{\alpha+1}(\{f\ge t\})dt.
 \end{equation}

For $\alpha>0$ we have the following result.

\begin{thm}\label{lay}
    For $\alpha>0$ and log-concave $f\in L^1(\rn)$,
    \[\frac{\alpha(\alpha+1)}{n\omega_n}\di\frac{\min\{f(x),f(y)\}}{|x-y|^{n-\alpha}}dxdy=\int_0^{\infty}I_{\alpha+1}(\{f\ge t\})dt.\]
\end{thm}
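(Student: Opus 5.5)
The plan is to combine the layer-cake formula for the kernel $\min\{f(x),f(y)\}$ with the classical formula \eqref{chord-integral formula>1}, applied to each superlevel set. As in the proof of Theorem A, we start from
\[\min\{f(x),f(y)\}=\int_0^{\infty}\chi_{\{f\ge t\}}(x)\chi_{\{f\ge t\}}(y)\,dt .\]
Since everything is non-negative, Tonelli's theorem lets us interchange the integrals:
\[\di\frac{\min\{f(x),f(y)\}}{|x-y|^{n-\alpha}}dxdy=\int_0^{\infty}\di\frac{\chi_{\{f\ge t\}}(x)\chi_{\{f\ge t\}}(y)}{|x-y|^{n-\alpha}}dxdy\,dt .\]
This identity holds in $[0,\infty]$ with no further hypothesis on $f$.

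Next we identify the level sets. Because $f$ is log-concave, each superlevel set $K_t=\{f\ge t\}$ with $0<t<\|f\|_\infty$ is convex. Because $f\in L^1(\rn)$, each such $K_t$ has finite measure, and a log-concave integrable function decays exponentially, so these sets are bounded. After modifying $f$ on a null set (taking the upper semicontinuous regularization, which changes neither side), $K_t$ is closed. Hence $K_t$ is a convex body whenever it has non-empty interior. For $t>\|f\|_\infty$ the set is empty. The degenerate levels, where $K_t$ is a lower-dimensional convex set of measure zero, contribute zero to the left-hand side, and they also give $I_{\alpha+1}(K_t)=0$, since almost every line meets such a set in a segment of length zero. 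There are at most countably many such levels, or they form a null set of $t$, so they can be ignored.

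On the levels where $K_t$ is a convex body we apply \eqref{chord-integral formula>1}:
\[\frac{\alpha(\alpha+1)}{n\omega_n}\di\frac{\chi_{K_t}(x)\chi_{K_t}(y)}{|x-y|^{n-\alpha}}dxdy=I_{\alpha+1}(K_t).\]
Multiplying the Tonelli identity by $\frac{\alpha(\alpha+1)}{n\omega_n}$ and integrating in $t$ then gives the statement.

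It remains to check finiteness, which the paper asserts. Since $K_t\subset \mathrm{diam}(K_t)B^n$-type bounds hold, we have
\[I_{\alpha+1}(K_t)\le \mathrm{diam}(K_t)^{\alpha}\,I_1(K_t)=\mathrm{diam}(K_t)^{\alpha}|K_t| .\]
For log-concave $f$ one has $f(x)\le Ce^{-c|x|}$, so $\mathrm{diam}(K_t)\lesssim 1+\log(C/t)$. Together with $\int_0^\infty|K_t|\,dt=\|f\|_1$ and the bound $|K_t|\lesssim(1+\log(C/t))^n$, this makes $\int_0^\infty (1+\log(C/t))^{\alpha}|K_t|\,dt$ finite, because $\log^{\alpha+n}(1/t)$ is integrable near $0$.

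I expect the main obstacle to be this justification that the superlevel sets are genuine convex bodies up to null sets, together with the handling of the degenerate levels, rather than the computation itself, which is a direct application of Tonelli's theorem and \eqref{chord-integral formula>1}.
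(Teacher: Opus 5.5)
Your proof is correct and follows the same route as the paper: the layer-cake identity $\min\{f(x),f(y)\}=\int_0^\infty\chi_{\{f\ge t\}}(x)\chi_{\{f\ge t\}}(y)\,dt$, an interchange of integrals by Tonelli/Fubini, and formula \eqref{chord-integral formula>1} applied to each superlevel set. Your extra treatment of closedness, degenerate levels and finiteness goes beyond what the paper writes there (finiteness is the paper's Lemma \ref{finiteness}). It is not needed for the identity itself, which holds in $[0,\infty]$.
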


\begin{proof}

Recall that for a convex body $K\subset\rn$ and $\alpha>0$,
\[I_{\alpha+1}(K)=\frac{\alpha(\alpha+1)}{n\omega_n}\di\frac{\chi_K(x)\chi_K(y)}{|x-y|^{n-\alpha}}dxdy.\]
By Fubini's theorem, we have
\[\int_0^{\infty}I_{\alpha+1}(\{f\ge t\})dt=\frac{\alpha(\alpha+1)}{n\omega_n}\di \int_0^{\|f\|_{\infty}}\frac{\chi_{\{f\ge t\}}(x)\chi_{\{f\ge t\}}(y)}{|x-y|^{n-\alpha}}dtdxdy.\]

Since
\[\min\{f(x), f(y)\}=\int_0^{\|f\|_{\infty}}\chi_{\{f\ge t\}}(x)\chi_{\{f\ge t\}}(y)dt,\]
we conclude the proof by substituting this identity.
\end{proof}

It remains to consider the endpoint cases $\alpha=-1$ and $\alpha=0$.
For non-negative $f\in W^{1,1}(\mathbb{R}^n)$, where
\[W^{1,1}(\rn)=\Big\{f\in L^1(\rn): |\nabla f|\in L^1(\rn)\Big\},\]
it follows from \cite[Theorem 3]{BBM} (see also \cite[Theorem 3]{HL1}) that
\begin{equation*}
\begin{aligned}
    \lim_{\alpha\rightarrow -1^+}(1+\alpha)\di \frac{|f(x)-f(y)|}{|x-y|^{n-\alpha}}dxdy&=2\omega_{n-1}\int_{\rn}|\nabla f(x)|dx\\
    &=2\omega_{n-1}\int_0^{\infty}\tH^{n-1}(\partial \{f\ge t\})dt,
\end{aligned}
\end{equation*}
where the last equality follows from the coarea formula \cite[Theorem 5.9]{EG}. Hence
\begin{equation}\label{I0}
    \lim_{\alpha\rightarrow -1^+}(1+\alpha)\di \frac{|f(x)-f(y)|}{|x-y|^{n-\alpha}}dxdy=2n\omega_n\int_0^{\infty}I_0(\{f\ge t\})dt.
\end{equation}

The following two lemmas show that the case $\alpha=0$ is also well-defined.

\begin{lem}\label{conv-Lp}
    For non-negative $f\in L^{\frac12}(\rn)\cap L^1(\rn)$,
    \[\lim_{\alpha\rightarrow 0^+}\|f\|_\frac{n}{n+\alpha}=\|f\|_1.\]
\end{lem}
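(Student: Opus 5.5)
We must show that $\lim_{\alpha\to0^+}\|f\|_{\frac{n}{n+\alpha}}=\|f\|_1$. Set $p=p(\alpha)=\frac{n}{n+\alpha}$. For $\alpha\in(0,n)$ we have $p\in(\tfrac12,1)$, and $p\to1^-$ as $\alpha\to0^+$. We write $\|f\|_p=\exp\big(\tfrac1p\log\int f^p\big)$. Since $1/p\to1$, it suffices to prove that $\int_{\rn}f^p\,dx\to\int_{\rn}f\,dx$ as $p\to1^-$. This also requires that $\int f^p$ stay finite and, when $\|f\|_1>0$, bounded away from zero. The case $f=0$ a.e. is trivial.

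The main step is a dominated convergence argument. For $p\in[\tfrac12,1]$ we use the pointwise bound
\[
f(x)^p\le \max\{f(x)^{1/2},f(x)\}\le f(x)^{1/2}+f(x),
\]
which we check separately on $\{f\le1\}$, where $f^p\le f^{1/2}$, and on $\{f>1\}$, where $f^p\le f$. By the hypothesis $f\in L^{\frac12}(\rn)\cap L^1(\rn)$, the right-hand side is integrable and independent of $p$. Pointwise, $f(x)^p\to f(x)$ as $p\to1$ at every point where $f(x)\in[0,\infty)$, that is, almost everywhere (with the convention $0^p=0$).

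By dominated convergence, $\int f^p\to\int f=\|f\|_1\in(0,\infty)$. By continuity of $(s,q)\mapsto s^{1/q}$ on $(0,\infty)\times(0,\infty)$, we conclude $\|f\|_p=(\int f^p)^{1/p}\to\|f\|_1$.

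There is no real obstacle here. The only point needing care is that the integrable majorant must be uniform in $p$ near $1$, and this is exactly what the $L^{1/2}$ assumption provides on the set $\{f\le1\}$, where $f$ is small and $f^p$ can exceed $f$.
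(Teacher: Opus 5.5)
Your proof is correct and follows essentially the same route as the paper. The paper splits $\mathbb{R}^n$ into $\{f\ge 1\}$ and its complement, bounds $f^{n/(n+\alpha)}$ by $f$ on the first set and by $f^{1/2}$ on the second, and applies dominated convergence on each piece; this is exactly your single majorant $f^{1/2}+f$. You also spell out the passage from $\int f^p\to\|f\|_1$ to $\|f\|_p\to\|f\|_1$ via $1/p\to 1$, and the trivial case $f=0$. The paper leaves both of these steps implicit.
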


\begin{proof}
    Let $A=\{f\ge 1\}$ and $B=\rn\backslash\{f\ge 1\}$. Assume that $(\alpha_k)$ is a positive sequence such that $\lim_{k\rightarrow \infty}\alpha_k=0$. 
    
    For $x\in A$, we have $f(x)^\frac{n}{n+\alpha_k}\leq f(x)$.
The dominated convergence theorem implies that
\begin{align*}
\lim_{k\to \infty} \int_A f(x)^\frac{n}{n+\alpha_k}dx= \int_A f(x) dx.   
\end{align*}

For $x\in B$, we have $f(x)^\frac{n}{n+\alpha_k}\leq f(x)^{1/2}$. By the dominated convergence theorem, we have
\begin{align*}
    \lim_{k\to \infty} \int_B f(x)^\frac{n}{n+\alpha_k}dx=\int_B f(x)dx,
\end{align*}
which completes the proof.
\end{proof}

\begin{lem}\label{I1}
Let  $f\in L^{\frac12}(\rn)\cap L^1(\rn)$ be non-negative. Then
\begin{align}
\lim_{\alpha \to 0^+} \alpha \int_{\rn}\int_{\rn} \frac{\min\{f(x),f(y)\}}{|x-y|^{n-\alpha}}dxdy=n\omega_n\|f\|_1.    
\end{align}
\end{lem}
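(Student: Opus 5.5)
We need $\lim_{\alpha\to0^+}\alpha\iint \min\{f(x),f(y)\}|x-y|^{\alpha-n}\,dx\,dy = n\omega_n\|f\|_1$. The plan is to squeeze the quantity between a lower bound and an upper bound that share this limit.

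\textbf{Upper bound.} Theorem A gives
\[
\alpha\iint\frac{\min\{f(x),f(y)\}}{|x-y|^{n-\alpha}}\,dx\,dy\le \alpha\sigma_{n,\alpha}\|f\|_{\frac{n}{n+\alpha}}.
\]
From \eqref{optimal constant},
\[
\alpha\sigma_{n,\alpha}=\frac{n2^{\alpha}\pi^{\frac{n-\alpha-1}{2}}\Gamma(\frac{\alpha+1}{2})\Gamma(\frac n2+1)^{\alpha/n}}{\Gamma(\frac{n+\alpha}{2}+1)}\;\longrightarrow\;\frac{n\pi^{\frac{n-1}{2}}\Gamma(\frac12)}{\Gamma(\frac n2+1)}=n\omega_n .
\]
Lemma \ref{conv-Lp} gives $\|f\|_{\frac{n}{n+\alpha}}\to\|f\|_1$, since the hypotheses match. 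Hence the $\limsup$ is at most $n\omega_n\|f\|_1$.

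\textbf{Lower bound.} Restrict the inner integral to $|x-y|\le 1$ and use $\min\{f(x),f(y)\}=f(x)-(f(x)-f(y))_+$. The main term is
\[
\alpha\int_{\rn}f(x)\int_{|z|\le1}|z|^{\alpha-n}\,dz\,dx=\alpha\cdot\frac{n\omega_n}{\alpha}\,\|f\|_1=n\omega_n\|f\|_1 .
\]
The error term is $\alpha\int_{|z|\le1}|z|^{\alpha-n}\int(f(x)-f(x+z))_+\,dx\,dz$. The function $g(z)=\int (f(x)-f(x+z))_+\,dx$ is bounded by $\|f\|_1$ and tends to $0$ as $z\to0$, by continuity of translation in $L^1$. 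Given $\varepsilon>0$, choose $\delta$ with $g\le\varepsilon$ on $|z|\le\delta$. Then the error is at most
\[
\varepsilon\, n\omega_n\delta^\alpha+\|f\|_1\, n\omega_n(1-\delta^\alpha),
\]
which tends to $\varepsilon n\omega_n$ as $\alpha\to0^+$. So the $\liminf$ is at least $n\omega_n\|f\|_1-\varepsilon n\omega_n$ for every $\varepsilon>0$.

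Combining the two bounds gives the claim. The only genuinely delicate point is the upper bound: the contribution from $|x-y|>1$ must vanish after multiplying by $\alpha$. Bounding it directly would need more integrability, and Theorem A avoids this by controlling the whole integral. This is presumably why $f\in L^{1/2}$ is assumed, since it is exactly what Lemma \ref{conv-Lp} requires.
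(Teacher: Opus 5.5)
Your proof is correct. The upper bound is the same as the paper's: Theorem A, Lemma \ref{conv-Lp}, and $\alpha\sigma_{n,\alpha}\to n\omega_n$. The lower bound, however, takes a different route.

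The paper writes $\min\{f(x),f(y)\}=\int_0^\infty\chi_{\{f\ge t\}}(x)\chi_{\{f\ge t\}}(y)\,dt$. It then imports the set-level limit $\alpha\iint\chi_E(x)\chi_E(y)|x-y|^{\alpha-n}\,dx\,dy\to n\omega_n|E|$ from \cite[Lemma 4.1]{Cai} for each superlevel set, and concludes by Fatou's lemma. This keeps the argument inside the paper's level-set framework but relies on an outside result.

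You instead use $\min\{a,b\}=a-(a-b)_+$ together with $L^1$-continuity of translations. This is self-contained, and it gives slightly more: the near-diagonal part $\alpha\int\!\int_{|x-y|\le1}\min\{f(x),f(y)\}|x-y|^{\alpha-n}\,dx\,dy$ actually converges to $n\omega_n\|f\|_1$, since it is trivially bounded above by that value.

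Your closing remark is off, though. The far part needs no integrability beyond the hypothesis. For $|x-y|\ge1$ and $\alpha<n$, the bounds $\min\{f(x),f(y)\}\le f(x)^{1/2}f(y)^{1/2}$ and $|x-y|^{\alpha-n}\le1$ give $\alpha\int\!\int_{|x-y|\ge1}\min\{f(x),f(y)\}|x-y|^{\alpha-n}\,dx\,dy\le\alpha\|f\|_{\frac12}\to0$. Combined with your near-diagonal computation, this gives a fully elementary proof that bypasses Theorem A entirely, and it is exactly here that the hypothesis $f\in L^{\frac12}$ is used.
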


\begin{proof}
By Lemma \ref{conv-Lp}, $f\in L^{\frac{n}{n+\alpha}}(\rn)$ for $\alpha$ sufficiently small. Hence Theorem A implies that
\[\limsup_{\alpha\rightarrow 0^+}\alpha\di \frac{\min\{f(x),f(y)\}}{|x-y|^{n-\alpha}}dxdy\leq \lim_{\alpha\rightarrow 0^+}\alpha\sigma_{n,\alpha}\|f\|_{\frac{n}{n+\alpha}}=n\omega_n\|f\|_1.\]
On the other hand, by the layer cake formula for $\min\{f(x), f(y)\}$ and Fubini's theorem,
\[\di\frac{\min \{f(x), f(y)\} }{|x-y|^{n-\alpha}}dxdy=\int_0^{\infty}\di \frac{ \chi_{ \{f\ge t\} }(x) \chi_{ \{f\ge t\} }(y) }{|x-y|^{n-\alpha}}dxdydt.\]

By \cite[Lemma 4.1]{Cai},
\[\lim_{\alpha\rightarrow 0}\alpha\di\frac{\chi_{ \{f\ge t\} }(x)\chi_{ \{f\ge t\} }(y)}{|x-y|^{n-\alpha}}dxdy=n\omega_n|\{f\ge t\}|.\]
Therefore, Fatou's Lemma implies that
\[n\omega_n\|f\|_1=\int_0^{\infty}n\omega_n|\{f\ge t\}|dt\leq \liminf_{\alpha\rightarrow 0^+}\alpha\di\frac{\min \{f(x), f(y)\} }{|x-y|^{n-\alpha}}dxdy,\]
which completes the proof.
\end{proof}

Recall that $I_1(K)=|K|$. Lemma \ref{I1} indicates that
\begin{equation}\label{I11}
    \lim_{\alpha\rightarrow 0^+}\frac{\alpha(\alpha+1)}{n\omega_n}\di \frac{\min\{f(x), f(y)\} }{|x-y|^{n-\alpha}}dxdy=\int_0^{\infty}I_1(\{f\ge t\})dt,
\end{equation}
for log-concave functions $f\in L^1(\mathbb{R}^n)$.

Theorem \ref{lay}, together with \eqref{Vis}, \eqref{I0} and \eqref{I11}, indicates a natural functional analogue of chord power integrals. Before introducing the extension, we first establish that the associated level-set representation is well-defined for log-concave, integrable functions for all $\alpha\ge -1$.

\begin{lem}\label{finiteness}
    Let $\alpha\ge-1$ and $f\in L^1(\rn)$ be a log-concave function. Then $f\in L^{\frac12}(\rn)$ and
    \[\int_0^{\|f\|_{\infty}}I_{\alpha+1}(\{f\ge t\})dt<\infty.\]
\end{lem}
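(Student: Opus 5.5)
We will use the standard structure of integrable log-concave functions. Write $f=e^{-\phi}$ with $\phi$ convex. Integrability forces a bound $f(x)\le Ae^{-b|x|}$ for some $A,b>0$; in particular $\|f\|_\infty<\infty$. The bound gives $f^{1/2}\le A^{1/2}e^{-b|x|/2}$, which is integrable, so $f\in L^{\frac12}(\rn)$.

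For $0<t<\|f\|_\infty$ the superlevel set $K_t=\{f\ge t\}$ is convex, and by the exponential bound it lies in the ball of radius $R(t)=b^{-1}\log(A/t)$. For $t$ near $\|f\|_\infty$, $K_t$ may be lower-dimensional or have empty interior. We handle this by noting that $I_{\alpha+1}$ of a convex set with empty interior is $0$ for $\alpha>-1$. It is also zero, up to a null set of $t$, in the Cauchy case, since only countably many… rather, we simply restrict the integral to those $t$ with $|K_t|>0$. Alternatively, if $\{f=\|f\|_\infty\}$ is lower-dimensional, the remaining set of $t$ has measure zero.

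The key step is a monotonicity bound for convex $K\subset RB^n$. For $\alpha\ge0$, $|K\cap l|\le 2R$ gives
\[I_{\alpha+1}(K)\le (2R)^\alpha I_1(K)=(2R)^\alpha|K|.\]
For $\alpha\in[-1,0)$, Hölder or Jensen on lines gives $I_{\alpha+1}(K)\le I_0(K)^{-\alpha}I_1(K)^{1+\alpha}$. By \eqref{CCPH}, $I_0(K)=cS(K)\le cS(RB^n)$, using monotonicity of surface area for nested convex sets. Both cases reduce to a quantity bounded by a power of $R(t)$ times something integrable.

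Concretely, for $\alpha\ge0$ we need
\[\int_0^{\|f\|_\infty}(2R(t))^\alpha|K_t|\,dt\le C\int_0^{\|f\|_\infty}\log(A/t)^{\alpha}\,\omega_n R(t)^n\,dt,\]
which is finite because $\int_0^1|\log t|^{m}dt<\infty$. For $\alpha\in[-1,0)$ we use
\[I_{\alpha+1}(K_t)\le C R(t)^{(n-1)(-\alpha)+n(1+\alpha)},\]
again a polynomial in $\log(A/t)$, hence integrable near $0$. Near $t=\|f\|_\infty$ everything is bounded, since $R(t)$ stays bounded there.

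The main obstacle is establishing the exponential decay bound for log-concave integrable $f$. We take it as standard: the level set $K_{t_0}$, with $t_0=\|f\|_\infty/2$ say, is a bounded convex set (bounded because $f$ is integrable and $K_{t_0}$ has positive measure, so an unbounded convex set of positive measure would have infinite volume). Convexity of $\phi$ along rays from a point of $K_{t_0}$ then yields linear growth of $\phi$ outside a neighborhood of $K_{t_0}$.
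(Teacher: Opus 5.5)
Your proof is correct and is essentially the paper's argument. Both use a linear lower bound on the convex potential (the paper likewise imports it, from Colesanti--Fragal\`a; in your sketch of it, start from any $t_0$ with $|\{f>t_0\}|>0$ rather than $\|f\|_\infty/2$, since finiteness of $\|f\|_\infty$ is a consequence of that bound), containment of $\{f\ge t\}$ in a ball of radius comparable to $\log(1/t)$, and integrability of powers of $\log(1/t)$ near $0$. The paper shortens the middle step with monotonicity of $I_{\alpha+1}$ under inclusion and the homogeneity $I_{\alpha+1}(\lambda K)=\lambda^{n+\alpha}I_{\alpha+1}(K)$, which bound $I_{\alpha+1}(\{f\ge t\})$ by a constant times $(-\log t-b)^{n+\alpha}$ for every $\alpha>-1$ at once (Cauchy's formula is needed only at $\alpha=-1$); so your split into $\alpha\ge 0$ and H\"older interpolation for $\alpha\in[-1,0)$ is correct but unnecessary, and your degeneracy discussion is moot because $|\{f\ge t\}|\ge|\{f>t\}|>0$ for every $t<\|f\|_\infty$.
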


\begin{proof}
    Since $f$ is log-concave and integrable, there is a convex function $v:\rn\rightarrow (-\infty,\infty]$ such that $f(x)=e^{-v(x)}$ and $\lim_{|x|\rightarrow\infty}v(x)=\infty$. By \cite[Lemma 2.5]{CF}, there exist constants $a,b\in\tr$ with $a>0$ such that
    \begin{equation}\label{L1log}
        v(x)\ge a|x|+b,
    \end{equation}
    for all $x\in\rn$. Then
    \[f(x)^{1/2}\leq e^{-\frac{1}{2}(a|x|+b)},\]
    where the right-hand side is integrable. Hence $f\in L^{\frac12}(\rn)$.

    Moreover, \eqref{L1log} also implies that
    \begin{equation}\label{inclusion}
        \{f\ge t\}\subset B_t:=\{x\in \rn: a|x|+b\leq -\log t\}.
    \end{equation}
    We first consider $\alpha>-1$. Since $I_{\alpha+1}(\lambda K)=\lambda^{n+\alpha}I_{\alpha+1}(K)$, the inclusion \eqref{inclusion} implies that
    \begin{equation*}
        \begin{aligned}
            \int_0^{\|f\|_{\infty}}I_{\alpha+1}(\{f\ge t\})dt\leq \int_0^{\|f\|_\infty}I_{\alpha+1}(B_t)dt&=\frac{I_{\alpha+1}(B^n)}{a^{n+\alpha}}\int_0^{\|f\|_{\infty}}(-\log t-b)^{n+\alpha}dt\\
            &=\frac{I_{\alpha+1}(B^n)}{a^{n+\alpha}e^b}\int_{-\log\|f\|_{\infty}-b}^{\infty}s^{n+\alpha}e^{-s}ds<\infty.
        \end{aligned}
    \end{equation*}

For $\alpha=-1$, by the Cauchy's integral formula in \eqref{CCPH},
    \begin{equation*}
        \int_0^{\|f\|_{\infty}}I_0(\{f\ge t\})dt=\frac{\omega_{n-1}}{n\omega_n}\int_0^{\|f\|_{\infty}}\tH^{n-1}(\partial \{f\ge t\})dt.
    \end{equation*}
    Since $\{f\ge t\}$ is convex, and $\{f\ge t\}\subset B_t$ as shown in \eqref{inclusion}, we have
    \[\tH^{n-1}(\partial \{f\ge t\})\leq \tH^{n-1}(\partial B_t)=\frac{n\omega_n}{a^{n-1}}(-\log t-b)^{n-1}.\]
    This implies that
    \begin{equation*}
       \begin{aligned}
            \int_0^{\|f\|_\infty}I_0(\{f\ge t\})dt&\leq \frac{\omega_{n-1}}{a^{n-1}}\int_0^{\|f\|_{\infty}}(-\log t-b)^{n-1}dt\\&=\frac{\omega_{n-1}}{a^{n-1}e^b}\int_{-\log\|f\|_{\infty}-b}^{\infty}s^{n-1}e^{-s}ds<\infty,
        \end{aligned}
    \end{equation*}
    which concludes the proof.
\end{proof}

With this finiteness result in hand, we now introduce the definition of the chord power integral for log-concave, integrable functions.

\begin{defn}\label{def-I_qf}
    Let $\alpha\ge -1$ and $f\in L^1(\rn)$ be a log-concave function. The chord power integral for $f$ is defined by
    \[I_{\alpha+1}(f)=\int_0^{\|f\|_{\infty}}I_{\alpha+1}(\{f\ge t\})dt.\]
\end{defn}

\begin{rem}\label{rem-ex}
    The definition can be extended to a broader class when $\alpha\in(-1,n)$. For $\alpha\in(-1,0)$,
    \[I_{\alpha+1}(f)=\frac{|\alpha|(\alpha+1)}{2n\omega_n}\di\frac{|f(x)-f(y)|}{|x-y|^{n-\alpha}}dxdy,\]
    is finite for $f\in W^{-\alpha,1}(\rn)$. For $\alpha\in(0,n)$, Theorem A implies that
    \[I_{\alpha+1}(f)=\frac{\alpha(\alpha+1)}{n\omega_n}\di \frac{ \min\{f(x), f(y)\} }{|x-y|^{n-\alpha}}dxdy\]
    is finite for non-negative $f\in  L^{\frac{n}{n+\alpha}}(\rn)$. This observation suggests possible extensions of chord power integrals from convex bodies to general Borel sets.
\end{rem}

By Definition \ref{def-I_qf}, Lemma \ref{I1}, \eqref{I0} and \eqref{CCPH}, we have the extended Cauchy's integral formula, Crofton's volume formula,  and Poincar\'e-Hadwiger's integral formula as follows. Here $f\in L^1(\rn)$ is log-concave.
\vskip7pt

\noindent{\bf Cauchy's Integral Formula:}
\begin{equation*}
    \begin{aligned}
        I_0(f)=\frac{\omega_{n-1}}{n\omega_n}\int_0^{\|f\|_{\infty}}|\nabla f(x)|dx,
    \end{aligned}
\end{equation*}
provided $f\in W^{1,1}(\rn)$.
\vskip 7pt

\noindent{\bf Crofton's Integral Formula:}
\[I_1(f)=\|f\|_1.\]

\vskip 7pt

\noindent{\bf Poincar\'e-Hadwiger's Integral Formula:}
\begin{equation*}
    \begin{aligned}
        I_{n+1}(f)=\int_0^{\|f\|_{\infty}}I_{n+1}(\{f\ge t\})dt&=\frac{n+1}{\omega_n}\int_0^{\|f\|_{\infty}}|\{f\ge t\}|^2dt\\
        &=\frac{n+1}{\omega_n}\di \min\{f(x), f(y)\}dxdy.
    \end{aligned}
\end{equation*}

Moreover, by Definition \ref{def-I_qf}, the fractional Sobolev inequality \eqref{frac-Sob} and the chord Sobolev inequality \eqref{chord-Sob} admit the following unified representation in terms of $I_{\alpha+1}(f)$.
\vskip 7pt

\noindent{\bf Functional chord isoperimetric inequalities.} For a log-concave, integrable function $f$,
\begin{equation*}
    \begin{aligned}
        I_{\alpha+1}(f)&\ge \tilde{\sigma}_{n,\alpha}\|f\|_{\frac{n}{n+\alpha}},\quad \alpha\in(-1,0),\\
        I_{\alpha+1}(f)&\le \tilde{\sigma}_{n,\alpha}\|f\|_{\frac{n}{n+\alpha}},\quad \alpha\in(0,n),
    \end{aligned}
\end{equation*}
and
\[I_{\alpha+1}(f)\ge \tilde{\sigma}_{n,\alpha}\|f\|_1^{\frac{n+\alpha}{n}}\|f\|_{\infty}^{-\frac{\alpha}{n}},\quad\alpha>n,\]
where the $\tilde{\sigma}_{n,\alpha}$ is given by \eqref{tilde sigma}.

\section{Radial mean bodies for functions}\label{rdm-fcn}

In this section, we study radial mean bodies in the functional setting and develop a formulation adapted to our framework. We focus in particular on the case $\alpha=0$, introducing a functional object $\rR_0 f$ that arises as a limiting quantity and is adapted to the analysis of Theorem C in Sections \ref{entropy} and \ref{log Sob ineq}. Our approach emphasizes a direct and transparent representation compatible with the level-set structure developed earlier.

As shown in Lemma \ref{chord-RaK},
\[I_{\alpha+1}(K)=\frac{(\alpha+1)|K|}{n\omega_n}\int_{\sn}\rho_{\Ra K}(u)^{\alpha}du,\]
where $\Ra K$ is the radial $\alpha$-th mean body of a convex body $K\subset \rn$. For a non-negative function $f$, it follows from Definition \ref{def-I_qf} that
\begin{equation*}
\begin{aligned}
    I_{\alpha+1}(f)&=\int_0^{\|f\|_{\infty}}I_{\alpha+1}(\{f\ge t\})dt=\frac{\alpha+1}{n\omega_n}\int_0^{\|f\|_{\infty}}|\{f\ge t\}|\int_{\sn}\rho_{\Ra \{f\ge t\}}(u)^{\alpha}dudt.
\end{aligned}
\end{equation*}
Assume that $f$ is non-zero. We define a probability measure $\mu_f$ on $(0,\|f\|_\infty)$ by
\[d\mu_f(t)=\frac{|\{f\ge t\}|}{\|f\|_1}dt.\]
Then
\begin{equation}\label{If-Raf}
    I_{\alpha+1}(f)=\frac{(\alpha+1)\|f\|_1}{n\omega_n}\int_{\sn}\int_0^{\|f\|_{\infty}}\rho_{\Ra \{f\ge t\}}(u)^{\alpha}d\mu_f(t)du.
\end{equation}

This motivates the functional extensions of radial mean bodies defined below. We adopt the notation $\Raf$ in accordance with Haddad and Ludwig~\cite{HL3}, where related objects were introduced in the context of fractional $L^2$ polar projection bodies and affine Hardy–Littlewood–Sobolev inequalities.

\begin{defn}\label{rdm-f}
    Let $f\in L^1(\rn)$ be a non-zero, log-concave function. For $\alpha>-1$ and $\alpha\neq 0$, 
    \[\rho_{\Raf}(u)^{\alpha}=\int_0^{\|f\|_{\infty}}\rho_{\Ra\{f\ge t\}}(u)^{\alpha}d\mu_f(t),\]
    and for $\alpha =0$,
    \[\log\rho_{\rR_0f}(u)=\int_0^{\|f\|_{\infty}}\log\rho_{\rR_0\{f\ge t\}}(u)d\mu_f(t).\]
\end{defn}

We note that Langharst, Mar\'in Sola and Ulivelli~\cite{LSU} first developed a higher-order functional theory of radial mean bodies in the setting of upper semicontinuous, log-concave functions, which provides a general framework for such objects in that context. The present definition coincides with their construction, as shown in Proposition~\ref{ex-rdm}.

In this section, we aim to extend the scope of the theory to a broader class of functions in $L^1(\rn)$, with particular emphasis on the body $\rR_0 f$, which plays a central role in Theorem~C and its geometric interpretation. Unlike the higher-order setting of~\cite{LSU}, the level-set representation adopted here allows a more direct treatment of this quantity.

\begin{prop}\label{ex-rdm}
    Let $f\in L^1(\rn)$ be non-zero, log-concave. Then 
    \begin{equation}\label{fppb}
        \rho_{\Raf}(u)^{\alpha}=\frac{|\alpha|}{2\|f\|_1}\int_0^{\infty}r^{\alpha-1}\int_{\rn}|f(x)-f(x+ru)|dxdr
    \end{equation}
    for $\alpha\in(-1,0)$ and
    \begin{equation}\label{laf}
        \rho_{\Raf}(u)^{\alpha}=\frac{\alpha}{\|f\|_1}\int_0^{\infty}r^{\alpha-1}\int_{\rn}\min \{f(x), f(x+ru)\}dxdr
    \end{equation}
    for $\alpha>0$.
\end{prop}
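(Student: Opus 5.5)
The approach is to start from Definition \ref{rdm-f}, insert the set-level formulas of Lemma \ref{rdm-covariogram} for each superlevel set, and then collapse the $t$-integral with the layer cake formula via Fubini--Tonelli.

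Write $K_t=\{f\ge t\}$ for $0<t<\|f\|_\infty$. Since $f$ is log-concave and integrable, each $K_t$ is convex and bounded; by \eqref{inclusion} in the proof of Lemma \ref{finiteness} it is contained in the ball $B_t$. For almost every $t$ the set $K_t$ has nonempty interior, hence is a convex body. (If $f$ is degenerate, i.e.\ supported on a lower-dimensional set, then $f=0$ a.e., which is excluded since $f$ is non-zero.) Since $d\mu_f(t)=|K_t|\,dt/\|f\|_1$, the factor $1/|K_t|$ in Lemma \ref{rdm-covariogram} cancels. For $\alpha>0$ this gives
\[\rho_{\Raf}(u)^\alpha=\frac{\alpha}{\|f\|_1}\int_0^{\|f\|_\infty}\int_0^\infty r^{\alpha-1}|K_t\cap(K_t+ru)|\,dr\,dt .\]
For $\alpha\in(-1,0)$ the same computation gives the analogous formula with factor $\frac{|\alpha|}{2\|f\|_1}$ and $|K_t\triangle(K_t+ru)|$.

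Next I will use pointwise layer cake identities. Since $\chi_{K_t+ru}(x)=\chi_{K_t}(x-ru)$, we have
\[\int_0^\infty \chi_{K_t}(x)\chi_{K_t}(x-ru)\,dt=\min\{f(x),f(x-ru)\}\]
and
\[\int_0^\infty |\chi_{K_t}(x)-\chi_{K_t}(x-ru)|\,dt=|f(x)-f(x-ru)|.\]
The second identity holds because for $t$ between the two values exactly one indicator equals $1$, and otherwise the indicators agree. Integrating in $x$ and substituting $x\mapsto x+ru$ turns $f(x-ru)$ into $f(x+ru)$ in both cases. All integrands are non-negative, so Tonelli's theorem lets me swap the $t$-, $x$- and $r$-integrations freely. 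This yields \eqref{fppb} and \eqref{laf}, with both sides possibly infinite.

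The only real obstacle is justifying the pointwise use of Lemma \ref{rdm-covariogram}, which is stated for convex bodies. It applies for a.e.\ $t$ as above, and the null set of $t$ with $|K_t|=0$ carries no $\mu_f$-mass, so it is harmless. Finiteness is not needed for the identity itself, because Tonelli's theorem handles $+\infty$. Still, Lemma \ref{finiteness} combined with Lemma \ref{chord-RaK}, which gives $\int_{\sn}\rho^\alpha\,du$ in terms of $I_{\alpha+1}$, shows that both sides are finite after integrating over $u$. In particular $\rho_{\Raf}$ is finite for a.e.\ $u$, so the definition makes sense.
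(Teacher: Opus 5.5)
Your proposal is correct and is essentially the paper's proof: you substitute Lemma \ref{rdm-covariogram} into Definition \ref{rdm-f}, cancel $|\{f\ge t\}|$ against the density of $\mu_f$, and collapse the $t$-integral using the layer-cake identities for $\min\{f(x),f(y)\}$ and $|f(x)-f(y)|$ together with Tonelli's theorem. One small correction to your extra bookkeeping: $\{f\ge t\}$ is a bounded convex set with nonempty interior for every $t<\|f\|_\infty$, not just almost every $t$, but it need not be closed unless $f$ is upper semicontinuous, so you should apply Lemma \ref{rdm-covariogram} to its closure, which leaves every quantity in the identity unchanged.
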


\begin{proof}
   %For simplicity of notation, we denote by $F_t:=\{f\ge t\}$. 
   First, we consider $\alpha\in (-1,0)$. By Definition \ref{rdm-f}, we have
   \begin{equation*}
       \rho_{\Raf}(u)^{\alpha}=\int_0^{\|f\|_\infty}\rho_{\Ra \{f\ge t\} }(u)^{\alpha}d\mu_f(t),
   \end{equation*}
   where $d\mu_f(t)=\frac{|\{f\ge t\}|}{\|f\|_1}dt$.

   It follows from Lemma \ref{rdm-covariogram} that
    \begin{equation*}
        \begin{aligned}
            %\rho_{\Raf}(u)^{\alpha}&=
            \int_0^{\|f\|_\infty}\rho_{\Ra \{f\ge t\} }(u)^{\alpha}d\mu_f(t)
            &=\frac{|\alpha|}{2\|f\|_1}\int_0^{{\|f\|_\infty}}
            \int_0^{\infty}r^{\alpha-1}| \{f\ge t\} \triangle(\{f\ge t\}+ru)|drdt\\
            &=\frac{|\alpha|}{\|f\|_1}\int_0^{\infty}r^{\alpha-1}\int_0^{\|f\|_\infty}\int_{\rn}|\chi_{ \{f\ge t\} }(x)-\chi_{ \{f\ge t\}}(x+ru)|dxdtdr\\
            &=\frac{|\alpha|}{2\|f\|_1}\int_0^{\infty}r^{\alpha-1}\int_{\rn}|f(x)-f(x+ru)|dxdr,
        \end{aligned}
    \end{equation*}
    which proves \eqref{fppb}.

    For $\alpha>0$, by Definition \ref{rdm-f} and Lemma \ref{rdm-covariogram}, we obtain
    \begin{equation*}
        \begin{aligned}
            \rho_{\Raf}(u)^{\alpha}&=\int_0^{\|f\|_{\infty}}\rho_{\Ra \{f\ge t\} }(u)^{\alpha}d\mu_f(t)\\
            &=\frac{\alpha}{\|f\|_1}\int_0^{\|f\|_{\infty}}\int_0^{\infty}r^{\alpha-1}|\{f\ge t\}\cap (\{f\ge t\}+ru)|drdt\\
            &=\frac{\alpha}{\|f\|_1}\int_0^{\infty}r^{\alpha-1}\int_{\rn}\min\{f(x), f(x+ru)\}dxdr,
        \end{aligned}
    \end{equation*}
    which concludes the proof.
\end{proof}

For $\alpha\in(-1,0)$, Remark~\ref{rem-ex} shows that $I_{\alpha+1}(f)$ is finite for non-negative, non-zero $f\in W^{-\alpha ,1}(\rn)$. By \eqref{If-Raf}, it follows that $\rho_{\Raf}(u)$ is finite for almost all $u$. Moreover, Haddad and Ludwig~\cite[Proposition 2]{HL1} proved that $\Ra f$ is a star body in this case.

For $\alpha\in (0,n)$, the chord Sobolev inequality \eqref{chord-Sob} implies that $I_{\alpha+1}(f)$ is finite for non-negative, non-zero $f\in L^{\frac{n}{n+\alpha}}(\rn)$. Hence $\rho_{\Raf}(u)$ is finite for almost all $u$.

We now turn to the case $\alpha=0$. Unlike the case $\alpha\neq0$, the definition does not directly yield an explicit representation of $\rR_0f$. We begin with the following proposition, which provides an explicit expression for $\rho_{\rR_0 K}$. This formula can also be obtained from \cite[Lemma 16]{HL3} under slightly different assumptions; we include the proof for completeness.

\begin{prop}\label{R0K-explicit}
    Let $K\subset\rn$ be a convex body. For $u\in\sn$,
    \[\log\rho_{\rR_0 K}(u)=-\gamma+\int_0^{\infty}\frac{1}{r}\Big(\frac{|K\cap (K+ru)|}{|K|}-e^{-r}\Big)dr,\]
    where $\gamma=-\Gamma^{\prime}(1)$ is the Euler constant.
\end{prop}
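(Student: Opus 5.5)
The plan is to reduce everything to a one-dimensional identity for each chord, using the covariogram $g_K(r)=|K\cap(K+ru)|$. By definition \eqref{R0K}, $\log\rho_{\rR_0K}(u)=\frac{1}{|K|}\int_K\log\rho_{K-x}(u)\,dx$. As in the proof of Lemma \ref{chord-RaK}, I will slice $K$ along lines parallel to $u$. For $y\in K|u^\perp$ the chord has length $X=X_K(y,u)$, and a point at position $s\in[s_1,s_2]$ has $\rho_{K-x}(u)=s_2-s$. Writing $s_2-s=\tau\in(0,X)$, this gives
\[
\log\rho_{\rR_0K}(u)=\frac{1}{|K|}\int_{K|u^\perp}\int_0^{X_K(y,u)}\log\tau\,d\tau\,d\tH^{n-1}(y).
\]

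The key step is a Frullani-type representation of $\log\tau$ for $\tau>0$:
\[
\log\tau=-\gamma+\int_0^\infty\frac{1}{r}\big(\chi_{\{r<\tau\}}-e^{-r}\big)\,dr .
\]
To verify it, split the integral at $r=\tau$, rewrite it as $\int_0^\tau\frac{1-e^{-r}}{r}dr-\int_\tau^\infty\frac{e^{-r}}{r}dr$, and use the classical identity $\int_0^1\frac{1-e^{-r}}{r}dr-\int_1^\infty\frac{e^{-r}}{r}dr=\gamma$. Differentiating in $\tau$ gives $1/\tau$ on both sides, which fixes the $\tau$-dependence, and the value at $\tau=1$ gives the constant. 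The sign of the constant should be checked against $\Gamma'(1)=\int_0^\infty e^{-r}\log r\,dr=-\gamma$.

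Next I substitute this representation into the chord integral and interchange the integrals. The inner average becomes
\[
\frac{1}{|K|}\int_{K|u^\perp}\int_0^X\chi_{\{r<\tau\}}\,d\tau\,d\tH^{n-1}(y)=\frac{1}{|K|}\int_{K|u^\perp}(X-r)_+\,d\tH^{n-1}(y)=\frac{|K\cap(K+ru)|}{|K|}.
\]
The last equality uses the fact that the covariogram is obtained by integrating $(X_K(y,u)-r)_+$ over the projection, since chords are segments because $K$ is convex. The $e^{-r}$ term integrates to $e^{-r}\cdot\frac{1}{|K|}\int_{K|u^\perp}X\,d\tH^{n-1}(y)=e^{-r}$. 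Combining these gives the claimed formula.

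The main obstacle is justifying the interchange with Fubini, because the integrand $\frac{1}{r}(\chi_{\{r<\tau\}}-e^{-r})$ is not absolutely integrable jointly in a naive way. I will split the $r$-integral at $r=1$. For $r\ge1$ the integrand is bounded by $\frac{1}{r}(\chi_{\{r<\tau\}}+e^{-r})$, which is integrable over the bounded set $\{\tau<\operatorname{diam}K\}$. For $r<1$ I will write $\chi_{\{r<\tau\}}-e^{-r}=(1-e^{-r})-\chi_{\{r\ge\tau\}}$. The first term is $O(1)$ after division by $r$, and the second gives $\int_{\tau}^1\frac{dr}{r}=\log(1/\tau)_+$, which is integrable in $\tau$ near $0$. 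So each piece is absolutely integrable and Fubini applies piecewise. This also shows that both sides are finite for every $u$.
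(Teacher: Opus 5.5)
Your proof is correct, but it takes a different route from the paper.

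The paper never computes from the definition \eqref{R0K} directly. It starts from the covariogram formula \eqref{rdm0} for $\alpha>0$ and divides by $\Gamma(\alpha+1)=\alpha\int_0^\infty r^{\alpha-1}e^{-r}\,dr$. This turns $\rho_{\Ra K}(u)\,\Gamma(\alpha+1)^{-1/\alpha}$ into a ratio of two Mellin-type integrals raised to the power $1/\alpha$. It then lets $\alpha\to0^+$, using:
\begin{itemize}
\item continuity of $\alpha\mapsto\rho_{\Ra K}(u)$ at $0$;
\item the limit $\Gamma(\alpha+1)^{1/\alpha}\to e^{\Gamma'(1)}=e^{-\gamma}$;
\item a splitting of both integrals at some $r_0$;
\item an elementary limit lemma for the resulting quotient.
\end{itemize}

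You instead work at $\alpha=0$ itself. You slice along chords as in Lemma \ref{chord-RaK}, replace $\log\tau$ by its Frullani-type representation, and interchange integrals. This is the $\alpha=0$ analogue of the layer-cake computation behind Lemma \ref{rdm-covariogram}, rather than a limit of it.

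What your route gains is self-containedness. It does not need the continuity of $\rho_{\Ra K}(u)$ in $\alpha$, which the paper takes for granted. Nor does it need continuity at $\alpha=0$ of the truncated pieces $\int_0^{r_0}r^{\alpha-1}(1-g_K(r))\,dr$, which implicitly uses $1-g_K(r)=O(r)$. It also yields finiteness of both sides for every $u$.

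Your estimates actually prove more than you claim. On $r<1$ the integrand is dominated by $1+r^{-1}\chi_{\{\tau\le r\}}$, and on $r\ge 1$ by $r^{-1}(\chi_{\{r<\tau\}}+e^{-r})$. So it is jointly absolutely integrable, and one application of Fubini--Tonelli suffices; no piecewise argument is needed.

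You could also skip the slicing entirely. Up to a null set, $\{x\in K:\rho_{K-x}(u)>r\}=K\cap(K-ru)$, and this set has the same volume as $K\cap(K+ru)$.

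What the paper's route gains is conceptual. It exhibits $\rR_0K$ as the limit of the family $\Ra K$ and explains the constant $-\gamma$ as $\Gamma'(1)$ coming from the normalization $\Gamma(\alpha+1)^{1/\alpha}$. This matches how the sharp constants at $\alpha=0$ are later obtained as $\alpha$-derivatives.
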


\begin{proof}
    For $u\in\sn$, we let $g_K(r)=|K\cap (K+ru)|/|K|$ and $\omega(r)=e^{-r}$. By Proposition \ref{ex-rdm}, we have
    \begin{equation}
        \frac{\rho_{\Ra K}(u)}{\Gamma(\alpha+1)^{1/\alpha}}=\Big(\frac{\int_0^{\infty}r^{\alpha-1}g_K(r)dr}{\int_0^{\infty}r^{\alpha-1}\omega(r)dr}\Big)^{1/\alpha}
    \end{equation}
    for $\alpha>0$. Since $\rho_{\Ra K}(u)$ and $\Gamma(\alpha+1)^{1/\alpha}$ are continuous functions with respect to $\alpha$ over $(-1,\infty)$, we have
    \[e^{\gamma}\rho_{\rR_0 K}(u)=\lim_{\alpha\rightarrow 0^+}\Big(\frac{\int_0^{\infty}r^{\alpha-1}g_K(r)dr}{\int_0^{\infty}r^{\alpha-1}\omega(r)dr}\Big)^{1/\alpha}\]

    Moreover, we have
    \begin{equation}\label{omega}
        \int_0^{\infty} r^{\alpha-1}\omega(r)dr=\int_{r_0}^{\infty}r^{\alpha-1}\omega(r)dr+\int_{0}^{r_0}r^{\alpha-1}(1-\omega(r))dr+\frac{r_0^{\alpha}}{\alpha}
    \end{equation}
    and
    \begin{equation}\label{g}
        \int_0^{\infty} r^{\alpha-1}g_K(r)dr=\int_{r_0}^{\infty}r^{\alpha-1}g_K(r)dr+\int_{0}^{r_0}r^{\alpha-1}(1-g_K(r))dr+\frac{r_0^{\alpha}}{\alpha}.
    \end{equation}

    By \eqref{omega} and \eqref{g}, and the elementary relation
    \[\lim_{\alpha\rightarrow 0^+}\bigg(\frac{a(\alpha)+\frac{c^{\alpha}}{\alpha}}{b(\alpha)+\frac{c^{\alpha}}{\alpha}}\bigg)^{1/\alpha}=\exp{(a(0)-b(0))},\]
    which is valid for a constant $c\neq 0$ and continuous functions $\alpha\mapsto a(\alpha)$ and $\alpha\mapsto b(\alpha)$, we have
    \[e^{\gamma}\rho_{\rR_0 K}(u)=\exp\Big(\int_0^{\infty}\frac{g_K(r)-\omega(r)}{r}dr\Big),\]
    which concludes the proof.
\end{proof}

Using the above result, we can now derive an explicit representation for $\rR_0 f$.

\begin{lem}
    Let $f\in L^1(\rn)$ be non-zero, log-concave. For $u\in\sn$,
    \begin{equation}\label{log-R0}
        \log \rho_{\rR_0f}(u)=-\gamma+\int_0^{\infty}\frac{1}{r}\Big(\frac{1}{\|f\|_1}\int_{\rn}\min\{f(x), f(x+ru)\}dx-e^{-r}\Big)dr.
    \end{equation}
\end{lem}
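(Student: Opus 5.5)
The plan is to start from Definition \ref{rdm-f} at $\alpha=0$, insert the explicit formula of Proposition \ref{R0K-explicit} for each level set, and then exchange the order of integration. The exchange is what turns level-set covariograms into the functional quantity $\int\min\{f(x),f(x+ru)\}dx$. Write $K_t=\{f\ge t\}$. For $0<t<\|f\|_\infty$ the set $K_t$ is convex, bounded by \eqref{inclusion}, and has positive measure for almost every such $t$, so it is a convex body up to null sets. Proposition \ref{R0K-explicit} then gives
\[
\log\rho_{\rR_0f}(u)=\int_0^{\|f\|_\infty}\Big(-\gamma+\int_0^\infty\frac1r\Big(\frac{|K_t\cap(K_t+ru)|}{|K_t|}-e^{-r}\Big)dr\Big)d\mu_f(t).
\]
Since $\mu_f$ is a probability measure, the constant $-\gamma$ comes out unchanged. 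Multiplying by the density $|K_t|/\|f\|_1$ cancels the normalization by $|K_t|$, and the remaining term becomes
\[
\frac1{\|f\|_1}\int_0^{\|f\|_\infty}\int_0^\infty\frac1r\Big(|K_t\cap(K_t+ru)|-|K_t|e^{-r}\Big)dr\,dt.
\]
If Fubini applies, we use $\int_0^{\|f\|_\infty}|K_t\cap(K_t+ru)|dt=\int\min\{f(x),f(x+ru)\}dx$, which is the layer-cake identity already used in Proposition \ref{ex-rdm}, together with $\int_0^{\|f\|_\infty}|K_t|dt=\|f\|_1$. These two identities give \eqref{log-R0} directly.

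The main obstacle is justifying the interchange of $dt$ and $dr$. The integrand $\frac1r\big(|K_t\cap(K_t+ru)|-|K_t|e^{-r}\big)$ changes sign, and near $r=0$ it is only conditionally small, so absolute integrability has to be checked. I would split at $r=1$.

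For $r\ge1$, the integrand is bounded in absolute value by $\frac1r|K_t\cap(K_t+ru)|+|K_t|e^{-r}$. Both pieces are nonnegative, so Tonelli applies to them separately. The second piece integrates to at most $\|f\|_1$. The first gives
\[
\int_1^\infty\frac1r\int_{\rn}\min\{f(x),f(x+ru)\}dx\,dr\le\int_1^\infty\frac1r\,\rho_{K_t}\text{-type bounds},
\]
which is finite because $|K_t\cap(K_t+ru)|=0$ once $r$ exceeds the width of $K_t$ in direction $u$. That width is at most $2(-\log t-b)/a$ by \eqref{inclusion}, so the $r$-integral is bounded by $|K_t|\log^+\!\big(2(-\log t-b)/a\big)$. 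This is integrable in $t$, since $|K_t|\lesssim(-\log t-b)^n$ and these polynomial-in-$\log t$ factors are integrable on $(0,\|f\|_\infty)$, as in Lemma \ref{finiteness}.

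For $0<r<1$, I would write the integrand as $\frac1r\big(|K_t\cap(K_t+ru)|-|K_t|\big)+|K_t|\frac{1-e^{-r}}{r}$. The second term is bounded by $|K_t|$. For the first term, convexity gives the covariogram estimate
\[
|K_t|-|K_t\cap(K_t+ru)|\le r\,\tH^{n-1}(K_t|u^\perp),
\]
obtained by integrating $\min\{X_{K_t},r\}\le r$ over the projection. By the argument of Lemma \ref{finiteness} via \eqref{inclusion}, the projection $K_t|u^\perp$ has $(n-1)$-measure at most $\omega_{n-1}\big((-\log t-b)/a\big)^{n-1}$, and this is integrable in $t$ over $(0,\|f\|_\infty)$. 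So the full integrand is absolutely integrable on $(0,\|f\|_\infty)\times(0,\infty)$, Fubini applies, and the formula follows.

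Null-measure level sets contribute zero weight in $\mu_f$, so the fact that Proposition \ref{R0K-explicit} does not apply to them causes no issue.
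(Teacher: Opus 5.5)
Your proposal is correct and follows the same route as the paper: insert Proposition \ref{R0K-explicit} into Definition \ref{rdm-f}, let the density $|\{f\ge t\}|/\|f\|_1$ of $\mu_f$ cancel the normalization, interchange the $t$- and $r$-integrals, and apply the layer-cake identity for $\int_{\rn}\min\{f(x),f(x+ru)\}\,dx$. The paper only says ``it follows from Fubini's theorem,'' so your absolute-integrability check fills in a step it leaves implicit: you split at $r=1$, bound the $r\ge1$ part by $|K_t|\log^+$ of the width of $K_t$, and bound the $r<1$ part by $\tH^{n-1}(K_t|u^\perp)+|K_t|$, with both controlled via \eqref{inclusion}. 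The one thing to fix is the garbled display for $r\ge1$, which mixes the functional quantity with per-level bounds; replace it with the per-$t$ estimate you state right after it.
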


\begin{proof}
    By Proposition \ref{R0K-explicit} and Definition \ref{rdm-f},
    \begin{equation*}
        \begin{aligned}
            \log\rho_{\rR_0 f}(u)&=\int_0^{\|f\|_{\infty}}\log \rho_{\rR_0 \{f\ge t\}}(u)d\mu_f(t)\\
            &=\int_0^{\|f\|_{\infty}}\frac{|\{f\ge t\}|}{\|f\|_1}\Big(-\gamma +\int_0^{\infty}\frac{1}{r}\Big(\frac{|\{f\ge t\}\cap (\{f\ge t\}+ru)|}{|\{f\ge t\}|}-e^{-r}\Big)dr\Big)dt.
        \end{aligned}
    \end{equation*}
    It follows from Fubini's theorem that
    \begin{equation*}
    \begin{aligned}
        \log{\rho_{\rR_0 f}}(u)&=-\gamma+\int_0^{\infty}\frac{1}{r}\Big(\frac{1}{\|f\|_1}\int_0^{\|f\|_{\infty}}|\{f\ge t\}\cap (\{f\ge t\}+ru)|dt-e^{-r}\Big)dr\\
        &=-\gamma+\int_0^{\infty}\frac{1}{r}\Big(\frac{1}{\|f\|_1}\int_{\rn}\min\{f(x), f(x+ru)\}dx-e^{-r}\Big)dr,
    \end{aligned}
    \end{equation*}
    which completes the proof.
\end{proof}

We conclude this section by establishing that the expression in \eqref{log-R0} extends to a broader class of functions. While it is initially defined for log-concave, integrable functions, we show that it remains well-defined for non-negative, non-zero functions $f\in W^{\beta,1}(\rn)$, where $\beta\in(0,1)$ is fixed.

Before proceeding, we establish a preliminary lemma, whose notation and conclusion will also be used in Section \ref{log Sob ineq}.

\begin{lem}\label{E-def}
    Let $\beta\in(0,1)$ and $f\in W^{\beta,1}(\rn)$ be a non-negative, non-zero function. Then
    \begin{equation*}
        \mathcal{E}_u(f):=\int_0^1\frac{1}{r}\int_{\rn}|f(x)-f(x+ru)|dxdr
    \end{equation*}
    is finite for all $u\in\sn$ and integrable over $\sn$.
\end{lem}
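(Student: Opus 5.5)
The plan is to use integrability over $\sn$ to obtain finiteness, and then to handle the claim that finiteness holds for \emph{every} $u$ separately. Write $\omega(u,r)=\int_{\rn}|f(x)-f(x+ru)|dx$ for the $L^1$-modulus of continuity in direction $u$.

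\textbf{Integrability over $\sn$.} Since $r\le 1$ and $\beta\in(0,1)$, we have $r^{-1}\le r^{-1-\beta}$. Integrating in polar coordinates $h=ru$ gives
\[
\int_{\sn}\mathcal{E}_u(f)\,du\le\int_{\sn}\int_0^1 r^{-1-\beta}\omega(u,r)\,dr\,du
=\int_{|h|\le 1}\frac{\|f(\cdot+h)-f\|_1}{|h|^{n+\beta}}\,dh ,
\]
because $dh=r^{n-1}dr\,du$ and $r^{-1-\beta}=r^{n-1}\cdot r^{-n-\beta}$. After the substitution $y=x+h$, the right-hand side is at most the $W^{\beta,1}$ seminorm $\di |f(x)-f(y)||x-y|^{-n-\beta}dxdy<\infty$. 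Hence $u\mapsto \mathcal{E}_u(f)$ is integrable and finite for almost every $u$.

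\textbf{Finiteness for every $u$.} Here I would avoid fixing a direction and instead average, using the triangle inequality on translations. For fixed $u$ and $r\in(0,1]$, choose any $v$ and split $ru=(ru-sv)+sv$:
\[
\omega(u,r)\le\|f(\cdot+ru-sv)-f\|_1+\|f(\cdot+sv)-f\|_1 .
\]
A cleaner version averages over a ball. For $r\in(0,1]$ and $z\in B_r:=rB^n$,
\[
\omega(u,r)\le \|f(\cdot+ru)-f(\cdot+z)\|_1+\|f(\cdot+z)-f\|_1=\|f(\cdot+ru-z)-f\|_1+\|f(\cdot+z)-f\|_1 .
\]
Average this over $z\in B_r$. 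Both resulting terms are averages of $\|f(\cdot+h)-f\|_1$ over a ball of radius $r$, with $|h|\le 2r$. Each term is therefore at most
\[
\frac{1}{\omega_n r^n}\int_{|h|\le 2r}\|f(\cdot+h)-f\|_1\,dh\le \frac{(2r)^{n+\beta}}{\omega_n r^{n}}\int_{|h|\le 2r}\frac{\|f(\cdot+h)-f\|_1}{|h|^{n+\beta}}\,dh\le C r^{\beta}\,|f|_{W^{\beta,1}} .
\]
This gives $\omega(u,r)\le C_{n,\beta}\, r^{\beta}|f|_{W^{\beta,1}}$ uniformly in $u$. Then
\[
\mathcal{E}_u(f)\le C\int_0^1 r^{\beta-1}\,dr<\infty
\]
for every $u$, with a bound independent of $u$. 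This uniform bound also gives integrability over $\sn$ directly, so the first step becomes only a consistency check.

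\textbf{Measurability.} The joint measurability of $(u,r)\mapsto\omega(u,r)$ follows from the continuity of translation in $L^1$, so Tonelli's theorem applies.

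\textbf{Main obstacle.} The main obstacle is the pointwise-in-$u$ statement: the naive polar-coordinate bound gives finiteness only almost everywhere. The averaging trick, which converts a single direction into a ball average to get an $r^\beta$ modulus bound, is the step I would rely on to resolve it.
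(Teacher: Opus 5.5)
Your proof is correct, but it gets finiteness for every $u\in\sn$ differently from the paper. The paper also starts from $r^{-1}\le r^{-\beta-1}$ on $(0,1)$. It then extends the $r$-integral to $(0,\infty)$ and identifies it, via Proposition~\ref{ex-rdm}, as $\frac{2\|f\|_1}{\beta}\rho_{\rR_{-\beta}f}(u)^{-\beta}$. Finiteness for every $u$ is then imported from Haddad and Ludwig's result that $\rR_{-\beta}f$ is a star body, and integrability over $\sn$ comes from the same polar-coordinate computation you do.

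You instead average the triangle inequality over $z\in rB^n$. This proves directly the uniform modulus-of-continuity estimate
\[
\sup_{u\in\sn}\int_{\rn}|f(x)-f(x+ru)|\,dx\le \frac{2^{n+\beta+1}}{\omega_n}\,r^{\beta}\di\frac{|f(x)-f(y)|}{|x-y|^{n+\beta}}\,dx\,dy ,
\]
which bounds $\mathcal{E}_u(f)$ uniformly in $u$.

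What each route buys:
\begin{itemize}
\item Your argument is self-contained and slightly stronger. It shows $u\mapsto\mathcal{E}_u(f)$ is bounded on $\sn$, so integrability is immediate and your polar-coordinate step becomes a consistency check.
\item Combined with the trivial tail bound $\int_1^\infty r^{-\beta-1}\|f(\cdot+ru)-f\|_1\,dr\le 2\|f\|_1/\beta$, your estimate even reproves that $\rho_{\rR_{-\beta}f}(u)^{-\beta}$ is finite for every $u$.
\item The paper's route is shorter and links $\mathcal{E}_u(f)$ to the radial mean body framework used later, at the cost of relying on an external result.
\end{itemize}

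The half-sentence about splitting $ru=(ru-sv)+sv$ is an abandoned attempt and can be deleted.
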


\begin{proof}
    Since $\beta\in(0,1)$, we have
    \begin{equation}\label{E-finite}
    \begin{aligned}
        \mathcal{E}_u(f)&\le \int_0^1 r^{-\beta-1}\int_{\rn}|f(x)-f(x+ru)|dxdr\\
        &\le \int_0^{\infty}r^{-\beta-1}\int_{\rn}|f(x)-f(x+ru)|dxdr=\frac{2\|f\|_1}{\beta}\rho_{\rR_{-\beta}f}(u)^{-\beta},
    \end{aligned}
    \end{equation}
    where the last equality is due to Proposition \ref{ex-rdm}. Haddad and Ludwig \cite[Proposition]{HL1} proved that $\rR_{-\beta}f$ is a star body, which shows that $\mathcal{E}_u(f)$ is finite for all $u\in\sn$. 

    Moreover, since $f\in W^{\beta,1}(\rn)$, inequality \eqref{E-finite} implies that
    \begin{equation}\label{E-integrable}
        \begin{aligned}
            \int_{\sn}\mathcal{E}_u(f)du&\leq \int_{\sn}\int_{0}^{\infty}r^{-\beta-1}\int_{\rn}|f(x)-f(x+ru)|dxdrdu\\
            &=\di \frac{|f(x)-f(y)|}{|x-y|^{n+\beta}}dxdy<\infty,
        \end{aligned}
    \end{equation}
    which concludes the proof.
\end{proof}

\begin{lem}\label{Q-def}
    Let $f\in L^{\frac{1}{2}}(\rn) \cap L^1(\rn)$ be non-negative and non-zero. Then
    \begin{equation*}
        \mathcal{Q}_u(f):=\int_1^{\infty}\frac{1}{r}\int_{\rn}\min\{f(x), f(x+ru)\}dxdr
    \end{equation*}
    is finite for almost all $u\in\sn$ and integrable over $\sn$.
\end{lem}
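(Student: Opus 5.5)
Since $\mathcal{Q}_u(f)$ has a non-negative integrand, the plan is to integrate it over $\sn$, rewrite the result as a double integral on $\rn\times\rn$, and bound that by Theorem A. Integrability over $\sn$ then follows at once, and finiteness for almost every $u$ is a consequence of integrability.

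\emph{Step 1: pass to a double integral.} By Tonelli's theorem and polar coordinates $y=x+ru$, $dy=r^{n-1}dr\,du$, we get
\[
\int_{\sn}\mathcal{Q}_u(f)\,du=\int_{\rn}\int_{\{|x-y|\ge 1\}}\frac{\min\{f(x),f(y)\}}{|x-y|^{n}}\,dy\,dx .
\]

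\emph{Step 2: insert a positive exponent.} On the region $|x-y|\ge 1$ we have $|x-y|^{-n}\le |x-y|^{\alpha-n}$ for any $\alpha\in(0,n)$. Hence
\[
\int_{\sn}\mathcal{Q}_u(f)\,du\le \di\frac{\min\{f(x),f(y)\}}{|x-y|^{n-\alpha}}\,dx\,dy .
\]

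\emph{Step 3: apply Theorem A.} Choose $\alpha\in(0,n)$ with $\frac{n}{n+\alpha}\ge \frac12$; for example $\alpha=\min\{1,n/2\}$ works, since then $n+\alpha\le 2n$. Because $f\in L^{\frac12}(\rn)\cap L^1(\rn)$ and $\frac12\le \frac{n}{n+\alpha}\le 1$, interpolation (Hölder, or log-convexity of $p\mapsto \int f^p$) gives $f\in L^{\frac{n}{n+\alpha}}(\rn)$. This is the same observation used in the proof of Lemma~\ref{I1}, via Lemma~\ref{conv-Lp}. Theorem A then bounds the right-hand side in Step 2 by $\sigma_{n,\alpha}\|f\|_{\frac{n}{n+\alpha}}<\infty$.

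\emph{Step 4: conclude.} Steps 1 to 3 show that $u\mapsto\mathcal{Q}_u(f)$ is a non-negative measurable function with finite integral over $\sn$. It is therefore integrable, and finite for almost all $u\in\sn$.

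There is no real obstacle here. The only points that need care are verifying that $f\in L^{n/(n+\alpha)}(\rn)$ for the chosen $\alpha$, and justifying the use of Tonelli through non-negativity of the integrand. The non-negativity also ensures that all manipulations are legitimate even before finiteness is known.
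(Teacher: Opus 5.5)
Your proposal is correct and follows the paper's proof: both bound $r^{-1}\le r^{\alpha-1}$ on $[1,\infty)$, which is the same as $|x-y|^{-n}\le|x-y|^{\alpha-n}$ on $\{|x-y|\ge 1\}$. Both then pass to the double integral via polar coordinates and apply Theorem A, using $f\in L^{\frac{n}{n+\alpha}}(\rn)$, which follows from $f\in L^{\frac12}(\rn)\cap L^1(\rn)$ as in Lemma~\ref{conv-Lp}. Your justification of that membership by interpolation is correct, and it also sidesteps the typo in the paper's exponent $\frac{\alpha'}{n+\alpha'}$, which should read $\frac{n}{n+\alpha'}$.
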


\begin{proof}
    By Lemma \ref{conv-Lp}, $f\in L^{\frac{\alpha^{\prime}}{n+\alpha^{\prime}}}(\rn)$ for some $\alpha^{\prime}\in (0,1)$. Then the chord Sobolev inequality \eqref{chord-Sob} implies that
    \begin{equation*}
        \begin{aligned}
            \int_{\sn} \mathcal{Q}_u(f)du&= \int_{\sn}\int_1^{\infty}\frac{1}{r}\int_{\rn}\min\{f(x), f(x+ru)\}dxdrdu\\
            &\leq \int_{\sn}\int_0^{\infty}r^{\alpha^{\prime}-1}\int_{\rn}\min\{f(x), f(x+ru)\}dxdrdu\\
            &=\di \frac{ \min\{f(x), f(y)\} }{|x-y|^{n-\alpha^{\prime}}}dxdy\leq \sigma_{n,\alpha^{\prime}}\|f\|_{\frac{n}{n+\alpha^{\prime}}}<\infty.
        \end{aligned}
    \end{equation*}
    This shows that $\mathcal{Q}_u(f)$ is integrable over $\sn$ and hence finite almost everywhere.
\end{proof}

\begin{lem}\label{ex-R0f}
    Let $\beta\in(0,1)$ be fixed, and $f\in W^{\beta,1}(\rn)$ be a non-negative, non-zero  function.
    Then
    \begin{equation}\label{log0-statement}
        \log\rho_{\rR_0f}(u)=-\gamma+\int_0^{\infty}\frac{1}{r}\Big(\frac{1}{\|f\|_1}\int_{\rn}\min\{f(x), f(x+ru)\}dx-e^{-r}\Big)dr
    \end{equation}
    is well-defined for all $u\in\sn$.

    If, in addition, $f\in L^{\frac12}(\rn)$, then $\log\rho_{\rR_0f}(u)$ is finite for almost all $u\in\sn$. Moreover, $\log\rho_{\rR_0f}\in L^1(\sn)$.
\end{lem}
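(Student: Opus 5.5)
Set $\|f\|_1=1$ by homogeneity. The integrand in \eqref{log0-statement} has no fixed sign, so the plan is to split the $r$-integral at $r=1$ and rewrite each piece in terms of the quantities $\mathcal{E}_u(f)$ and $\mathcal{Q}_u(f)$ from Lemmas \ref{E-def} and \ref{Q-def}.

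\emph{Near zero.} The key identity is
\[
\int_{\rn}\min\{f(x),f(x+ru)\}\,dx=\|f\|_1-\frac12\int_{\rn}|f(x)-f(x+ru)|\,dx,
\]
which follows from $\min\{a,b\}=\frac{a+b}{2}-\frac{|a-b|}{2}$ and translation invariance of Lebesgue measure. Using it, the piece over $(0,1)$ becomes
\[
\int_0^1\frac{1-e^{-r}}{r}\,dr-\frac{1}{2\|f\|_1}\,\mathcal{E}_u(f).
\]
The first term is a finite constant. By Lemma \ref{E-def}, $\mathcal{E}_u(f)$ is finite for every $u$. Hence this piece is a finite number for all $u\in\sn$.

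\emph{Near infinity.} On $(1,\infty)$ the integrand is $\frac1r\|f\|_1^{-1}\int\min\{f(x),f(x+ru)\}dx$, which is nonnegative, minus $e^{-r}/r$, which is integrable on $(1,\infty)$. So the tail equals
\[
\|f\|_1^{-1}\mathcal{Q}_u(f)-\int_1^\infty \frac{e^{-r}}{r}\,dr,
\]
which is a well-defined element of $(-\infty,\infty]$. Therefore $\log\rho_{\rR_0f}(u)$ is well defined in $(-\infty,+\infty]$ for every $u$, with no assumption beyond $f\in W^{\beta,1}(\rn)$. This proves the first claim.

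\emph{Under the extra assumption $f\in L^{\frac12}(\rn)$.} Lemma \ref{Q-def} gives that $\mathcal{Q}_u(f)$ is finite for almost every $u$ and integrable over $\sn$. Thus $\log\rho_{\rR_0f}(u)$ is finite almost everywhere. Moreover,
\[
|\log\rho_{\rR_0f}(u)|\le C+\frac{1}{2\|f\|_1}\,\mathcal{E}_u(f)+\frac{1}{\|f\|_1}\,\mathcal{Q}_u(f),
\]
where $C$ collects the constants $\gamma$, $\int_0^1\frac{1-e^{-r}}{r}dr$ and $\int_1^\infty\frac{e^{-r}}{r}dr$. Both $\mathcal{E}_u(f)$ and $\mathcal{Q}_u(f)$ are in $L^1(\sn)$ by the two lemmas, so $\log\rho_{\rR_0f}\in L^1(\sn)$.

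\emph{Measurability.} For integrability in $u$ we also need $u\mapsto\log\rho_{\rR_0f}(u)$ to be measurable. Apply Tonelli to the nonnegative integrands $r^{-1}|f(x)-f(x+ru)|$ and $r^{-1}\min\{f(x),f(x+ru)\}$, viewed as jointly measurable functions of $(x,r,u)$. This gives measurability of $\mathcal{E}_u(f)$ and $\mathcal{Q}_u(f)$ in $u$, and hence of $\log\rho_{\rR_0f}$.

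The only step needing care is the splitting itself. We must ensure that the decomposition does not produce an $\infty-\infty$ expression, which is why the rewriting near zero is done before splitting. Once that is in place, everything reduces to Lemmas \ref{E-def} and \ref{Q-def}.
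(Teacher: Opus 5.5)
Your proposal is correct and follows essentially the same route as the paper. You split the $r$-integral at $r=1$, use $\min\{a,b\}=\frac{a+b}{2}-\frac{|a-b|}{2}$ to write the $(0,1)$ piece as a constant minus $\frac{1}{2\|f\|_1}\mathcal{E}_u(f)$ and the $(1,\infty)$ piece as $\frac{1}{\|f\|_1}\mathcal{Q}_u(f)$ minus a constant, and then conclude from Lemmas \ref{E-def} and \ref{Q-def}. Your explicit measurability remark in $u$ is a detail the paper leaves implicit.
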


\begin{proof}
    Let $u\in\sn$ and $g(r,u)=\frac{1}{\|f\|_1}\int_{\rn}\min\{f(x), f(x+ru)\}dx$. By \eqref{log0-statement}, we have
    \begin{equation}\label{log0-expansion}
        \log\rho_{\rR_0f}(u)=C+\int_0^1\frac{g(r,u)-1}{r}dr+\int_1^{\infty}\frac{g(r,u)}{r}dr,
    \end{equation}
    where $C$ is a constant given by
    \[C:=-\gamma+\int_0^1\frac{1-e^{-r}}{r}dr-\int_1^{\infty}\frac{e^{-r}}{r}dr.\]

    We first consider the integral over $(0,1)$ in \eqref{log0-expansion}. Note that
%    \begin{equation*}
%        \begin{aligned}
%            \int_0^1\frac{g(r,u)-1}{r}dr&=-\frac{1}{\|f\|_1}\int_0^1\frac{1}{r}\int_{\rn}(\|f\|_1-\min\{f(x),f(x+ru)\})dxdr\\
%            &=-\frac{1}{2\|f\|_1}\int_0^1\frac{1}{r}\int_{\rn}\Big((f(x+ru)-f(x))_++(f(x)-f(x+ru))_+\Big)dxdr,
%        \end{aligned}
%    \end{equation*}
%    where $f(x)_+:=\max \{0, f(x)\}$ denotes the positive part of $f$. Thus we have
    \begin{equation}\label{gE}
        \int_0^1\frac{g(r,u)-1}{r}dr=-\frac{1}{2\|f\|_1}\int_0^1 \int_{\rn}|f(x)-f(x+ru)|dxdr=-\frac{1}{2\|f\|_1}\mathcal{E}_u(f),
    \end{equation}
    where $\mathcal{E}_u(f)$ is defined in Lemma \ref{E-def} and is finite for all $u\in\sn$. Therefore $\log\rho_{\rR_0f}(u)$ is well-defined as an extended number.

    Moreover, by \eqref{log0-expansion}, \eqref{gE} and the quantity $\mathcal{Q}_u(f)$ defined in Lemma \ref{Q-def}, we have
    \begin{equation}\label{0EQ}
        \log\rho_{\rR_0f}(u)=C-\frac{1}{2\|f\|_1}\mathcal{E}_u(f)+\frac{1}{\|f\|_1}\mathcal{Q}_u(f).
    \end{equation}
    Therefore, by Lemma \ref{E-def} and Lemma \ref{Q-def}, $\log\rho_{\rR_0f}(u)$ is finite almost everywhere and integrable over $\sn$, provided that $f\in W^{\beta,1}(\rn)\cap L^{\frac12}(\rn)$ is non-negative, non-zero.
\end{proof}

\section{Applications to entropies of random lines}\label{entropy}

In the geometric chord isoperimetric inequalities \eqref{iso-chord-011} and \eqref{iso-chord-1n+11}, the inequality degenerates to an identity at the endpoints $\alpha=0$ and $\alpha=n$. Motivated by this phenomenon, Xu \cite{Xu} and Zhang \cite{Zh3} introduced entropy inequalities to capture nontrivial information in these regimes. 

In this section, we consider functional extensions of the chord entropy inequalities arising from the endpoint behavior of chord Sobolev inequalities. In particular, Theorem~C extends a functional entropy inequality to a broader class of functions, which will be further developed in the next section.

By Lemma~\ref{conv-Lp} and Lemma~\ref{I1}, the chord Sobolev inequality \eqref{chord-Sob} degenerates to an identity as $\alpha\to 0^+$ for non-negative $f\in L^{\frac12}(\rn)\cap L^1(\rn)$. Moreover, Maz${}'$ya and Shaposhnikova~\cite{MS} established the limiting behavior as $\alpha\to 0^-$ for fractional Sobolev inequalities: for $f\in \bigcup_{-1<\alpha<0}W^{-\alpha,1}(\rn)$,
\begin{equation}\label{conv-MS}
    \lim_{\alpha\rightarrow 0^-}-\alpha\int_{\rn}\int_{\rn}\frac{|f(x)-f(y)|}{|x-y|^{n-\alpha}}dxdy
    = n\omega_n\|f\|_1.
\end{equation}
Thus, the fractional Sobolev inequality \eqref{frac-Sob} also reduces to an identity in the limit $\alpha\to 0^-$. 

This suggests considering the endpoint case $\alpha=0$, which leads to functional extensions of the chord entropy inequalities established by Xu \cite{Xu} and Zhang \cite{Zh3}. For $\alpha=n$, the chord Sobolev inequality reduces to a trivial inequality; nevertheless, in order to retain the parallel with the geometric theory, we also formulate the corresponding entropy inequality.

We begin by recalling the definition of the chord entropy of a convex body $K\subset\rn$. We follow \cite{Zh3} with a slightly different normalization.
\vskip 2pt

\noindent{\bf The {$\boldsymbol{(\alpha+1)}$}-th chord entropy:} For a convex body $K\subset\rn$ and $\alpha>-1$, the $(\alpha+1)$-th chord entropy of $K$ is defined by
\[\Ent_{\alpha+1}(K)=-\frac{\alpha+1}{I_{\alpha+1}(K)}\int_{K\cap l\neq \emptyset} |K\cap l|^{\alpha+1}\log |K\cap l|dl.\]

In this paper, we will only use the endpoint cases $\alpha=0$ and $\alpha=n$, namely, 
\[\Ent_1(K)=-\frac{1}{|K|}\int_{K\cap l\neq \emptyset}|K\cap l|\log |K\cap l|dl,\]
and
\[\Ent_{n+1}(K)=-\frac{\omega_n}{|K|^2}\int_{K\cap l\neq\emptyset}|K\cap l|^{n+1}\log |K\cap l|dl,\]
where the coefficients are from the Crofton's volume formula and the Poincar\'e-Hadwiger's integral formula in \eqref{CCPH}, respectively.

The chord entropy inequalities for $\Ent_1(K)$ and $\Ent_{n+1}(K)$ state that
\begin{equation}\label{1}
    \Ent_1(K)\ge \Ent_1(B_K)=\Ent_1(B^n)+\frac{1}{n}\log\Big(\frac{\omega_n}{|K|}\Big),
\end{equation}
and
\begin{equation}\label{n+1}
    \Ent_{n+1}(K)\leq \Ent_{n+1}(B_K)=\Ent_{n+1}(B^n)+\frac{n+1}{n}\log\Big(\frac{\omega_n}{|K|}\Big),
\end{equation}
where $B_K$ denotes the ball with the same volume as $K$. Both inequalities are sharp and equality holds if and only if $K=B_K$ (see \cite{Xu, Zh3}).

\subsection{The chord entropy inequality for \texorpdfstring{$\Ent_1(f)$}{}}\hspace{\fill}

We first extend the geometric chord entropy $\Ent_1(K)$ to non-zero, integrable, log-concave functions by defining
\begin{equation}\label{E1-def}
    \Ent_1(f)=\int_0^{\infty}\Ent_1(\{f\ge t\})d\mu_f(t)-\frac{1}{n\|f\|_1}\int_{\rn}f(x)\log f(x)dx.
\end{equation}
The second term coincides with the classical entropy of $f$ in information theory, which arises from the  limiting behavior of $\|f\|_{\frac{n}{n+\alpha}}$, as shown by  \eqref{ent f}. 

By \cite[Proposition I.2]{BobkovMadiman}, the entropy $\int_{\rn}f(x)\log f(x)dx$ is finite, provided that $f\in L^1(\rn)$ is log-concave. We aim to show that $\Ent_1(f)$ is finite for  such $f$. 

We need the following lemma established by Xu \cite[Theorem 3.2]{Xu}. Since our normalization differs slightly from that in \cite{Xu}, we provide a proof for completeness.

\begin{lem}\cite[Theorem 3.2]{Xu}\label{Xu-3.2}
    For a convex body $K$ in $\rn$,
    \[\Ent_1(K)=-1-\frac{1}{n\omega_n}\int_{\sn}\log\rho_{\rR_0 K}(u)du.\]
\end{lem}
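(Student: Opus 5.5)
The plan is to express both sides as integrals over lines and directions, and then differentiate the relation between chord power integrals and radial mean bodies at $\alpha=0$. By Lemma \ref{chord-RaK}, for every $\alpha>-1$,
\[
I_{\alpha+1}(K)=\frac{(\alpha+1)|K|}{n\omega_n}\int_{\sn}\rho_{\Ra K}(u)^{\alpha}du,
\]
and by \eqref{chord-Xray} we also have
\[
I_{\alpha+1}(K)=\frac{1}{n\omega_n}\int_{\sn}\int_{K|u^{\perp}}X_K(x,u)^{\alpha+1}d\tH^{n-1}(x)du.
\]
At $\alpha=0$ both expressions equal $|K|$ (Crofton). The $\alpha$-derivative of the line integral at $\alpha=0$ is $\int_{K\cap l\neq\emptyset}|K\cap l|\log|K\cap l|\,dl=-|K|\Ent_1(K)$. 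Rather than differentiating in $\alpha$, which would require justifying the interchange, I will work directly with the fiberwise formula.

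Fix $u\in\sn$. Take logarithms in \eqref{R0K}, and for $x\in K$ write $x=y+su$ with $y\in K|u^\perp$ and $s\in(s_1,s_2)$, as in the proof of Lemma \ref{chord-RaK}. Then $\rho_{K-x}(u)=s_2-s$, so
\[
\log\rho_{\rR_0K}(u)=\frac{1}{|K|}\int_{K|u^{\perp}}\int_{s_1}^{s_2}\log(s_2-s)\,ds\,d\tH^{n-1}(y).
\]
The inner integral is $\int_0^{X}\log t\,dt=X\log X-X$, where $X=X_K(y,u)$. Since $\int_{K|u^\perp}X_K(y,u)\,d\tH^{n-1}(y)=|K|$ by Fubini, we get
\[
\log\rho_{\rR_0K}(u)=-1+\frac{1}{|K|}\int_{K|u^{\perp}}X_K(y,u)\log X_K(y,u)\,d\tH^{n-1}(y).
\]

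Next integrate over $u\in\sn$ and divide by $n\omega_n$. By the normalization $dl=\frac{1}{n\omega_n}d\tH^{n-1}(x)du$, the double integral becomes $\frac{1}{|K|}\int_{K\cap l\neq\emptyset}|K\cap l|\log|K\cap l|\,dl=-\Ent_1(K)$, while the constant term gives $-\frac{1}{n\omega_n}\cdot n\omega_n=-1$. Hence
\[
\frac{1}{n\omega_n}\int_{\sn}\log\rho_{\rR_0K}(u)\,du=-1-\Ent_1(K),
\]
which rearranges to the claimed identity.

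The only delicate point is integrability, which is what justifies using Fubini and splitting off the constant. For a convex body, $X_K$ is bounded by $\operatorname{diam}K$, and $t\log t$ is bounded on $[0,\operatorname{diam}K]$, so every integrand above is bounded on a set of finite measure. Likewise, $\log(s_2-s)$ is integrable on each chord. No further obstacle is expected.
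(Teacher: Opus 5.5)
Your proof is correct and is essentially the paper's argument. You slice $K$ into chords parallel to $u$, evaluate each chord integral via $\int_0^X\log t\,dt=X\log X-X$ (the paper reads the same identity backwards as $X\log X=\int_0^X(\log s+1)\,ds$), and then integrate over $\sn$ using $dl=\frac{1}{n\omega_n}d\tH^{n-1}du$ and $\int_{K|u^\perp}X_K\,d\tH^{n-1}=|K|$. Your pointwise identification $\rho_{K-x}(u)=s_2-s$ is slightly more precise than the paper's $\log(s-s_0)$, which is really $\log\rho_{K-x}(-u)$ and only agrees after integrating over the sphere; your use of Fubini is also justified, since $\int_0^X|\log t|\,dt$ is bounded uniformly for $X\le\operatorname{diam}K$.
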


\begin{proof}
    By the definition of $\Ent_1(K)$, we have
    \begin{equation*}
        \begin{aligned}
            \Ent_1(K)&=-\frac{1}{|K|}\int_{\Aff}|K\cap l|\log |K\cap l|dl\\
            &=-\frac{1}{n\omega_n|K|}\int_{\sn}\int_{K|u^{\perp}}X_K(y,u)\log X_K(y,u)d\tH^{n-1}(y)du.
        \end{aligned}
    \end{equation*}
    Note that
    \begin{equation*}
        \begin{aligned}
            X_K(y,u)\log X_K(y,u)=\int_0^{X_K(y,u)}(\log s+1)ds=\int_{s_0}^{s_1}(\log (s-s_0)+1)ds,
        \end{aligned}
    \end{equation*}
    where $s_0=\min\{s: y+su\in K\}$ and $s_1=\max\{s: y+su\in K\}$. Hence we have
    \begin{equation*}
        \begin{aligned}
            \Ent_1(K)&=-\frac{1}{n\omega_n|K|}\int_{\sn}\int_{K|u^{\perp}}\int_{s_0}^{s_1}(\log (s-s_0) +1)dsd\tH^{n-1}(y)du\\
            &=-\frac{1}{n\omega_n|K|}\int_{\sn}\int_K\log\rho_{K-x}(u)dxdu-1=-\frac{1}{n\omega_n}\int_{\sn}\log \rho_{\rR_0 K}(u)du-1,
        \end{aligned}
    \end{equation*}
    where the last equality is by the definition of $\rR_0 K$.
\end{proof}

The following lemma extends Lemma \ref{Xu-3.2} from convex bodies to log-concave functions. Combined with Lemma \ref{finiteness} and Lemma \ref{ex-R0f}, the following result shows that $\Ent_1(f)$ is finite for non-zero, log-concave $f\in L^1(\rn)$.

\begin{lem}\label{ex-Xu3.2}
    For non-zero, log-concave $f\in L^1(\rn)$,
    \[\int_0^{\infty}\Ent_1(\{f\ge t\})d\mu_f(t)=-1-\frac{1}{n\omega_n}\int_{\sn}\log\rho_{\rR_0f}(u)du.\]
\end{lem}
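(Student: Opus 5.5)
The plan is to apply Lemma \ref{Xu-3.2} to each superlevel set and then integrate against the probability measure $\mu_f$, using Definition \ref{rdm-f} for $\rR_0 f$ together with Fubini's theorem.

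First, since $f$ is log-concave and integrable, each superlevel set $\{f\ge t\}$ with $0<t<\|f\|_\infty$ is convex and bounded, by the inclusion \eqref{inclusion}. For almost every such $t$ it has nonempty interior, because $|\{f\ge t\}|>0$ $\mu_f$-almost everywhere. Sets of measure zero carry zero $\mu_f$-mass, so they can be ignored. Lemma \ref{Xu-3.2} then gives, for $\mu_f$-almost every $t$,
\[
\Ent_1(\{f\ge t\})=-1-\frac{1}{n\omega_n}\int_{\sn}\log\rho_{\rR_0\{f\ge t\}}(u)\,du .
\]
Integrating in $d\mu_f(t)$ and using $\mu_f((0,\|f\|_\infty))=1$ produces the constant $-1$.

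For the remaining term, I would exchange the order of integration:
\[
\int_0^{\|f\|_\infty}\int_{\sn}\log\rho_{\rR_0\{f\ge t\}}(u)\,du\,d\mu_f(t)=\int_{\sn}\int_0^{\|f\|_\infty}\log\rho_{\rR_0\{f\ge t\}}(u)\,d\mu_f(t)\,du .
\]
By Definition \ref{rdm-f}, the inner integral on the right is exactly $\log\rho_{\rR_0 f}(u)$, which gives the claimed identity.

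The main obstacle is justifying Fubini's theorem, because $\log\rho$ changes sign. To handle this I would bound the absolute value of the integrand.

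\emph{Upper bound.} For $x\in K$ we have $\rho_{K-x}(u)\le \operatorname{diam}K$, so $\log\rho_{\rR_0 K}(u)\le\log\operatorname{diam}K$. With $K=\{f\ge t\}\subset B_t$, this gives
\[
\log\rho_{\rR_0\{f\ge t\}}(u)\le \log\bigl(2(-\log t-b)/a\bigr).
\]
Since $|\{f\ge t\}|\le C(-\log t-b)^n$, the positive part is integrable against $\mu_f$ near $t=0$; this is the same kind of estimate as in Lemma \ref{finiteness}.

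\emph{Lower bound.} Jensen's inequality gives $\log\rho_{\rR_0 K}\ge \frac1\alpha\log\rho_{\rR_{\alpha}K}^{\alpha}$ for a fixed $\alpha\in(-1,0)$, for instance $\alpha=-1/2$. By \eqref{rdm-Xray}, $\rho_{\rR_\alpha K}(u)^\alpha$ is an X-ray integral of order $\alpha+1$, divided by $(\alpha+1)|K|$. This yields
\[
\log\rho_{\rR_0 K}(u)\ge -2\log\Bigl(\frac{\int X_K^{1/2}}{(1/2)|K|}\Bigr).
\]
By Cauchy–Schwarz, $\int X_K^{1/2}\le |K|^{1/2}\,\tH^{n-1}(K|u^\perp)^{1/2}$. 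Hence the negative part is at most $\log\bigl(\tH^{n-1}(K|u^\perp)/|K|\bigr)+C$.

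Integrated against $|K|\,dt$, the only problematic region is where $|K|$ is small, namely near $t=\|f\|_\infty$. There $|K|\log(1/|K|)$ is bounded, and $|K|\log\tH^{n-1}(K|u^\perp)\le |K|\log\bigl(C(-\log t-b)^{n-1}\bigr)$, which is integrable.

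Together these bounds give absolute integrability on $(0,\|f\|_\infty)\times\sn$, so Fubini's theorem applies. As a consistency check, Lemma \ref{finiteness} and Lemma \ref{ex-R0f} already ensure that both sides are finite.
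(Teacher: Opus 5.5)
Your proposal is correct and follows the paper's route: apply Lemma~\ref{Xu-3.2} to each superlevel set, integrate against $\mu_f$, interchange the $t$- and $u$-integrals, and invoke Definition~\ref{rdm-f}. The paper performs the interchange without comment, so your absolute-integrability estimates supply the justification it leaves out: the diameter bound controls the positive part, and Jensen with $\alpha=-1/2$ plus Cauchy--Schwarz controls the negative part.
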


\begin{proof}
    For log-concave $f\in L^1(\rn)$, Lemma \ref{Xu-3.2} implies that
    \begin{equation*}
        \begin{aligned}
            \int_0^{\infty}\Ent_1(\{f\ge t\})d\mu_f(t)&=\int_0^{\infty}\Big(-1-\frac{1}{n\omega_n}\int_{\sn}\log\rho_{\rR_0 \{f\ge t\} }(u)du\Big)d\mu_f(t)\\
            &=-1-\frac{1}{n\omega_n}\int_{\sn}\int_0^{\infty}\log\rho_{\rR_0 \{f\ge t\} }(u)d\mu_f(t)du.
        \end{aligned}
    \end{equation*}
    By Definition \ref{rdm-f}, we then have
    \[\int_{0}^{\infty}\Ent_1(\{f\ge t\})d\mu_f(t)=-1-\frac{1}{n\omega_n}\int_{\sn}\log\rho_{\rR_0f}(u)du,\]
    which concludes the proof.
\end{proof}

Before giving the chord entropy inequality for $\Ent_1(f)$, we first recall the following basic result, which is also known as the non-negativity of the relative entropy in information theory. For completeness, we provide the proof here.

\begin{lem}\label{rel-ent}
    For two probability density functions $h,l:\rn\rightarrow (0,\infty)$,
    \[\int_{\rn}h(x)\log\Big(\frac{h(x)}{l(x)}\Big)dx\ge 0.\]
    Equality holds if and only if $h=l$.
\end{lem}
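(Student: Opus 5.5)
The plan is to use Jensen's inequality for the strictly concave function $\log$ with respect to the probability measure $h(x)\,dx$. Since $h,l>0$ everywhere, the ratio $l/h$ is a positive measurable function. Jensen gives
\[
-\int_{\rn}h(x)\log\Big(\frac{h(x)}{l(x)}\Big)dx=\int_{\rn}h(x)\log\Big(\frac{l(x)}{h(x)}\Big)dx\le \log\int_{\rn}h(x)\frac{l(x)}{h(x)}dx=\log\int_{\rn}l(x)dx=0,
\]
which is the claimed inequality.

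To keep the argument free of integrability issues, I would avoid Jensen in the abstract form and use the pointwise bound $\log s\le s-1$ for $s>0$, with equality only at $s=1$. Applying it with $s=l(x)/h(x)$ and multiplying by $h(x)$ gives
\[
h(x)\log\frac{l(x)}{h(x)}\le l(x)-h(x).
\]
The right-hand side is integrable and integrates to $1-1=0$. Hence the positive part of $h\log(l/h)$ is integrable, so $\int h\log(h/l)$ is well defined in $[0,\infty]$ and is nonnegative.

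For the equality case, suppose $\int h\log(h/l)=0$. Then the nonnegative function $l-h-h\log(l/h)$ has integral $0$, so it vanishes almost everywhere. By strictness of $\log s\le s-1$ away from $s=1$, this forces $l/h=1$ almost everywhere, i.e.\ $h=l$ as elements of $L^1$. The converse is immediate.

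The only point needing care is this measure-theoretic one: the integral might be $+\infty$ or not absolutely convergent. The pointwise bound settles it, since it controls the positive part of $h\log(l/h)$. Equality is understood up to null sets, consistent with the paper's conventions.
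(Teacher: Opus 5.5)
Your proof is correct and is essentially the paper's argument: the paper applies Jensen's inequality to the concave function $\log$ with respect to the probability measure $h(x)\,dx$. From the equality case of Jensen it gets $l=ch$ a.e., and then $c=1$ from $\|h\|_1=\|l\|_1=1$. Your pointwise bound $\log s\le s-1$ is the supporting-line form of that same Jensen step at the mean value $\int_{\rn}h\cdot(l/h)\,dx=1$. It has the small advantage of showing explicitly that the integral is well defined in $[0,\infty]$, and of giving the equality case directly without the normalization step.
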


\begin{proof}
    Since $h(x)dx$ is a probability measure on $\rn$, Jensen's inequality implies that
    \begin{equation*}
        \begin{aligned}
            \int_{\rn}h(x)\log\Big(\frac{h(x)}{l(x)}\Big)dx&=-\int_{\rn}h(x)\log\Big(\frac{l(x)}{h(x)}\Big)dx\\
            &\ge-\log\Big(\int_{\rn}\frac{l(x)}{h(x)} h(x)dx\Big)=-\log\|l\|_1=0.
        \end{aligned}
    \end{equation*}
    Equality holds if and only if $l(x)=ch(x)$ for almost all $x\in\rn$, where $c$ is a positive constant. Since $\|h\|_1=\|l\|_1=1$, we have $c=1$. Hence equality holds if and only if $h=l$.
\end{proof}

We also need the following result, which will be used to prove the chord entropy inequality.

\begin{prop}\label{E1cf}
   Let $f \in L^{1}(\rn)$ be non-zero, log-concave, and $f_{\lambda}(x)=f(\lambda x)$ for $\lambda>0$. Then
    \[\Ent_1(f_{\lambda})=\Ent_1(f)+\log\lambda.\]
\end{prop}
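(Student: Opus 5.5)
Since $f_\lambda(x)=f(\lambda x)$, the superlevel sets scale as $\{f_\lambda\ge t\}=\lambda^{-1}\{f\ge t\}$. We therefore plan to treat the two terms in the definition \eqref{E1-def} separately and track how each transforms under dilation. Throughout we use $|\{f_\lambda\ge t\}|=\lambda^{-n}|\{f\ge t\}|$ and $\|f_\lambda\|_1=\lambda^{-n}\|f\|_1$. Together these give $\mu_{f_\lambda}=\mu_f$ as measures on $(0,\|f\|_\infty)$; note also $\|f_\lambda\|_\infty=\|f\|_\infty$.

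\textbf{Step 1: scaling of the geometric chord entropy.} For a convex body $K$ and $s>0$ we claim
\[\Ent_1(sK)=\Ent_1(K)-\log s.\]
This follows directly from the definition. A line $l$ meets $sK$ in a chord of length $s|K\cap l'|$, where $l'=s^{-1}l$, and the Haar measure scales as $dl=s^{n-1}dl'$. Hence
\[\int |sK\cap l|\log|sK\cap l|\,dl=s^n\int|K\cap l'|\big(\log|K\cap l'|+\log s\big)\,dl'.\]
We divide by $|sK|=s^n|K|$ and use Crofton's formula $\int|K\cap l'|\,dl'=|K|$ from \eqref{CCPH}. The formula in Lemma \ref{Xu-3.2} gives the same result, since $\rR_0(sK)=s\rR_0K$ and $\int_{\sn}du=n\omega_n$.

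Apply this with $s=\lambda^{-1}$ and use $\mu_{f_\lambda}=\mu_f$ (a probability measure). The first term of \eqref{E1-def} becomes
\[\int_0^\infty\Ent_1(\{f_\lambda\ge t\})\,d\mu_{f_\lambda}(t)=\int_0^\infty\Ent_1(\{f\ge t\})\,d\mu_f(t)+\log\lambda.\]
Only level sets with $t<\|f\|_\infty$ matter, and these are convex bodies, possibly up to boundary issues where the set is not compact or has empty interior. Such sets form a $\mu_f$-null set of levels, or contribute zero because their measure weight vanishes.

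\textbf{Step 2: the Boltzmann entropy term.} The change of variables $y=\lambda x$ gives
\[\int_{\rn} f_\lambda\log f_\lambda\,dx=\lambda^{-n}\int_{\rn} f\log f\,dy,\]
so the ratio $\frac{1}{n\|f_\lambda\|_1}\int f_\lambda\log f_\lambda$ is unchanged. Adding Steps 1 and 2 yields $\Ent_1(f_\lambda)=\Ent_1(f)+\log\lambda$.

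The main point needing care is Step 1: justifying that the integrals are finite, so that the constant $\log\lambda$ can be split off. Finiteness follows from Lemma \ref{ex-Xu3.2} together with Lemma \ref{ex-R0f} and Lemma \ref{finiteness}, which show that $\int\Ent_1(\{f\ge t\})\,d\mu_f$ is finite. The remaining work is routine bookkeeping of the normalisation $\mu_{f_\lambda}=\mu_f$.
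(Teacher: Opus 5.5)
Your proposal is correct and follows the paper's proof essentially step by step. The paper also uses $\mu_{f_\lambda}=\mu_f$, the invariance of $\frac{1}{n\|f\|_1}\int_{\rn} f\log f\,dx$ under $f\mapsto f_\lambda$, and the scaling $\Ent_1(\lambda^{-1}K)=\Ent_1(K)+\log\lambda$; the only cosmetic difference is that it derives the last identity in X-ray coordinates, via $X_{\lambda^{-1}K}(\lambda^{-1}y,u)=\lambda^{-1}X_K(y,u)$ and $|K|=\int_{K|u^{\perp}}X_K(y,u)\,d\tH^{n-1}(y)$, which is your Haar-measure scaling $dl=s^{n-1}dl'$ together with Crofton's formula written out in coordinates.
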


\begin{proof}
    We first have $\|f_{\lambda}\|_1=\lambda^{-n}\|f\|_1$ and
    \begin{equation}\label{fclogfc}
        \int_{\rn}f_{\lambda}(x)\log f_{\lambda}(x)dx=\lambda^{-n}\int_{\rn}f(x)\log f(x)dx.
    \end{equation}

    %\begin{equation}\label{fc=f}
    %    \|f_{\lambda}\|_1=\int_{\rn}f(\lambda x)dx=\lambda^{-n}\|f\|_1
    %\end{equation}
    %and

    Note that $\{f_\lambda\ge t\}=\lambda^{-1}\{f\ge t\}$. Thus
    \[\frac{d\mu_{f_{\lambda}}(t)}{dt}=\frac{|\{f\ge t\}|}{\|f\|_1}=\frac{d\mu_f(t)}{dt}.\]
    Therefore
    \begin{equation}\label{E1step}
        \int_0^{\|f_{\lambda}\|_\infty}\Ent_1(\{f_\lambda\ge t\})d\mu_{f_{\lambda}}(t)=\int_0^{\|f\|_\infty}\Ent_1(\lambda^{-1}\{f\ge t\})d\mu_f(t).
    \end{equation}

    By the definition of $E_1(K)$, we have
    \begin{equation*}
        \begin{aligned}
            \Ent_1(\lambda^{-1}K)&=-\frac{1}{|\lambda^{-1}K|}\int_{\Aff}|\lambda^{-1}K\cap l|\log |\lambda^{-1}K\cap l|dl\\
            &=-\frac{\lambda^n}{n\omega_n|K|}\int_{\sn}\int_{\lambda^{-1}K|u^{\perp}}X_{\lambda^{-1}K}(z,u)\log X_{\lambda^{-1}K}(z,u)d\tH^{n-1}(z)du.
        \end{aligned}
    \end{equation*}
    Since $X_{\lambda^{-1}K}(\lambda^{-1}y,u)=\lambda^{-1}X_K(y,u)$ for $y\in K|u^{\perp}$, we obtain
    \begin{equation*}
        \begin{aligned}
            \Ent_1(\lambda^{-1}K)=&-\frac{\lambda^n}{n\omega_n|K|}\int_{\sn}\int_{K|u^{\perp}}\lambda^{-n}X_K(y,u)\log X_K(y,u)d\tH^{n-1}(y)du\\
            &\qquad\qquad\qquad\qquad\qquad-\frac{\lambda^n\log\lambda^{-1}}{n\omega_n|K|}\int_{\sn}\int_{K|u^{\perp}}\lambda^{-n}X_K(y,u)d\tH^{n-1}(y)du.
           % &=-\frac{1}{n\omega_n|K|}\int_{\sn}\int_{K|u^{\perp}}X_K(y,u)\log X_K(y,u)d\tH^{n-1}(y)du+\log\lambda\\
           %&=\Ent_1(K)+\log\lambda,
        \end{aligned}
    \end{equation*}
    Note that  
    $|K|=\int_{K|u^{\perp}}X_K(y,u)d\tH^{n-1}(y)$, which yields 
    \begin{equation}\label{E1cK}
        \Ent_1(\lambda^{-1}K)=\Ent_1(K)+\log\lambda.
    \end{equation}

    Combining $\|f_{\lambda}\|_1=\lambda^{-n}\|f\|_1$, \eqref{fclogfc}, \eqref{E1step} and \eqref{E1cK}, we have
    \begin{equation*}
        \begin{aligned}
            \Ent_1(f_{\lambda})&=\int_0^{\|f_{\lambda}\|_{\infty}}\Ent_1(\lambda^{-1}\{f\ge t\})d\mu_f(t)-\frac{1}{n\|f_{\lambda}\|_1}\int_{\rn}f_{\lambda}(x)\log f_{\lambda}(x)dx
            %\\&=\int_0^{\|f\|_{\infty}}(\Ent_1(\{f\ge t\})+\log\lambda)d\mu_f(t)-\frac{1}{n\|f\|_1}\int_{\rn}f(x)\log f(x)dx
            =\Ent_1(f)+\log\lambda,
        \end{aligned}
    \end{equation*}
    which concludes the proof.
\end{proof}

We are now in a position to prove the following chord entropy inequality for $\Ent_1(f)$, where $B_f$ denotes the ball with $|B_f|=\|f\|_1$.

\begin{thm}\label{chord-entropy 1}
    Let $f\in L^1(\rn)$ be a non-zero, log-concave function. Then
    \begin{equation}\label{Ent1-proof}
        \Ent_1(f)\ge \Ent_1(B_f)=\Ent_1(B^n)+\frac{1}{n}\log\Big(\frac{\omega_n}{\|f\|_1}\Big),
    \end{equation}
    with equality if and only if $f$ is a constant multiple of the characteristic function of a ball.
\end{thm}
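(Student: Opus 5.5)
Since every ingredient in \eqref{E1-def} is invariant under translation, we first use Proposition \ref{E1cf} to reduce the general case to a normalized one. Choose $\lambda$ with $\lambda^{-n}\|f\|_1=\omega_n$, so $f_\lambda$ has $\|f_\lambda\|_1=|B^n|$. We have $\Ent_1(f_\lambda)=\Ent_1(f)+\log\lambda$ and $\log\lambda=\frac1n\log(\|f\|_1/\omega_n)$. Hence it suffices to prove $\Ent_1(f)\ge\Ent_1(B^n)$ when $\|f\|_1=\omega_n$. The identity $\Ent_1(B_f)=\Ent_1(B^n)+\frac1n\log(\omega_n/\|f\|_1)$ is the same scaling applied to $\chi_{B^n}$, which is \eqref{E1cK}.

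Next we apply the geometric inequality \eqref{1} level by level. Each superlevel set $\{f\ge t\}$ with $0<t<\|f\|_\infty$ is a convex body, up to null sets. Therefore
\[
\Ent_1(\{f\ge t\})\ge \Ent_1(B^n)+\frac1n\log\omega_n-\frac1n\log|\{f\ge t\}|.
\]
We integrate this against the probability measure $\mu_f$; the integrals are finite by Lemma \ref{ex-Xu3.2}, Lemma \ref{finiteness} and Lemma \ref{ex-R0f}. The result is
\[
\Ent_1(f)\ge \Ent_1(B^n)+\frac1n\log\omega_n-\frac{1}{n\|f\|_1}\Big(\int_0^{\infty}|\{f\ge t\}|\log|\{f\ge t\}|\,dt+\int_{\rn}f\log f\,dx\Big).
\]
It therefore remains to prove the purely measure-theoretic inequality
\[
\int_0^\infty |\{f\ge t\}|\log|\{f\ge t\}|\,dt+\int_{\rn} f\log f\,dx\le \|f\|_1\log\|f\|_1,
\]
and to characterize its equality cases. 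This step is the main obstacle.

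For this inequality I would use Lemma \ref{rel-ent}, or equivalently Jensen's inequality, on the product space. Consider the probability measure $d\nu=\|f\|_1^{-1}\chi_{\{f(x)\ge t\}}\,dx\,dt$ on $\rn\times(0,\infty)$. The left-hand side equals
\[
\|f\|_1\int \log\big(|\{f\ge t\}|\,f(x)\big)\,d\nu.
\]
This holds because $\int_{\rn}f\log f\,dx=\int\!\!\int \chi_{\{f(x)\ge t\}}\log f(x)\,dt\,dx$ by the layer cake formula. By Jensen's inequality for the concave function $\log$, it is at most
\[
\|f\|_1\log\int |\{f\ge t\}|\,f(x)\,d\nu=\|f\|_1\log\Big(\|f\|_1^{-1}\int\!\!\int \chi_{\{f\ge t\}}(x)\,|\{f\ge t\}|\,f(x)\,dx\,dt\Big).
\]
This bound is not obviously $\|f\|_1\log\|f\|_1$, so instead I would apply Jensen to the reciprocal form:
\[
\int\log\frac{1}{|\{f\ge t\}|\,f(x)}\,d\nu\ge -\log\int\frac{d\nu}{|\{f\ge t\}|\,f(x)}.
\]
That gives a lower bound, which points the wrong way. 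The right choice is Lemma \ref{rel-ent} with $h=\|f\|_1^{-1}\chi_{\{f\ge t\}}(x)$ as a density on the product space. For the comparison density, take
\[
l(x,t)=\frac{\chi_{\{f\ge t\}}(x)}{f(x)\,|\{f\ge t\}|}\cdot\frac{1}{Z}.
\]
Fubini shows $Z=\int\frac{1}{|\{f\ge t\}|}\int_{\{f\ge t\}}\frac{dx}{f(x)}\,dt$, which is $\le$ or $\ge$ something and is not clearly $1$. I would therefore first test whether $Z\le 1$ can be established. If that fails, I would fall back on the direct one-variable inequality obtained by writing $\int f\log f=\int_0^\infty(\log t+1)|\{f\ge t\}|\,dt$ and $m(t)=|\{f\ge t\}|$, which reduces the claim to
\[
\int_0^\infty m\log(tm)\,dt+\int m\,dt\le \|f\|_1\log\|f\|_1
\]
for non-increasing $m$. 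I would attack this via Jensen on the probability measure $m(t)\,dt/\|f\|_1$ applied to $\log(tm(t))$, using the bound
\[
\int_0^\infty t\,m(t)^2\,dt\le \|f\|_1^2/2\cdot\ldots,
\]
which is where monotonicity of $m$ enters.

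Finally, for the equality case: equality forces equality in \eqref{1} for $\mu_f$-almost every $t$, so almost every level set is a ball. It also forces equality in the level-set Jensen step, which should force $tm(t)$ to be constant on the support. In that step, monotonicity should force $m$ to be constant, so $f=c\chi_E$, and combining the two conclusions, $E$ is a ball.
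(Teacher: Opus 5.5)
Your reduction is correct and coincides with the paper's. Applying \eqref{1} on each superlevel set (the paper phrases this as $\Ent_1(f)\ge\Ent_1(f^\star)$) leaves exactly
\[
\int_0^\infty m(t)\log m(t)\,dt+\int_{\rn}f\log f\,dx\le\|f\|_1\log\|f\|_1,\qquad m(t)=|\{f\ge t\}|,
\]
which is the statement $q(f)\ge 0$ in the paper. The gap is that you never prove this inequality, even though your first Jensen computation already does. Since $\chi_{\{f\ge t\}}(x)\le 1$,
\[
\int_0^\infty\!\!\int_{\rn}\chi_{\{f\ge t\}}(x)\,m(t)f(x)\,dx\,dt\le\int_0^\infty m(t)\,dt\int_{\rn}f(x)\,dx=\|f\|_1^2,
\]
so your upper bound is at most $\|f\|_1\log\|f\|_1$. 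This is the paper's argument in another form: Lemma \ref{rel-ent} with $h=\|f\|_1^{-1}\chi_{\{f\ge t\}}(x)$ and $l(x,t)=f(x)m(t)/\|f\|_1^2$, the product of the two marginals of $h$.

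The routes you switch to instead do not work. With $l\propto\chi_{\{f\ge t\}}(x)/(f(x)m(t))$, the inequality $\int h\log(h/l)\ge 0$ gives $\int m\log m+\int f\log f\ge\|f\|_1\log(\|f\|_1/Z)$. That is a lower bound, the wrong direction whatever $Z$ is. The one-variable fallback cannot be sharp. Jensen for $\log(t\,m(t))$ against $m\,dt/\|f\|_1$ would need $\int_0^\infty t\,m(t)^2\,dt\le\|f\|_1^2/e$. This is false already for the extremals $f=c\chi_E$, where the left side equals $\|f\|_1^2/2$; monotonicity of $m$ gives only the bound $\|f\|_1^2/2$. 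Moreover $t\,m(t)=t|E|$ is not constant there, so Jensen is strict exactly where you need equality.

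For the same reason, your equality analysis, which relies on $t\,m(t)$ being constant, is inconsistent with the actual equality cases. With the correct step, equality forces
\[
\int_0^\infty\!\int_{\{f<t\}}m(t)f(x)\,dx\,dt=0 .
\]
Since $m(t)>0$ for $t<\|f\|_\infty$, this gives $f=\|f\|_\infty$ a.e.\ on $\{f>0\}$, i.e.\ $f=c\chi_E$. Equality in \eqref{1} for almost every $t$ then makes $E$ a ball. The paper reads off the same conclusion from the equality case $p=\bar f s$ of Lemma \ref{rel-ent}.
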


\begin{proof}
We first prove the second equality. For a log-concave $f\in L^1(\rn)$, let $B_f=rB^n$ with $r=(\|f\|_1/\omega_n)^{1/n}$. By Proposition \ref{E1cf}, we have
\[\Ent_1(B_f)=\Ent_1(B^n)+\frac{1}{n}\log\Big(\frac{\omega_n}{\|f\|_1}\Big).\]

For the first inequality, recall that $|\{f^{\star}\ge t\}|=|\{f\ge t\}|$ and $\|f^{\star}\|_p=\|f\|_p$ for all $p>0$. Hence $\mu_f=\mu_{f^{\star}}$, and 
   \[\int_{\rn}f^{\star}(x)\log f^{\star}(x)dx=\int_{\rn}f(x)\log f(x)dx.\]
   By the geometric chord entropy inequality \eqref{1}, we have
    \begin{equation*}
        \begin{aligned}
            \Ent_1(f)&=\int_{0}^{\infty}\Ent_1(\{f\ge t\})d\mu_f(t)-\frac{1}{n\|f\|_1}\int_{\rn}f(x)\log f(x)dx\\
            &\ge \int_0^{\infty}\Ent_1(\{f\ge t\}^{\star})d\mu_{f^{\star}}(t)-\frac{1}{n\|f^{\star}\|_1}\int_{\rn}f^{\star}(x)\log f^{\star}(x)dx=\Ent_1(f^{\star}),
        \end{aligned}
    \end{equation*}
    where equality holds if and only if superlevel sets $\{f\ge t\}$ are balls for almost all $t>0$.

    Proposition \ref{E1cf} implies that
    \begin{equation*}
        \begin{aligned}
            \Ent_1(f^{\star})&=\int_0^{\infty}\Big(\Ent_1(B^n)+\frac{1}{n}\log\Big(\frac{\omega_n}{|\{f\ge t\}|}\Big)\Big)d\mu_{f^{\star}}(t)-\frac{1}{n\|f^{\star}\|_1}\int_{\rn}f^{\star}(x)\log f^{\star}(x)dx\\
            &=\Ent_1(B^n)+\frac{1}{n}\log\Big(\frac{\omega_n}{\|f\|_1}\Big)+q(f^{\star}),
        \end{aligned}
    \end{equation*}
    where the functional $q(f)$ is given by
    \[q(f)=\frac{1}{n}\log\|f\|_1-\frac{1}{n\|f\|_1}\int_0^{\infty}|\{f\ge t \}|\log |\{f\ge t\}|dt-\frac{1}{n\|f\|_1}\int_{\rn}f(x)\log f(x)dx.\]
    Note that $q(f)=q(f^{\star})$. It remains to prove $q(f)\ge 0$ and the equality characterization.

    Let $p(x,t):\rn\times \tr^+\rightarrow (0,\infty)$ be defined by
    \[p(x,t)=\frac{\chi_{\{f\ge t\}}(x)}{\|f\|_1},\]
    which is a probability density function on $\rn\times\tr^{+}$. Its marginals are the following two probability density functions:
    \[\bar{f}(x)=\frac{f(x)}{\|f\|_1}~~\text{and}~~s(t)=\frac{|\{f\ge t\}|}{\|f\|_1}.\]
    We now apply Lemma \ref{rel-ent} with $h=p$ and $l=\bar{f}s$. Then we obtain
    \begin{equation}\label{app-rel ent}
        \begin{aligned}
            \int_{\rn}\int_0^{\infty}p(x,t)\log\Big(\frac{p(x,t)}{\bar{f}(x) s(t)}\Big)dtdx\ge 0,
        \end{aligned}
    \end{equation}
    where 
    \begin{equation*}
        \begin{aligned}
            \int_{\rn}&\int_0^{\infty}p(x,t)\log\Big(\frac{p(x,t)}{l(x, t)}\Big)dtdx\\
            &=-\log\|f\|_1-\frac{1}{\|f\|_1}\int_{\rn}f(x)\log \Big(\frac{f(x)}{\|f\|_1}\Big)dx-\frac{1}{\|f\|_1}\int_0^{\infty}|\{f\ge t\}|\log\Big(\frac{|\{f\ge t\}|}{\|f\|_1}\Big)dt\\
            &=nq(f).
        \end{aligned}
    \end{equation*}

    Therefore \eqref{app-rel ent} implies that $q(f)\ge 0$ and thus
    \begin{equation*}
        \Ent_1(f)\ge \Ent_1({B^n})+\frac{1}{n}\log\Big(\frac{\omega_n}{\|f\|_1}\Big).
    \end{equation*}
    As discussed above, equality holds if and only if superlevel sets $\{f\ge t\}$ are balls for almost all $t>0$ and $q(f)=0$. By Lemma \ref{rel-ent}, equality is attained in \eqref{app-rel ent} if and only if $p(x,t)=\bar{f}(x)s(t)$ for almost all $x\in\rn$ and $t>0$. Hence $f=c\chi_{B}$, where $c>0$ and $B$ is a ball.
\end{proof}

\begin{rem}\label{remE1}
In the proof of Theorem~\ref{chord-entropy 1}, the convexity of the superlevel sets $\{f\ge t\}$ is the main geometric property of log-concave functions that is used. Consequently, the theorem extends to quasi-concave functions, provided the relevant integrability assumptions are satisfied. Here $f$ is quasi-concave if $\{f\ge t\}$ is convex for every $t\in\mathbb{R}$.
\end{rem}

For a convex, bounded set $K\subset\rn$ with non-empty interior, its closure $\bar K$ is a convex body. Hence
\begin{equation}\label{E1-section}
\begin{aligned}
        \Ent_1(\bar K)=-\frac{1}{n\omega_n|K|}\int_{\sn}\int_{K|u^{\perp}}X_{\bar K}(y,u)\log X_{\bar K}(y,u)d\tH^{n-1}(y)du.
\end{aligned}
\end{equation}
Since for almost all $u\in\sn$, the intersection $\partial K\cap(y+\tr u)$ only contains two points, 
\[|K\cap(y+\tr u)|=|\bar K\cap (y+\tr u)|,\]
for $\tH^{n-1}$-almost all $y\in \bar K|u^{\perp}$ and $u\in\sn$. Therefore, if $K\subset \rn$ is a convex, bounded set with non-empty interior, we can extend the definition \eqref{E1-section} by letting $\Ent_1(K):=\Ent_1(\bar K)$ and then get
\begin{equation}\label{ex-EI1}
    \Ent_1(K)\ge \Ent_1(B_K)=\Ent_1(B^n)+\frac{1}{n}\log\Big(\frac{\omega_n}{|K|}\Big).
\end{equation}

By \eqref{ex-EI1}, Lemma \ref{ex-Xu3.2} and Remark \ref{remE1}, we have the following corollary, which extends Theorem \ref{chord-entropy 1} to quasi-concave functions. 

\begin{cor}\label{ex-C1}
    Let $\beta\in(0,1)$ be fixed, and $f\in L^1(\rn)$ be a non-negative, non-zero quasi-concave function such that $\Ent_1(f)$ is finite. Then
    \[\Ent_1(f)\ge \Ent_1(B_f)=\Ent_1(B^n)+\frac{1}{n}\log\Big(\frac{\omega_n}{\|f\|_1}\Big),\]
    with equality if and only if $f$ is a constant multiple of the characteristic function of a ball.
\end{cor}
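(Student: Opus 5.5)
The plan is to repeat the proof of Theorem \ref{chord-entropy 1} word for word, checking at each step that only the convexity of the superlevel sets (together with the finiteness hypotheses) is used. This is the content of Remark \ref{remE1}. Since $f$ is quasi-concave and integrable, each $\{f\ge t\}$ with $t>0$ is convex and has finite measure. Each such set with positive measure also has non-empty interior, and it is bounded. Boundedness holds because a convex set of finite positive measure in $\rn$ is bounded: it contains a small ball, and the convex hull of that ball with a distant point has large volume.

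The first step is to define $\Ent_1(\{f\ge t\})$ through the closure, as in \eqref{ex-EI1}. Note that $\mu_f$ is still a probability measure on $(0,\|f\|_\infty)$. The second equality $\Ent_1(B_f)=\Ent_1(B^n)+\frac1n\log(\omega_n/\|f\|_1)$ is exactly the computation in the proof of Theorem \ref{chord-entropy 1}, via Proposition \ref{E1cf} applied to $\chi_{B^n}$, which is log-concave.

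For the inequality, split $\Ent_1(f)$ as in \eqref{E1-def}. Apply \eqref{ex-EI1} levelwise to get
\[\Ent_1(\{f\ge t\})\ge \Ent_1(B^n)+\tfrac1n\log\big(\omega_n/|\{f\ge t\}|\big).\]
Integrate against $\mu_f$. This yields $\Ent_1(f)\ge \Ent_1(B^n)+\frac1n\log(\omega_n/\|f\|_1)+q(f)$, with $q$ as in the proof of Theorem \ref{chord-entropy 1}.

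Then apply the relative entropy argument of Lemma \ref{rel-ent} to $p(x,t)=\chi_{\{f\ge t\}}(x)/\|f\|_1$ and its marginals $\bar f$, $s$. This gives $q(f)\ge0$, a step that uses no convexity at all.

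The main obstacle is making these manipulations legitimate without log-concavity. The assumption that $\Ent_1(f)$ is finite must be used to ensure the following:
\begin{itemize}
\item the entropy $\int f\log f$ and the integral $\int_0^\infty \Ent_1(\{f\ge t\})\,d\mu_f$ are not of the form $\infty-\infty$;
\item the three terms of $q(f)$ are individually finite, so that the identity $nq(f)=\int\!\!\int p\log\frac{p}{\bar f s}$ holds.
\end{itemize}

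I would handle this as follows. Each levelwise lower bound is an integrable function of $t$ under $\mu_f$, up to the term $-\frac1n\int s\log|\{f\ge t\}|\,dt$. Where that term is infinite, the inequality holds trivially, since the right-hand side is then $-\infty$ or the decomposition degenerates. Otherwise I would argue by truncating $t$ to compact subintervals of $(0,\|f\|_\infty)$ and passing to the limit by monotone convergence. The point needing the most care is the case $\int f\log f=-\infty$; there I would first check that finiteness of $\Ent_1(f)$ rules it out.

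For equality, equality must hold in \eqref{ex-EI1} for $\mu_f$-almost every $t$, and also in Lemma \ref{rel-ent}. The equality case of \eqref{ex-EI1} (from \cite{Xu, Zh3} applied to $\overline{\{f\ge t\}}$) forces $\{f\ge t\}$ to be a ball up to null sets. Equality in Lemma \ref{rel-ent} forces $p=\bar f s$, so $\chi_{\{f\ge t\}}(x)$ factorizes in $(x,t)$. As in Lemma \ref{m1}, this gives $f=c\chi_E$, so $f$ is a multiple of the characteristic function of a ball. The converse, that such functions attain equality, is immediate.
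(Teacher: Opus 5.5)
Your route is the paper's. There the corollary is obtained by rerunning the proof of Theorem~\ref{chord-entropy 1} with the levelwise inequality \eqref{ex-EI1} for the convex bounded sets $\{f\ge t\}$ (Remark~\ref{remE1}). Your direct levelwise bound is the paper's comparison with $f^\star$ combined with Proposition~\ref{E1cf}, and your relative-entropy and equality steps match. The paper is silent on integrability. The one step of your added discussion that is wrong is the claim that the case $\int_0^\infty|\{f\ge t\}|\log|\{f\ge t\}|\,dt=+\infty$ is trivial. In that case your integrated lower bound for $\int_0^\infty\Ent_1(\{f\ge t\})\,d\mu_f(t)$ is $-\infty$, which proves nothing, while $\Ent_1(B_f)$ is finite. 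Moreover $q(f)=-\infty$, so $q(f)\ge 0$ cannot come from splitting $\iint p\log\frac{p}{\bar f s}$ into three finite pieces as in the proof of Theorem~\ref{chord-entropy 1}. (If that integral were $-\infty$ instead, the levelwise bound would give $\int\Ent_1(\{f\ge t\})\,d\mu_f=+\infty$, contradicting the hypothesis.) So this case has to be shown impossible. Ruling out $\int f\log f=-\infty$ does not do that unless you also link the two quantities.

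The case is excluded by $f\log f\in L^1(\rn)$, which is the intended meaning of ``$\Ent_1(f)$ finite'' (both terms in \eqref{E1-def} finite). Put $E_t=\{f\ge t\}$ and $\bar f=f/\|f\|_1$. Jensen's inequality for the uniform probability on $E_t$, where $\log\bar f$ is bounded below, gives
\[
\int_{E_t}\log\bar f\,dx\le|E_t|\log\Big(\frac{1}{|E_t|}\int_{E_t}\bar f\,dx\Big)\le-|E_t|\log|E_t|.
\]
Since $\int_0^\infty\int_{E_t}|\log\bar f|\,dx\,dt=\int_{\rn}f|\log\bar f|\,dx<\infty$, integrating in $t$ yields
\[
\int_0^\infty|E_t|\log|E_t|\,dt\le-\int_{\rn}f\log\bar f\,dx=\|f\|_1\log\|f\|_1-\int_{\rn}f\log f\,dx<\infty .
\]
This is exactly $q(f)\ge0$, with every term of $q(f)$ finite. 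The lower finiteness comes from the parenthetical remark above. So it closes the gap and can even replace the hypograph relative-entropy argument. If equality holds here, then Jensen is an equality for a.e.\ $t$, which forces $f$ to be constant on almost every $E_t$. Hence $f=c\chi_E$, which is your factorization conclusion. With this inserted, the rest of your proposal goes through.
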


%%%%%%%%%%%%%%%%%%%%%%% The following are for n+1%%%%%%%%%%%%%%%%%%%%%%%%%%%%%%%%%%%
\subsection{The chord entropy inequality for \texorpdfstring{$\Ent_{n+1}(f)$}{}}\hfill

With the limiting case $\alpha\rightarrow 0^+$ fully established, we turn to the case $\alpha\rightarrow n$, which exhibits fundamentally different behavior. When $\alpha=n$, Theorem A and Theorem B imply that

\begin{equation}\label{chord Sob-n}
    \frac{\|f\|_1^2}{\|f\|_{\infty}}\leq \di \min\{f(x), f(y)\}dxdy\leq \|f\|_{\frac{1}{2}}
\end{equation}
for log-concave functions $f\in L^{1}(\rn)$. Therefore the chord Sobolev inequality does not reduce to an identity when $\alpha=n$. Even so, inequalities in \eqref{chord Sob-n} are  trivial, which follow from
\[\frac{f(x)f(y)}{\|f\|_{\infty}}\leq  \min\{f(x), f(y)\},\qquad\min\{f(x), f(y)\}\leq f(x)^{\frac{1}{2}}f(y)^{\frac{1}{2}}.\]

In this case, the definition of $\Ent_{n+1}(f)$ is primarily motivated by the geometric quantities  $\Ent_{n+1}(K)$. Recall that for a convex body $K\in\rn$, 
\[\Ent_{n+1}(K)=-\frac{n+1}{I_{n+1}(K)}\int_{\Aff}|K\cap l|^{n+1}\log |K\cap l|dl,\]
where $I_{n+1}(K)=\frac{n+1}{\omega_n}|K|^2$ by the Poincar\'e-Hadwiger's integral formula \eqref{CCPH}.

For a log-concave $f\in L^1(\rn)$, we define
\[\Ent_{n+1}(f)=\int_0^{\infty}\Ent_{n+1}(\{f\ge t\})d\nu_f(t),\]
where
\[d\nu_f(t)=\frac{(n+1)|\{f\ge t\}|^2}{\omega_nI_{n+1}(f)}dt=\frac{I_{n+1}(\{f\ge t\})}{I_{n+1}(f)}dt\]
is a probability measure on $\rn$. Since $s\mapsto -s^{n+1}\log s$ is bounded from above, $\Ent_{n+1}(f)$ admits an upper bound. This ensures that $\Ent_{n+1}(f)$ is well-defined.

\begin{prop}
Let $f \in L^{1}(\rn)$ be non-zero, log-concave, and $f_{\lambda}(x)=f(\lambda x)$ for $\lambda>0$. Then
\[
\Ent_{n+1}(f_{\lambda})
=
\Ent_{n+1}(f)
+
(n+1)\log\lambda.
\]
\end{prop}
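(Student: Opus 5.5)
The plan mirrors the proof of Proposition \ref{E1cf}. There are two steps: show that the measure $\nu_f$ is invariant under the dilation $f\mapsto f_\lambda$, and then compute how $\Ent_{n+1}$ of a convex body changes under dilation.

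\emph{Step 1: $\nu_{f_\lambda}=\nu_f$.} Since $\{f_\lambda\ge t\}=\lambda^{-1}\{f\ge t\}$, we have $|\{f_\lambda\ge t\}|^2=\lambda^{-2n}|\{f\ge t\}|^2$ and $\|f_\lambda\|_\infty=\|f\|_\infty$. By the Poincar\'e--Hadwiger formula for functions,
\[
I_{n+1}(f_\lambda)=\frac{n+1}{\omega_n}\int_0^{\infty}|\{f_\lambda\ge t\}|^2dt=\lambda^{-2n}I_{n+1}(f).
\]
Therefore
\[
\frac{d\nu_{f_\lambda}}{dt}=\frac{(n+1)|\{f_\lambda\ge t\}|^2}{\omega_n I_{n+1}(f_\lambda)}=\frac{(n+1)|\{f\ge t\}|^2}{\omega_n I_{n+1}(f)}=\frac{d\nu_f}{dt},
\]
and so
\[
\Ent_{n+1}(f_\lambda)=\int_0^{\|f\|_\infty}\Ent_{n+1}(\lambda^{-1}\{f\ge t\})\,d\nu_f(t).
\]

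\emph{Step 2: $\Ent_{n+1}(\lambda^{-1}K)=\Ent_{n+1}(K)+(n+1)\log\lambda$ for a convex body $K$.} Write the entropy through X-rays using \eqref{chord-Xray}:
\[
\Ent_{n+1}(K)=-\frac{n+1}{n\omega_n I_{n+1}(K)}\int_{\sn}\int_{K|u^\perp}X_K^{n+1}\log X_K\,d\tH^{n-1}du.
\]
Substitute $z=\lambda^{-1}y$ and use $X_{\lambda^{-1}K}(\lambda^{-1}y,u)=\lambda^{-1}X_K(y,u)$. The Jacobian contributes $\lambda^{-(n-1)}$ and the X-ray power contributes $\lambda^{-(n+1)}$, for a total factor $\lambda^{-2n}$. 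The logarithm becomes $\log X_K-\log\lambda$. The normalization satisfies $I_{n+1}(\lambda^{-1}K)=\lambda^{-2n}I_{n+1}(K)$, so the $\lambda^{-2n}$ factors cancel. The $\log X_K$ part reproduces $\Ent_{n+1}(K)$. The $-\log\lambda$ part gives
\[
+\frac{(n+1)\log\lambda}{I_{n+1}(K)}\cdot\frac{1}{n\omega_n}\int_{\sn}\int_{K|u^\perp}X_K^{n+1}\,d\tH^{n-1}du=(n+1)\log\lambda,
\]
again by \eqref{chord-Xray}.

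\emph{Step 3: conclusion.} Insert the identity from Step 2 into the integral from Step 1 and use that $\nu_f$ is a probability measure. This gives $\Ent_{n+1}(f_\lambda)=\Ent_{n+1}(f)+(n+1)\log\lambda$.

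The only delicate point is bookkeeping the powers of $\lambda$ in Step 2; the scaling $\lambda^{-2n}$ must match exactly between the numerator and $I_{n+1}$. A secondary point is justifying the splitting of the integral when the entropy terms could be infinite. This is harmless: $s^{n+1}\log s$ is bounded near $0$ and the level sets are bounded convex bodies, so every integral involved is finite.
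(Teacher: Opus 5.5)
Your proposal is correct and follows the paper's proof essentially step for step. You obtain $\nu_{f_\lambda}=\nu_f$ from $\{f_\lambda\ge t\}=\lambda^{-1}\{f\ge t\}$ and $I_{n+1}(f_\lambda)=\lambda^{-2n}I_{n+1}(f)$. You then derive $\Ent_{n+1}(\lambda^{-1}K)=\Ent_{n+1}(K)+(n+1)\log\lambda$ from the scaling $X_{\lambda^{-1}K}(\lambda^{-1}y,u)=\lambda^{-1}X_K(y,u)$ and the X-ray formula for $I_{n+1}(K)$, and integrate against the probability measure $\nu_f$.

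Your closing finiteness remark is looser than it should be: bounded level sets make each $\Ent_{n+1}(\{f\ge t\})$ finite, but that alone does not control the $\nu_f$-integral. Nothing more is needed, though, because adding a constant under a probability measure is legitimate whenever $\Ent_{n+1}(f)$ exists as an extended real number, which the paper also takes for granted.
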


\begin{proof}
    Recall that $\{f_\lambda\ge t\}=\lambda^{-1}\{f\ge t\}$ and hence
    \[I_{n+1}(\{f_{\lambda}\ge t\})=\frac{n+1}{\omega_n}|\{f_{\lambda}\ge t\}|^2=\lambda^{-2n}I_{n+1}(\{f\ge t\}).\]
    Moreover, note that
    \[I_{n+1}(f_\lambda)=\frac{n+1}{\omega_n}\di \min\{f_\lambda(x),f_\lambda(y)\}dxdy=\frac{n+1}{\omega_n}\int_0^{\|f\|_{\infty}}|\{f_\lambda\ge t\}|^2dt=\lambda^{-2n}I_{n+1}(f),\]
    which implies that $\nu_{f_\lambda}=\nu_f$.

    By the definition of $\Ent_{n+1}(K)$ and the identity $X_{\lambda^{-1}K}(\lambda^{-1}y,u)=\lambda^{-1}X_K(y,u)$,
    \begin{equation}\label{En+1cK}
        \begin{aligned}
            \Ent_{n+1}(\lambda^{-1}K)
            &=-\frac{n+1}{\lambda^{-2n}n\omega_nI_{n+1}(K)}\int_{\sn}\int_{\lambda^{-1}K|u^{\perp}}X_{\lambda^{-1}K}(z,u)^{n+1}\log X_{\lambda^{-1}K}(z,u)d\tH^{n-1}(z)du\\
            &=-\frac{n+1}{n\omega_nI_{n+1}(K)}\int_{\sn}\int_{K|u^{\perp}}X_K(y,u)^{n+1}\log X_K(y,u)d\tH^{n-1}(y)du\\
            &\qquad+\frac{n+1}{n\omega_nI_{n+1}(K)}\log\lambda\int_{\sn}\int_{K|u^{\perp}}X_K(y,u)^{n+1}d\tH^{n-1}(y)du\\
            &=\Ent_{n+1}(K)+(n+1)\log\lambda.
        \end{aligned}
    \end{equation}
    Therefore
    \[\Ent_{n+1}(f_{\lambda})=\int_0^{\|f\|_{\infty}}\Ent_{n+1}(\lambda^{-1}\{f\ge t\})d\nu_f(t)=\Ent_{n+1}(f)+(n+1)\log\lambda,\]
    which completes the proof.
\end{proof}

\begin{thm}
    Let $f\in L^1(\rn)$ be a non-zero, log-concave function. Then
    \[\Ent_{n+1}(f)\leq \Ent_{n+1}(\tilde{B}_f)=\Ent_{n+1}(B^n)+\frac{n+1}{n}\log\Big(\frac{(n+1)\|f\|_1}{I_{n+1}(f)}\Big),\]
    where $\tilde{B_f}$ is a ball with $|\tilde{B}_f|=\frac{\omega_n I_{n+1}(f)}{(n+1)\|f\|_1}$. Equality holds if and only if $f$ is a constant multiple of the characteristic function of a ball in $\rn$.
\end{thm}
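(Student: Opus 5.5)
The plan mirrors the proof of Theorem \ref{chord-entropy 1}: apply the geometric inequality \eqref{n+1} on each superlevel set, then control the resulting average of volumes by Jensen's inequality. The key observation is that, unlike the case of $\Ent_1$, no relative-entropy argument seems to be needed. The weight $d\nu_f$ is proportional to $|\{f\ge t\}|^2dt$, so a single application of Jensen to $\log$ produces exactly the ratio $\|f\|_1/I_{n+1}(f)$.

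\emph{Step 1: the identity $\Ent_{n+1}(\tilde B_f)=\Ent_{n+1}(B^n)+\frac{n+1}{n}\log\big(\frac{(n+1)\|f\|_1}{I_{n+1}(f)}\big)$.} Write $\tilde B_f=\lambda^{-1}B^n$ with $\lambda^{-n}\omega_n=|\tilde B_f|$. By \eqref{En+1cK}, $\Ent_{n+1}(\tilde B_f)=\Ent_{n+1}(B^n)+\frac{n+1}{n}\log\frac{\omega_n}{|\tilde B_f|}$. Inserting $|\tilde B_f|=\frac{\omega_nI_{n+1}(f)}{(n+1)\|f\|_1}$ gives the claimed right-hand side. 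The same computation shows that \eqref{n+1} reads $\Ent_{n+1}(K)\le \Ent_{n+1}(B^n)+\frac{n+1}{n}\log\frac{\omega_n}{|K|}$.

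\emph{Step 2: levelwise comparison.} For almost every $0<t<\|f\|_\infty$ the set $\{f\ge t\}$ is a bounded convex set with nonempty interior, and we pass to its closure as in \eqref{ex-EI1}. Applying \eqref{n+1} and integrating against the probability measure $\nu_f$ gives
\[\Ent_{n+1}(f)\le \Ent_{n+1}(B^n)+\frac{n+1}{n}\int_0^{\|f\|_\infty}\log\frac{\omega_n}{|\{f\ge t\}|}\,d\nu_f(t).\]
This is legitimate because $\Ent_{n+1}(\{f\ge t\})$ is bounded above uniformly, since $-s^{n+1}\log s$ is bounded above. The right-hand integral is finite because $\log|\{f\ge t\}|$ is controlled by \eqref{inclusion}. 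Hence the integral of $\Ent_{n+1}$ makes sense in $[-\infty,\infty)$. If $\Ent_{n+1}(f)=-\infty$, there is nothing to prove.

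\emph{Step 3: Jensen.} By concavity of $\log$,
\[\int\log\frac{1}{|\{f\ge t\}|}\,d\nu_f\le \log\int\frac{d\nu_f}{|\{f\ge t\}|}=\log\frac{\int_0^\infty|\{f\ge t\}|dt}{\int_0^\infty|\{f\ge t\}|^2dt}=\log\frac{(n+1)\|f\|_1}{\omega_nI_{n+1}(f)}.\]
Here we use the Poincar\'e--Hadwiger formula $I_{n+1}(f)=\frac{n+1}{\omega_n}\int|\{f\ge t\}|^2dt$. Substituting into Step 2 and using Step 1 yields the inequality.

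\emph{Equality.} If equality holds, it holds in Jensen, so $|\{f\ge t\}|$ is constant for $\nu_f$-a.e.\ $t$, hence for a.e.\ $t\in(0,\|f\|_\infty)$. Since the superlevel sets are nested, this forces $f=c\chi_E$. Equality in \eqref{n+1} for $E$ then forces $E$ to be a ball. Conversely, for $f=c\chi_B$ every step is an identity.

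\emph{Main obstacle.} I expect the main obstacle to be measure-theoretic rather than conceptual. One must check measurability of $t\mapsto\Ent_{n+1}(\{f\ge t\})$ and justify the interchange with the possibly infinite negative part of the integral. I would handle this by monotone convergence applied to the bounded-above integrand.
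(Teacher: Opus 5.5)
Your argument is correct and is essentially the paper's proof: apply \eqref{n+1} levelwise, then use a single Jensen step. Your Jensen for the concave $\log$ against $\nu_f$ is the same inequality as the paper's Jensen for the convex $s\log s$ against $\mu_f$, since $d\nu_f$ is proportional to $|\{f\ge t\}|\,d\mu_f$; the paper's detour through $f^\star$ is cosmetic.

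One small correction: $\Ent_{n+1}(\{f\ge t\})$ is not uniformly bounded above, because by \eqref{En+1cK} it tends to $+\infty$ as the level sets shrink. You do not need that claim, however. The levelwise bound from \eqref{n+1}, together with \eqref{inclusion} and the boundedness of $s^2\log(1/s)$ near $0$, already makes the positive part $\nu_f$-integrable, and you essentially give this justification as well.
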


\begin{proof}

By \eqref{En+1cK}, we have the second equality.
%\begin{equation*}
%        \Ent_{n+1}(\tilde{B}_f)=\Ent_{n+1}(B^n)+\frac{n+1}{n}\log\Big(\frac{\omega_n}{|\tilde{B}_f|}\Big).
%\end{equation*}
%Since $|\tilde{B}_f|=\frac{ \omega_n I_{n+1}(f)}{(n+1)\|f\|_1}$, we obtain the second equality. 
    Recall that $\Ent_{n+1}(K)\leq \Ent_{n+1}(B_K)$, where $B_K$ is the centered ball with the same volume as $K$. Since $\nu_f=\nu_{f^{\star}}$, it follows from the geometric chord entropy inequality \eqref{n+1} that
    \begin{equation*}
        \begin{aligned}
            \Ent_{n+1}(f)&=\int_{0}^{\|f\|_{\infty}}\Ent_{n+1}(\{f\ge t \})d\nu_f(t)\\
            &\leq \int_0^{\|f\|_{\infty}}\Ent_{n+1}(\{f^{\star}\ge t\})d\nu_{f^{\star}}(t)\\
            &=\int_{0}^{\|f\|_{\infty}}\Big(\Ent_{n+1}(B^n)+\frac{n+1}{n}\log\Big(\frac{\omega_n}{|\{f^{\star}\ge t\}|}\Big)\Big)d\nu_{f^{\star}}(t)\\
            &=\Ent_{n+1}(B^n)+\frac{n+1}{n}\log\omega_n-\frac{(n+1)^2}{n\omega_nI_{n+1}(f)}\int_0^{\|f\|_{\infty}}|\{f\ge t\}|^2\log|\{f\ge t\}|dt,
        \end{aligned}
    \end{equation*}
    where the last equality is from the fact that $|\{f^{\star}\ge t\}|=|\{f\ge t\}|$ and $I_{n+1}(f)=I_{n+1}(f^{\star})$. By \eqref{n+1}, equality holds if and only if superlevel sets $\{f\ge t\}$ are centered balls for almost all $t$.
    
    Since $h(t)=t\log t$ is a convex function on $(0,\infty)$, Jensen's inequality implies that
    \begin{equation*}
        \begin{aligned}
            \frac{1}{\|f\|_1}\int_0^{\|f\|_\infty}|\{f\ge t\}|^2\log |\{f\ge t\}|dt&=\int_0^{\|f\|_{\infty}}h(|\{f\ge t\}|)d\mu_f(t)\\
            &\ge h\Big(\int_0^{\|f\|_\infty}|\{f\ge t\}|d\mu_f(t)\Big)\\
            &=\frac{\omega_nI_{n+1}(f)}{(n+1)\|f\|_1}\log\Big(\frac{\omega_nI_{n+1}(f)}{(n+1)\|f\|_1}\Big),
        \end{aligned}
    \end{equation*}
    where equality holds if and only if $|\{f\ge t\}|$ is a constant. This implies that $f=c\chi_E$ where $c>0$ and $E\subset\rn$ is a measurable set.
    
    Hence we have
    \begin{equation}\label{En+1f-step}
        \begin{aligned}
            \Ent_{n+1}(f)&\leq \Ent_{n+1}(B^n)+\frac{n+1}{n}\log\omega_n-\frac{n+1}{n}\log\Big(\frac{\omega_nI_{n+1}(f)}{(n+1)\|f\|_1}\Big)\\
            &=\Ent_{n+1}(B^n)+\frac{n+1}{n}\log\Big(\frac{(n+1) \|f\|_1}{I_{n+1}(f)}\Big),
        \end{aligned}
    \end{equation}
    and equality holds if and only if superlevel sets $\{f\ge t\}$ are balls for almost all $t\in[0,\|f\|_\infty]$ and $|\{f\ge t\}|$ is a constant.  Therefore equality holds in \eqref{En+1f-step} if and only if $f=c\chi_{B}$ with $c>0$ and $B\subset\rn$ a centered ball such that $c|B|=\|f\|_1$.
\end{proof}

%{\cb According to the proof, maybe the coefficient in the definition of $\Ent_{n+1}(K)$ could be simplified. Deleting the n+1 will simplify the notation and simplify the result in the following way:
%\[\Ent_{n+1}(f)\leq \Ent_{n+1}(B^n)+\frac{1}{n}\log\Big(\frac{(n+1)\|f\|_1}{I_{n+1}(f)}\Big)\]}

\section{A logarithmic Sobolev-type inequality}\label{log Sob ineq}

This section is devoted to the proof of Theorem C, which extends the chord
entropy inequality \eqref{Ent1-proof} from the log-concave and
quasi-concave settings to a broader fractional Sobolev class. We first unfold
the definition of \(\Ent_1(f)\) and rewrite \eqref{Ent1-proof} in the
following form:
\begin{equation}\label{limiting-0}
    \bar\sigma_0+\frac{1}{n}\log\|f\|_1
    -\frac{1}{n\|f\|_1}\int_{\rn}f(x)\log f(x)\,dx
    \geq
    \frac{1}{n\omega_n}\int_{\sn}\log\rho_{\rR_0f}(u)\,du .
\end{equation}
This reformulation follows from Lemma \ref{ex-Xu3.2}. The sharp constant is
given by
\begin{equation*}
    \bar\sigma_0
    =
    \frac{1}{n\omega_n}
    \frac{d}{d\alpha}\Big|_{\alpha=0}\alpha\sigma_{n,\alpha}
    =
    -\Ent_1(B^n)-\frac{1}{n}\log\omega_n-1.
\end{equation*}
Recall also that, by Corollary \ref{ex-C1}, inequality \eqref{limiting-0}
holds for every non-negative, non-zero quasi-concave function
\(f\in L^1(\rn)\) such that \(f\log f\in L^1(\rn)\) and
\(\log\rho_{\rR_0f}\in L^1(\sn)\).

We first prove \eqref{limiting-0} for non-negative, non-zero functions
\(f\in W^{\beta,1}(\rn)\) with compact support and satisfying \(f\log f\in L^1(\rn)\). The main step is the following rearrangement inequality:
\begin{equation}\label{vlog-rri}
    \tilde{V}_{\log}(B^n,\rR_0 f^{\star})\geq \tilde{V}_{\log}(B^n,\rR_0f),
\end{equation}
where
\[\tilde{V}_{\log}(B^n,\rR_0f):=\frac{1}{n\omega_n}\int_{\sn}\log\rho_{\rR_0f}(u)du.\]

Assuming \eqref{vlog-rri}, we proceed as follows. By the P\'olya--Szeg\H{o} inequality established in
\cite{AL},
\[\di\frac{|f^{\star}(x)-f^{\star}(y)|}{|x-y|^{n+\beta}}dxdy\leq \di\frac{|f(x)-f(y)|}{|x-y|^{n+\beta}}dxdy<\infty.\]
Hence \(f^{\star}\in W^{\beta,1}(\rn)\). Since \(f\) has compact support,
so does \(f^\star\). Therefore $f^{\star}\in L^{\frac12}(\rn)\cap W^{\beta,1}(\rn)$ and Lemma \ref{ex-R0f} gives $\log\rho_{\rR_0f^{\star}}\in L^1(\sn)$. 

Since \(f^\star\) is quasi-concave, and symmetric decreasing rearrangement
preserves both the \(L^1\)-norm and the integral of \(f\log f\), we may apply
\eqref{limiting-0} to \(f^\star\). Combining the resulting inequality with
\eqref{vlog-rri}, we obtain
\begin{equation}\label{app}
    \bar\sigma_0+\frac{1}{n}\log\|f\|_1-\frac{1}{n\|f\|_1}\int_{\rn}f(x)\log f(x)dx\ge\tilde{V}_{\log}(B^n,\rR_0 f^{\star})\geq \tilde{V}_{\log}(B^n,\rR_0f).
\end{equation}

We shall first prove rearrangement inequalities for two auxiliary truncated
quantities, which will then be combined to obtain \eqref{vlog-rri}. Related results may be found in \cite{Cai, HL1}. 

%{\cb Recall that, in Section \ref{rdm-fcn}, we introduced
%\[\mathcal{E}_u(f)=\int_0^1\frac{1}{r}\int_{\rn} |f(x)-f(x+ru)|dxdr\]
%and
%\[\mathcal{Q}_u(f)=\int_1^{\infty}\frac{1}{r}\int_{\rn} \min\{f(x), f(x+ru)\}dxdr.\]
%We shall prove rearrangement inequalities for
%\(\int_{\sn}\mathcal{E}_u(f)\,du\) and
%\(\int_{\sn}\mathcal{Q}_u(f)\,du\), respectively.}

\begin{lem}\label{P-S01}
Let $f\in L^1(\rn)$ be a non-negative, non-zero function, and
\begin{equation}\label{Ef}
    \mathcal{E}_1(f):= \di |f(x)-f(y)|\max\{0,|x-y|^{-n}-1\}dxdy.
\end{equation}
If $\mathcal{E}_1(f)<\infty$, then
\[\mathcal{E}_1(f^\star)\leq \mathcal{E}_1(f).\]
Equality holds if and only if the superlevel sets \(\{f\ge t\}\) are balls
for almost all \(t>0\), up to null sets.
\end{lem}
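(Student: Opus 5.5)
The plan is to reduce the claim to the Riesz rearrangement inequality applied level by level, following the pattern of Lemma \ref{q>n}. The kernel $k(z)=\max\{0,|z|^{-n}-1\}$ is radially symmetric and decreasing, and it is supported in $B^n$. Using the layer cake identities
\[
|f(x)-f(y)| = f(x)+f(y)-2\min\{f(x),f(y)\},\qquad \min\{f(x),f(y)\}=\int_0^\infty \chi_{\{f\ge t\}}(x)\chi_{\{f\ge t\}}(y)\,dt,
\]
one can write, at least formally,
\[
|f(x)-f(y)| = \int_0^\infty \big(\chi_{\{f\ge t\}}(x)+\chi_{\{f\ge t\}}(y)-2\chi_{\{f\ge t\}}(x)\chi_{\{f\ge t\}}(y)\big)\,dt .
\]
The total mass $\int k=\infty$ prevents splitting this identity directly. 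To avoid that, I will represent the kernel as
\[
k(z)=\int_1^\infty \chi_{s^{-1/n}B^n}(z)\,ds ,
\]
where $s^{-1/n}\le 1$, so each ball $s^{-1/n}B^n$ has finite measure $\omega_n/s$. For fixed $t$ and $s$, with $E=\{f\ge t\}$ of finite measure (since $f\in L^1$) and $B_s=s^{-1/n}B^n$, Fubini gives
\[
\di |\chi_E(x)-\chi_E(y)|\chi_{B_s}(x-y)\,dxdy = 2|E||B_s| - 2\di \chi_E(x)\chi_{B_s}(x-y)\chi_E(y)\,dxdy .
\]
All terms on the right are finite.

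By Tonelli, $\mathcal{E}_1(f)=\int_0^\infty\int_1^\infty\big(2|E_t||B_s|-2J(E_t,s)\big)\,ds\,dt$, where $J$ denotes the triple integral; the integrand is non-negative. Theorem \ref{rri} gives $J(E_t,s)\le J(E_t^\star,s)$, and $|E_t^\star|=|E_t|$. Hence each integrand does not increase under symmetrization. Since $\{f^\star\ge t\}=E_t^\star$, integrating yields $\mathcal{E}_1(f^\star)\le\mathcal{E}_1(f)$. The finiteness hypothesis is only needed so that the equality case can be analyzed.

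For equality, assume $\mathcal{E}_1(f)<\infty$. Then the non-negative differences $2J(E_t^\star,s)-2J(E_t,s)$ have integral zero, so for almost every $t$ they vanish for almost every $s>1$. Fix such a $t$ with $|E_t|>0$, and let $a$ be the radius of $E_t^\star$. Choose $s$ large, so that $c=s^{-1/n}<2a$, which makes $0=|a-a|<c<2a$. Theorem \ref{rri-cha}, applied with $A=C=E_t$ and $B=B_s$, then forces $E_t=x_0+aD$ and $B_s=x_1+cD$ with $D$ a centered ellipsoid. Since $B_s$ is a centered ball, $D$ is a ball, so $E_t$ is a ball up to null sets. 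The converse is immediate: if the superlevel sets are balls, they need not be concentric in general, but the representation shows that equality requires exactly $J(E_t,s)=J(E_t^\star,s)$, and this holds when $E_t$ is a translate of $E_t^\star$.

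The main point needing care is the splitting of $|f(x)-f(y)|$ into finite pieces. This is legitimate only after restricting to a fixed $t$ and $s$ where all three terms are finite, and this is why the kernel is written through balls of radius at most $1$ rather than being split globally.
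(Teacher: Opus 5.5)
Your proposal is correct and follows essentially the same route as the paper. Like the paper, you write $\max\{0,|z|^{-n}-1\}=\int_1^\infty\chi_{s^{-1/n}B^n}(z)\,ds$, use the layer-cake formula to reduce $\mathcal{E}_1(f)$ to $2\int\int\bigl(|E_t||B_s|-J(E_t,s)\bigr)\,ds\,dt$, apply the Riesz rearrangement inequality level by level, and invoke Burchard's theorem for large $s$ to characterize equality. Beyond that, you justify the finiteness and non-negativity needed for the splitting more carefully, and you verify the converse direction through translation invariance of $J(E_t,s)$, both of which the paper leaves implicit.
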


\begin{proof}
    Since 
    $|z|^{-n}=\int_0^{\infty}\chi_{r^{-\frac{1}{n}}B^n}(z)dr$, the kernel function is
    \[\max\{0, |x-y|^{-n}-1\}=\int_1^{\infty}\chi_{r^{-\frac{1}{n}}B^n}(x-y)dr.\]
    Therefore
    \[\mathcal{E}_1(f)=\int_1^{\infty}\di |f(x)-f(y)|\chi_{r^{-\frac{1}{n}}B^n}(x-y)dxdydr.\]
    Moreover, by 
    \begin{equation*}
        |f(x)-f(y)|=\int_0^{\infty}|\chi_{ \{f\ge t\} }(x)- \chi_{ \{f\ge t\} }(y)|dt,
    \end{equation*}
    we have
    \begin{equation*}
        \begin{aligned}
        \mathcal{E}_1(f)&=\int_1^{\infty}\int_0^{\infty}\di|\chi_{ \{f\ge t\} }(x)-\chi_{ \{f\ge t\} }(y) |\chi_{r^{-\frac{1}{n}}B^n}(x-y)dxdydtdr\\
            &=2\int_1^{\infty}\int_0^{\infty}\di \Big(\chi_{ \{f\ge t\} }(x)-\chi_{ \{f\ge t\} }(x)\chi_{ \{f\ge t\} }(y)\Big)\chi_{r^{-\frac{1}{n}}B^n}(x-y)dxdydtdr\\
            &=2\int_1^{\infty}\int_0^{\infty}\Big(r^{-1}\omega_n|\{f\ge t\}|-\di\chi_{ \{f\ge t\} }(x)\chi_{r^{-\frac{1}{n}}B^n}(x-y)\chi_{\{f\ge t\} }(y)dxdy\Big)dtdr.
        \end{aligned}
    \end{equation*}

    By the Riesz rearrangement inequality in Theorem \ref{rri-cha}, we have
    \begin{equation}\label{rri s}
        \begin{aligned}
        \di \chi_{\{f\ge t\}}(x)\chi_{r^{-\frac{1}{n}}B^n}&(x-y)\chi_{\{f\ge t\}}(y)dxdy\\
        &\leq \di \chi_{\{f^{\star}\ge t\}}\chi_{r^{-\frac{1}{n}}B^n}(x-y)\chi_{\{f^{\star}\ge t\}}(y)dxdy.
    \end{aligned}
    \end{equation}
    Since $|\{f^{\star}\ge t\}|=|\{f\ge t\}|$, it follows that
    \begin{equation*}
        \begin{aligned}
            \mathcal{E}_1(f)&\ge2\int_1^{\infty}\int_0^{\infty}\Big(r^{-1}\omega_n|\{f^{\star}\ge t\}|-\di\chi_{ \{f^{\star}\ge t\} }(x)\chi_{r^{-\frac{1}{n}}B^n}(x-y)\chi_{\{f^{\star}\ge t\} }(y)dxdy\Big)dtdr\\
            &=\int_1^{\infty}\di|f^{\star}(x)-f^{\star}(y)|\chi_{r^{-\frac{1}{n}}B^n}(x-y)dxdy=\mathcal{E}_1(f^{\star}).
        \end{aligned}
    \end{equation*}
    
    If equality holds, then equality holds in \eqref{rri s} for almost all $(r,t)\in(1,\infty)\times(0,\infty)$. For such $(r,t)$ with $r>1$ sufficiently large, the assumptions of Theorem \ref{rri-cha} are satisfied, and hence
    \[\{f\ge t\}=x_0+a B,~~~~ r^{-\frac{1}{n}}B^n=b B\]
    where $B$ is a centered ball, $x_0\in\rn$, and $a,b>0$. This concludes the proof.
\end{proof}

By a similar application of the Riesz rearrangement inequality, we obtain the following lemma.

\begin{lem}\label{P-S1infty}
    Let $f\in L^1(\rn)$ be a non-negative, non-zero function, and
\begin{equation}\label{Qf}
    \mathcal{Q}_1(f):= \di \min\{f(x), f(y)\}\min\{1,|x-y|^{-n}\}dxdy.
\end{equation}
If $\mathcal{Q}_1(f)<\infty$, then $\mathcal{Q}_1(f)\leq \mathcal{Q}_1(f^\star)$.
\end{lem}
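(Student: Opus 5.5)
We follow the proof of Lemma \ref{P-S01}, but now with the layer-cake decomposition of $\min\{f(x),f(y)\}$ from the proof of Theorem A. Since
\[|z|^{-n}=\int_0^{\infty}\chi_{r^{-\frac1n}B^n}(z)\,dr,\]
truncating the $r$-integral at $r=1$ gives
\[\min\{1,|z|^{-n}\}=\int_0^{1}\chi_{r^{-\frac1n}B^n}(z)\,dr,\qquad z\neq o.\]
Indeed, $\chi_{r^{-1/n}B^n}(z)=1$ exactly when $r\le |z|^{-n}$, so the integral equals $\min\{1,|z|^{-n}\}$. We also use
\[\min\{f(x),f(y)\}=\int_0^{\infty}\chi_{\{f\ge t\}}(x)\chi_{\{f\ge t\}}(y)\,dt.\]

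By Tonelli's theorem (all integrands are non-negative) we obtain
\[\mathcal{Q}_1(f)=\int_0^1\int_0^{\infty}\di \chi_{\{f\ge t\}}(x)\chi_{r^{-\frac1n}B^n}(x-y)\chi_{\{f\ge t\}}(y)\,dx\,dy\,dt\,dr.\]
Since $f\in L^1(\rn)$, the set $\{f\ge t\}$ has finite measure for every $t>0$. For each such $t$ and each $r\in(0,1)$, the Riesz rearrangement inequality (Theorem \ref{rri}) applies to the three characteristic functions. The ball $r^{-1/n}B^n$ is already centered, so it equals its own Schwarz symmetral, and $\{f\ge t\}^\star=\{f^\star\ge t\}$. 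Hence the inner double integral is dominated by the same expression with $\{f\ge t\}$ replaced by $\{f^\star\ge t\}$.

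Integrating this bound over $t\in(0,\infty)$ and $r\in(0,1)$, and then reversing the two layer-cake identities for $f^\star$, gives $\mathcal{Q}_1(f)\le\mathcal{Q}_1(f^\star)$.

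The only point needing care is that the superlevel sets of $f^\star$ coincide with $\{f\ge t\}^\star$ up to null sets, so that the layer-cake identity for $\min\{f^\star(x),f^\star(y)\}$ can be reversed. This follows from the definition of $f^\star$ in Section \ref{sym}. The finiteness hypothesis $\mathcal{Q}_1(f)<\infty$ is not needed for the inequality itself; if $\mathcal{Q}_1(f^\star)=\infty$ the inequality holds trivially. No equality characterization is claimed, so Theorem \ref{rri-cha} is not required.
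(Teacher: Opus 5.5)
Your proof is correct and follows the paper's argument essentially verbatim. Both use the truncated layer-cake representation $\min\{1,|z|^{-n}\}=\int_0^1\chi_{r^{-1/n}B^n}(z)\,dr$, the layer-cake formula for $\min\{f(x),f(y)\}$, and the Riesz rearrangement inequality applied for each $(r,t)$ before reversing the layer-cake identities for $f^\star$. Citing Theorem~\ref{rri} rather than Burchard's Theorem~\ref{rri-cha}, as you do, is the appropriate reference here, and your remark that the finiteness hypothesis is not needed for the inequality itself is also correct.
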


\begin{proof}
    We first rewrite the kernel function as
    \[\min\{1,|x-y|^{-n}\}=\int_0^{1}\chi_{r^{-\frac{1}{n}}B^n}(x-y)dr,\]
    which yields
    \begin{equation*}
        \begin{aligned}
            \mathcal{Q}_1(f)=\int_0^1\di\min\{f(x),f(y)\}\chi_{r^{-\frac{1}{n}}B^n}(x-y)dxdydr.
        \end{aligned}
    \end{equation*}
    By the layer cake representation of $\min\{f(x),f(y)\}$ and Theorem \ref{rri-cha}, we have
    \begin{equation*}
        \begin{aligned}
            \mathcal{Q}_1(f)
            &=\int_0^1\int_0^{\infty}\di\chi_{ \{f\ge t\} }(x)\chi_{r^{-\frac{1}{n}}B^n}(x-y)\chi_{ \{f\ge t\} }(y)dxdydtdr\\
            &\leq\int_0^1\int_0^{\infty}\di\chi_{ \{f^{\star}\ge t\} }(x)\chi_{r^{-\frac{1}{n}}B^n}(x-y)\chi_{ \{f^{\star}\ge t\} }(y)dxdydtdr\\
            &=\int_0^1\di\min\{f^{\star}(x),f^{\star}(y)\}\chi_{r^{-\frac{1}{n}}B^n}(x-y)dxdydr=\mathcal{Q}_1(f^{\star}),
        \end{aligned}
    \end{equation*}
    which concludes the proof.
    %If equality holds, for almost all $(r,t)\in (0,1)\times (0,\infty)$, there is
    %\begin{align*}
    %    \di\chi_{ \{f\ge t\} }(x)&\chi_{r^{-\frac{1}{n}}B^n}(x-y)\chi_{ \{f\ge t\} }(y)dxdydtdr\\
    %        &=\di\chi_{ \{f^{\star}\ge t\} }(x)\chi_{r^{-\frac{1}{n}}B^n}(x-y)\chi_{ \{f^{\star}\ge t\} }(y)dxdydtdr.
    %\end{align*}
    %Then for such $(r,t)$ with $r>1$ sufficiently large, the assumptions of Theorem \ref{rri-cha} are fulfilled and thus
    %\[\{f\ge t\}=x_0+a B,~~~~ r^{-\frac{1}{n}}B^n=b B\]
    %where $B$ is a centered ball and $x_0\in\rn$ and $a,b>0$, which concludes the proof.
\end{proof}

We are now ready to prove \eqref{vlog-rri}. Recall that, in Section \ref{rdm-fcn}, we defined
\[\mathcal{E}_u(f)=\int_0^1\frac{1}{r}\int_{\rn}|f(x)-f(x+ru)|dxdr\]
and
\[\mathcal{Q}_u(f)=\int_1^\infty\frac{1}{r}\int_{\rn}\min\{f(x),f(x+ru)\}dxdr. \]
\begin{thm}\label{vlog-rri-thm}
    Let $\beta\in(0,1)$ be fixed, and $f\in W^{\beta,1}(\rn)$ be a non-negative, non-zero function. Then
    \[\tilde{V}_{\log}(B^n, \rR_0f^{\star})\ge \tilde{V}_{\log}(B^n, \rR_0f),\]
    whenever the right-hand side is finite. Equality holds if and only if the superlevel sets \(\{f\ge t\}\) are balls for almost all \(t>0\), up to null sets.
\end{thm}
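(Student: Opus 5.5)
We start from the decomposition \eqref{0EQ} established in the proof of Lemma \ref{ex-R0f}:
\[\log\rho_{\rR_0f}(u)=C-\frac{1}{2\|f\|_1}\mathcal{E}_u(f)+\frac{1}{\|f\|_1}\mathcal{Q}_u(f),\]
valid for every $u\in\sn$ as an extended real number, with $\mathcal{E}_u(f)$ finite by Lemma \ref{E-def}. Integrating over $\sn$ gives
\[n\omega_n\tilde V_{\log}(B^n,\rR_0f)=n\omega_nC-\frac{1}{2\|f\|_1}\int_{\sn}\mathcal{E}_u(f)\,du+\frac{1}{\|f\|_1}\int_{\sn}\mathcal{Q}_u(f)\,du.\]
Since $\int_{\sn}\mathcal{E}_u(f)du<\infty$ by \eqref{E-integrable}, the assumption that the right-hand side is finite forces $\int_{\sn}\mathcal{Q}_u(f)du<\infty$. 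Because $\|f^\star\|_1=\|f\|_1$, the claim therefore reduces to two inequalities:
\[\int_{\sn}\mathcal{E}_u(f^\star)du\le\int_{\sn}\mathcal{E}_u(f)du,\qquad \int_{\sn}\mathcal{Q}_u(f^\star)du\ge\int_{\sn}\mathcal{Q}_u(f)du.\]

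Next we identify these spherical integrals with the truncated double integrals $\mathcal{E}_1$ and $\mathcal{Q}_1$ from Lemmas \ref{P-S01} and \ref{P-S1infty}. Pass to polar coordinates $y=x+ru$, with $dy=r^{n-1}dr\,du$. Then
\[\int_{\sn}\mathcal{E}_u(f)du=\di |f(x)-f(y)|\,|x-y|^{-n}\chi_{\{|x-y|<1\}}\,dxdy,\]
\[\int_{\sn}\mathcal{Q}_u(f)du=\di\min\{f(x),f(y)\}\,|x-y|^{-n}\chi_{\{|x-y|\ge1\}}\,dxdy.\]
These kernels are not exactly those of $\mathcal{E}_1$ and $\mathcal{Q}_1$; the differences can be computed explicitly.

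\emph{The $\mathcal{E}$ term.} The kernel of $\mathcal{E}_1$ is $(|z|^{-n}-1)\chi_{\{|z|<1\}}$. Hence
\[\int_{\sn}\mathcal{E}_u(f)du=\mathcal{E}_1(f)+\di|f(x)-f(y)|\chi_{\{|x-y|<1\}}dxdy.\]
The last term equals $2\int_0^\infty\big(\omega_n|\{f\ge t\}|-\di\chi_{\{f\ge t\}}(x)\chi_{B^n}(x-y)\chi_{\{f\ge t\}}(y)dxdy\big)dt$. By Theorem \ref{rri}, this term does not increase under $\star$.

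\emph{The $\mathcal{Q}$ term.} The kernel of $\mathcal{Q}_1$ is $\min\{1,|z|^{-n}\}$. Hence
\[\int_{\sn}\mathcal{Q}_u(f)du=\mathcal{Q}_1(f)-\di\min\{f(x),f(y)\}\chi_{\{|x-y|<1\}}dxdy.\]
This is the problematic direction: the subtracted term increases under rearrangement. To handle it, I would combine the two pieces rather than treat them separately. The two unit-ball pieces contribute
\[-\frac{1}{2\|f\|_1}\di|f(x)-f(y)|\chi_{B^n}(x-y)-\frac{1}{\|f\|_1}\di\min\{f(x),f(y)\}\chi_{B^n}(x-y).\]
Using $|a-b|+2\min\{a,b\}=a+b$, their sum is $-\frac{1}{2\|f\|_1}\di(f(x)+f(y))\chi_{B^n}(x-y)=-\omega_n$, a constant. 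So the comparison reduces exactly to Lemma \ref{P-S01} and Lemma \ref{P-S1infty}, whose hypotheses $\mathcal{E}_1(f)<\infty$ and $\mathcal{Q}_1(f)<\infty$ follow from the finiteness just established.

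For the equality case, equality in the full inequality forces equality in Lemma \ref{P-S01}, since both lemmas point in the favourable direction. Lemma \ref{P-S01} then shows that the superlevel sets are balls. Conversely, if all superlevel sets are balls (for instance concentric ones), then $\mathcal{E}_1$ and $\mathcal{Q}_1$ are unchanged by $\star$; this point needs a short translation-invariance check when the balls are not concentric.

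The step I expect to be the main obstacle is the bookkeeping of the constants and the careful justification of the cancellation, since every term must be finite before subtracting. Finiteness of the unit-ball terms is immediate from $f\in L^1$.
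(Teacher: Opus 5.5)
Your proposal is correct and follows the paper's proof essentially step for step: you integrate \eqref{0EQ} over $\sn$, use polar coordinates to write $\int_{\sn}\mathcal{E}_u(f)\,du$ and $\int_{\sn}\mathcal{Q}_u(f)\,du$ as $\mathcal{E}_1(f)$ and $\mathcal{Q}_1(f)$ plus unit-ball terms, collapse those terms to a constant via $|a-b|+2\min\{a,b\}=a+b$, conclude with Lemmas \ref{P-S01} and \ref{P-S1infty}, and read off the equality case from Lemma \ref{P-S01}. Two points you make explicit are left implicit in the paper: the finiteness bookkeeping (deducing $\mathcal{Q}_1(f)<\infty$ from finiteness of the right-hand side), and the converse direction of the equality case via translation invariance of the level-set representations.
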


\begin{proof}
    By \eqref{0EQ}, there is a constant $C$ such that
    \begin{equation}\label{step1}
        \tilde{V}_{\log}(B^n,\rR_0f)=C-\frac{1}{2n\omega_n\|f\|_1}\int_{\sn}\mathcal{E}_u(f)du+\frac{1}{n\omega_n\|f\|_1}\int_{\sn}\mathcal{Q}_u(f)du.
    \end{equation}
    By polar coordinates, the definition of $\mathcal{E}_u(f)$ and the definition of $\mathcal{E}_1(f)$ given in \eqref{Ef},
    \begin{equation}\label{step2}
        \int_{\sn}\mathcal{E}_u(f)du=\mathcal{E}_1(f)+\di |f(x)-f(y)|\chi_{B^n}(x-y)dxdy.
    \end{equation}
    Similarly, by polar coordinates, the definition of $\mathcal{Q}_u(f)$ and the definition of $\mathcal{Q}_1(f)$ given in \eqref{Qf},
    \begin{equation}\label{step3}
        \int_{\sn}\mathcal{Q}_u(f)du=\mathcal{Q}_1(f)-\di \min\{f(x),f(y)\}\chi_{B^n}(x-y)dxdy.
    \end{equation}

    Since
    \[\frac{1}{2}|f(x)-f(y)|+\min\{f(x), f(y)\}=\frac{1}{2}(f(x)+f(y)),\]
    by \eqref{step1}, \eqref{step2} and \eqref{step3}, we obtain 
    \[\tilde{V}_{\log}(B^n,\rR_0f)=C-\frac{1}{2n\omega_n\|f\|_1}\mathcal{E}_1(f)+\frac{1}{n\omega_n\|f\|_1}\mathcal{Q}_1(f)-\frac{1}{n}.\]
    Since $\|f\|_1=\|f^{\star}\|_1$, Lemma \ref{P-S01} and Lemma \ref{P-S1infty} imply that
    \[\tilde{V}_{\log}(B^n,\rR_0f^\star)\ge \tilde{V}_{\log}(B^n,\rR_0f).\]
    If equality holds in the above inequality, then $\mathcal{E}_1(f)=\mathcal{E}_1(f^\star)$. Lemma 8.1 thereby implies that the superlevel sets \(\{f\ge t\}\) are balls
for almost all \(t>0\), up to null sets.
\end{proof}

By Theorem \ref{vlog-rri-thm} and Corollary \ref{ex-C1}, we obtain the following lemma.

\begin{lem}\label{C-compact support}
    Let $\beta\in (0,1)$ be fixed, and $f\in W^{\beta,1}(\rn)$ be a non-negative, non-zero function with compact support. Suppose $f\log f\in L^1(\rn)$. Then
    \[\bar\sigma_{0}+\frac{1}{n}\log\|f\|_1-\frac{1}{n\|f\|_1}\int_{\rn}f(x)\log f(x)dx\ge \frac{1}{n\omega_n}\int_{\sn}\log\rho_{\rR_0f}(u)du.\]
\end{lem}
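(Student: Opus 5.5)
The argument follows the outline given just before Lemma~\ref{P-S01}: pass from $f$ to its Schwarz symmetral $f^{\star}$, apply the quasi-concave entropy inequality to $f^{\star}$, and then return to $f$ through the rearrangement inequality of Theorem~\ref{vlog-rri-thm}.

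\textbf{Step 1: $f^{\star}$ lies in the right class.} By the P\'olya--Szeg\H{o} inequality of \cite{AL}, $f^{\star}\in W^{\beta,1}(\rn)$. Since $f$ has compact support, so does $f^{\star}$, and $f^{\star}$ is bounded wherever $f$ is. Integrability of $f^{1/2}$ on a bounded set follows from H\"older's inequality and $f\in L^1$, so $f^{\star}\in L^{\frac12}(\rn)$. Lemma~\ref{ex-R0f} then gives $\log\rho_{\rR_0f^{\star}}\in L^1(\sn)$; in particular $\tilde V_{\log}(B^n,\rR_0f^{\star})$ is finite. Equimeasurability gives $\|f^{\star}\|_1=\|f\|_1$ and $\int f^{\star}\log f^{\star}=\int f\log f$, so $f^{\star}\log f^{\star}\in L^1(\rn)$. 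Finally, $f^{\star}$ is quasi-concave because its superlevel sets are centered balls.

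\textbf{Step 2: apply the entropy inequality to $f^{\star}$.} We want to invoke Corollary~\ref{ex-C1} for $f^{\star}$, in the form \eqref{limiting-0}. For this, $\Ent_1(f^{\star})$ must be finite. Lemma~\ref{ex-Xu3.2} extends to quasi-concave functions (Remark~\ref{remE1}) and expresses the level-set part of $\Ent_1(f^{\star})$ as $-1-\tilde V_{\log}(B^n,\rR_0f^{\star})$, which is finite by Step~1. The remaining term $-\frac{1}{n\|f\|_1}\int f^{\star}\log f^{\star}$ is also finite. Unfolding the definition \eqref{E1-def} and the identity $\Ent_1(B_f)=\Ent_1(B^n)+\frac1n\log(\omega_n/\|f\|_1)$ gives
\[
\bar\sigma_0+\frac1n\log\|f\|_1-\frac{1}{n\|f\|_1}\int_{\rn}f\log f\,dx\ \ge\ \tilde V_{\log}(B^n,\rR_0f^{\star}).
\]
Here I use $\|f^{\star}\|_1=\|f\|_1$ and $\int f^{\star}\log f^{\star}=\int f\log f$ to put the left side in terms of $f$.

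\textbf{Step 3: return to $f$.} Theorem~\ref{vlog-rri-thm} gives $\tilde V_{\log}(B^n,\rR_0f^{\star})\ge\tilde V_{\log}(B^n,\rR_0f)$ whenever the right-hand side is finite; chaining this with Step~2 yields the claim. If $\tilde V_{\log}(B^n,\rR_0f)$ is not finite, the proof of Theorem~\ref{vlog-rri-thm} still applies, because it only uses the decomposition \eqref{0EQ} together with Lemmas~\ref{P-S01} and~\ref{P-S1infty}. By Lemma~\ref{E-def}, $\mathcal{E}_u(f)$ is finite and integrable, so the only possible infinite value is $+\infty$, arising from $\mathcal Q_1(f)$. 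But Lemma~\ref{P-S1infty} bounds $\mathcal Q_1(f)$ by $\mathcal Q_1(f^{\star})<\infty$, which rules this out. Hence the right-hand side is finite and the inequality holds in all cases.

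\textbf{Main obstacle.} I expect the delicate step to be the bookkeeping in Step~2: verifying that the level-set entropy integral of $f^{\star}$ is really finite, so that Corollary~\ref{ex-C1} applies. This requires that the $\mu_{f^{\star}}$-integral of $\log\rho_{\rR_0\{f^{\star}\ge t\}}$ coincides with the explicit formula of Lemma~\ref{ex-R0f}. I would justify this by repeating the Fubini computation behind \eqref{log-R0}, using the covariogram formula of Proposition~\ref{R0K-explicit} for the balls $\{f^{\star}\ge t\}$. The interchange of integrals is legitimate because the positive and negative parts are controlled separately by $\mathcal Q_u(f^{\star})$ and $\mathcal E_u(f^{\star})$.
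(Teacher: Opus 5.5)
Your proposal is correct and follows the paper's argument: the paper obtains \eqref{app} by symmetrizing, applying \eqref{limiting-0} (via Corollary~\ref{ex-C1}) to the quasi-concave $f^{\star}$, and chaining with Theorem~\ref{vlog-rri-thm}. Your Fubini check, identifying the level-set and explicit definitions of $\rR_0 f^{\star}$, fills in a detail the paper leaves implicit.

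One small repair is needed in Step~3. Lemma~\ref{P-S1infty} as stated assumes $\mathcal{Q}_1(f)<\infty$, so using it to prove that finiteness is circular, even though its Tonelli/Riesz proof in fact works in $[0,\infty]$. It is simpler to note that $f$ itself has compact support, so $f\in L^{\frac12}(\rn)$ by your H\"older argument, and Lemma~\ref{ex-R0f} then gives $\log\rho_{\rR_0 f}\in L^1(\sn)$ directly.
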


 \begin{proof}
    It follows from Theorem \ref{vlog-rri-thm} and \eqref{app}.
\end{proof}

To remove the compact-support assumption, we shall use an approximation
argument based on smooth cut-off functions. We first record the required
technical lemma; see also \cite{Cai}.

\begin{lem}\label{cuf-off}
    Let $\beta\in (0,1)$ be fixed and $f\in W^{\beta,1}(\rn)$ be a non-negative, non-zero function. Let $\eta\in C_c^\infty(\rn)$ be a cut-off function such that
$0\le \eta\le1$, with $\eta=1$ on $B^n$ and $\eta=0$ outside $2B^n$. Set
\[\eta_j(x)=\eta(x/j),\qquad f_j=\eta_j f .\]
Then $f_j\in W^{\beta, 1}(\rn)\cap L^{1/2}(\rn)$, and
\begin{equation}\label{mono2}
    \lim_{j\to\infty}\int_{\sn}\log\rho_{\rR_0 f_j}(u)du=\int_{\sn}\log\rho_{\rR_0 f}(u)du .
\end{equation}
\end{lem}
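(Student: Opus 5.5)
The plan has two parts: first check that $f_j$ lies in the right spaces, then prove \eqref{mono2} through the decomposition \eqref{0EQ}. That decomposition gives, for each $j$,
\[
\log\rho_{\rR_0f_j}(u)=C-\frac{1}{2\|f_j\|_1}\mathcal{E}_u(f_j)+\frac{1}{\|f_j\|_1}\mathcal{Q}_u(f_j).
\]
Since $\|f_j\|_1\to\|f\|_1$ by monotone convergence, it suffices to show
\[
\int_{\sn}\mathcal{E}_u(f_j)\,du\to\int_{\sn}\mathcal{E}_u(f)\,du,\qquad \int_{\sn}\mathcal{Q}_u(f_j)\,du\to\int_{\sn}\mathcal{Q}_u(f)\,du.
\]

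\emph{Membership.} The function $f_j$ is supported in $2jB^n$ and lies in $L^1$, so H\"older's inequality on a bounded set gives $f_j\in L^{1/2}(\rn)$. For $f_j\in W^{\beta,1}(\rn)$, I would use the standard splitting
\[
|f_j(x)-f_j(y)|\le \eta_j(x)|f(x)-f(y)|+f(y)|\eta_j(x)-\eta_j(y)|.
\]
The first term is bounded by the seminorm of $f$. For the second, we have $|\eta_j(x)-\eta_j(y)|\le \min\{1,\|\nabla\eta\|_\infty|x-y|/j\}$, and integrating against $|x-y|^{-n-\beta}$ in $x$ gives $c_{n,\beta}\,j^{-\beta}$. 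This yields the bound
\[
[f_j]_{W^{\beta,1}}\le [f]_{W^{\beta,1}}+c\,j^{-\beta}\|f\|_1.
\]

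\emph{The $\mathcal{E}$ term.} By polar coordinates (as in \eqref{step2}),
\[
\int_{\sn}\mathcal{E}_u(g)\,du=\di |g(x)-g(y)|\,|x-y|^{-n}\chi_{B^n}(x-y)\,dx\,dy.
\]
The integrand for $f_j$ converges pointwise to that for $f$. The same splitting dominates it by
\[
|f(x)-f(y)|\,|x-y|^{-n}\chi_{B^n}(x-y)+f(y)\,\frac{\|\nabla\eta\|_\infty}{j}\,|x-y|^{1-n}\chi_{B^n}(x-y).
\]
The first piece is integrable because $|x-y|^{-n}\le|x-y|^{-n-\beta}$ on $B^n$. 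The second has total integral $O(\|f\|_1/j)\to0$. A generalized dominated convergence argument then gives the limit; one can also simply split off the second piece and send it to zero directly.

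\emph{The $\mathcal{Q}$ term.} This is the step I expect to be the main obstacle. Again by polar coordinates,
\[
\int_{\sn}\mathcal{Q}_u(g)\,du=\di\min\{g(x),g(y)\}\,|x-y|^{-n}\chi_{\{|x-y|>1\}}\,dx\,dy.
\]
Because $0\le\eta_j\le1$ and $\eta_j\uparrow$ pointwise (I will choose $\eta$ radially decreasing so that the family is monotone in $j$), we have $\min\{f_j(x),f_j(y)\}\uparrow\min\{f(x),f(y)\}$. Monotone convergence then gives convergence with no integrability assumption, and the limit may be $+\infty$. If $\eta$ is not monotone, I would use Fatou's lemma for the lower bound together with the domination $\min\{f_j(x),f_j(y)\}\le\min\{f(x),f(y)\}$ for the upper bound. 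That requires $\int_{\sn}\mathcal{Q}_u(f)\,du<\infty$, which holds when $f\in L^{1/2}$ by Lemma \ref{Q-def}. In the remaining case $\int_{\sn}\mathcal{Q}_u(f)\,du=\infty$, Fatou still forces both sides of \eqref{mono2} to equal $+\infty$, which is consistent.

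Combining the two limits with $\|f_j\|_1\to\|f\|_1$ yields \eqref{mono2}.
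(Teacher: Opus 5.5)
Your proof is correct and follows essentially the same route as the paper. Both arguments use the decomposition \eqref{0EQ}, the same product-rule splitting with the Lipschitz bound $\|\nabla\eta\|_\infty r/j$ for the $\mathcal{E}$-term, and a monotonicity-type argument for the $\mathcal{Q}$-term.

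The differences are minor:
\begin{itemize}
\item The paper cites Leoni's multiplication theorem for $f_j\in W^{\beta,1}(\rn)$, where you give the direct $j^{-\beta}$ estimate.
\item For the $\mathcal{Q}$-term, the paper sandwiches $f\chi_{jB^n}\le f_j\le f$ and applies monotone convergence to the lower function, where you use Fatou.
\end{itemize}

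A few small slips to fix:
\begin{itemize}
\item Your Fatou-plus-pointwise-domination argument does not need $\int_{\sn}\mathcal{Q}_u(f)\,du<\infty$, so you can drop that remark.
\item $\eta$ is given in the statement, so you cannot choose it radially decreasing. Your Fatou fallback already covers the general case.
\item $\|f_j\|_1\to\|f\|_1$ follows from dominated convergence, not monotone convergence, since $\eta_j$ need not increase in $j$.
\end{itemize}
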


\begin{proof}
Since $\eta_j\in C_c^\infty(\rn)$ and
$f\in W^{\beta,1}(\rn)$, the multiplication property in
\cite[Theorem 6.23]{Leoni} gives $f_j\in W^{\beta,1}(\rn)$. Moreover, $f_j$ has compact support, indicating that $f_j\in L^{1/2}(\rn)$.

    We still use $\mathcal{E}_u(f)$ and $\mathcal{Q}_u(f)$ defined above. Recall that
    \[\int_{\sn}\log\rho_{\rR_0f}(u)du=n\omega_nC-\frac{1}{2\|f\|_1}\int_{\sn}\mathcal{E}_u(f)du+\frac{1}{\|f\|_1}\int_{\sn}\mathcal{Q}_u(f)du.\]
    Since $0\leq f_j\leq f$ and $f_j\rightarrow f$ pointwise, the dominated convergence theorem implies that $\|f_j\|_1\rightarrow \|f\|_1$. Then it suffices to prove that
    \begin{equation}\label{limE}
        \lim_{j\rightarrow\infty}\int_{\sn}\mathcal{E}_u(f_j)du=\int_{\sn}\mathcal{E}_u(f)du,
    \end{equation}
    and
    \begin{equation}\label{limQ}
        \lim_{j\rightarrow\infty}\int_{\sn}\mathcal{Q}_u(f_j)du=\int_{\sn}\mathcal{Q}_u(f)du.
    \end{equation}

    \noindent{Step 1: We aim to show \eqref{limE}.}

    Note that $|\mathcal{E}_u(f_j)-\mathcal{E}_u(f)|\leq \mathcal{E}_u(f_j-f)$. It suffices to prove that
    \[\int_{\sn}\mathcal{E}_u(f_j-f)du\rightarrow 0.\]
    Let $h_j=f_j-f$. A straightforward calculation shows that
    \[|h_j(x)-h_j(x+ru)|\leq|\eta_j(x)-1|\cdot|f(x)-f(x+ru)|+|\eta_j(x)-\eta_j(x+ru)|f(x+ru).\]
    Then we have
    \[\int_{\sn}\mathcal{E}_u(h_j)du\leq \int_{\sn}{\mathrm I}_j(u)du+\int_{\sn}\mathrm{II}_j(u)du,\]
    where
    \begin{equation*}
            \mathrm I_j(u)=\int_0^1\frac{1}{r}\int_{\rn}|\eta_j(x)-1|\cdot|f(x)-f(x+ru)|dxdr
    \end{equation*}
    and
    \begin{equation*}
        \mathrm{II}_j(u)=\int_0^1\frac{1}{r}\int_{\rn}|\eta_j(x)-\eta_j(x+ru)|f(x+ru)dxdr.
    \end{equation*}

    For $\mathrm{I}_j(u)$, since $|\eta_j(x)-1|\leq 1$ and $\eta_j(x)\rightarrow 1$ pointwise we have
    \[|\eta_j(x)-1|\cdot |f(x)-f(x+ru)|\leq |f(x)-f(x+ru)|,\]
    and the left-hand side converges to $0$ pointwise. By Lemma \ref{E-def}, $\mathcal{E}_u(f)$ is finite for all $u\in\sn$. The dominated convergence theorem implies that, for all $u\in\sn$,
    \[\mathrm{I}_j(u)\rightarrow 0,\qquad \mathrm{I}_j(u)\leq \mathcal{E}_u(f).\]
    By Lemma \ref{E-def}, $\mathcal{E}_u(f)\in L^1(\sn)$. Applying the dominated convergence theorem again, 
    \[\int_{\sn}\mathrm{I}_j(u)du\rightarrow 0.\]

    On the other hand, $\eta\in C_c^{\infty}(\rn)$ implies that there is a constant $M$ such that $\|\nabla\eta\|_{\infty}\leq M$. Therefore $|\eta_j(x)-\eta_j(x+ru)|\leq rM/j$, which yields that
    \[\int_{\sn}\mathrm{II}_j(u)du\leq \int_{\sn}\int_0^1\int_{\rn}\frac{M}{j}f(x+ru)dxdrdu=\frac{n\omega_nM}{j}\|f\|_1\rightarrow 0.\]
    Therefore we complete the proof of \eqref{limE}.
    \vskip 2pt

    \noindent{Step 2: We aim to show \eqref{limQ}.}

    We denote by $\bar f_j(x)=f(x)\chi_{jB^n}(x)$. Since $\eta_j(x)\in [0,1]$ and $\eta_j(x)=1$ for $x\in jB^n$, we have $\bar f_j\leq f_j\leq f$, which implies that
    \[\min\{\bar f_j(x),\bar f_j(x+ru)\}\leq \min\{f_j(x), f_j(x+ru)\}\leq \min\{f(x),f(x+ru)\},\]
    and hence that
    \begin{equation}\label{bound}
        \int_{\sn}\mathcal{Q}_u(\bar f_j)du\leq \int_{\sn}\mathcal{Q}_u(f_j)du\leq \int_{\sn}\mathcal{Q}_u(f)du.
    \end{equation}

    Note that
    \[\int_{\sn}\mathcal{Q}_u(\bar f_j)du=\di\frac{ \min\{\bar f_j(x), \bar f_j(y)\} \chi_{[1,\infty)}(|x-y|)}{|x-y|^n}dxdy.\]
    Applying the monotone convergence theorem twice, we have
    \[\lim_{j\rightarrow\infty}\int_{\sn}\mathcal{Q}_u(\bar f_j)du=\int_{\sn}\mathcal{Q}_u(f)du.\]
    Combined with \eqref{bound}, we conclude the proof of \eqref{limQ}.
\end{proof}

As a consequence of the preceding analysis, we now prove Theorem C.

\begin{proof}[Proof of Theorem C]
    Let $f_j$ be defined in Lemma \ref{cuf-off}. Then
    \[f_j(x)\log f_j(x)=\eta_j(x)\big(f(x)\log f(x)\big)+f(x)\big(\eta_j(x)\log \eta_j(x)\big)\]
    Since $f\log f\in L^1(\rn)$ and
    \[\eta_j(x)f(x)\log f(x)\rightarrow f(x)\log f(x),\qquad |\eta_j(x)f(x)\log f(x)|\leq |f(x)\log f(x)|,\]
    the dominated convergence theorem yields that
    \begin{equation}\label{conv-cut1}
        \lim_{j\rightarrow\infty}\int_{\rn}\eta_j(x)f(x)\log f(x)dx=\int_{\rn}f(x)\log f(x)dx.
    \end{equation}
    
    Moreover, with the convention $0\log 0=0$, since $0\leq \eta_j\leq 1$, we have $|\eta_j(x)\log \eta_j(x)|<1$ and $\eta_j(x)\log \eta_j(x)\rightarrow 0$ pointwise. By the dominated convergence theorem,
    \begin{equation*}
        \lim_{j\rightarrow\infty}\int_{\rn}f(x)\eta_j(x)\log\eta_j(x)dx=0.
    \end{equation*}
    Combined with \eqref{conv-cut1}, we obtain
    \begin{equation}\label{conv-log}
        \lim_{j\rightarrow\infty}\int_{\rn}f_j(x)\log f_j(x)dx=\int_{\rn}f(x)\log f(x)dx.
    \end{equation}

    Formula \eqref{conv-log} implies that $f_j\log f_j\in L^1(\rn)$ for sufficiently large $j$. Since $f_j\in W^{\beta,1}(\rn)$ is non-negative, non-zero and has compact support, Lemma \ref{C-compact support} implies that
    \begin{equation*}
        \bar\sigma_0+\frac{1}{n}\log\|f_j\|_1-\frac{1}{n\|f_j\|_1}\int_{\rn}f_j(x)\log f_j(x)dx\geq \frac{1}{n\omega_n}\int_{\sn}\log\rho_{\rR_0f_j}(u)du.
    \end{equation*}
    By $\|f_j\|_1\rightarrow\|f\|_1$, \eqref{conv-log} and Lemma \ref{cuf-off}, letting $j\rightarrow\infty$ gives
    \begin{equation}\label{C-step}
        \bar\sigma_0+\frac{1}{n}\log\|f\|_1-\frac{1}{n\|f\|_1}\int_{\rn}f(x)\log f(x)dx\geq \frac{1}{n\omega_n}\int_{\sn}\log\rho_{\rR_0f}(u)du.
    \end{equation}
    
Taking $\|f\|_1=1$ in \eqref{C-step}, multiplying by $n\omega_n$, and using
\eqref{log0-statement} together with polar coordinates, we obtain
        \[\sigma_0-\omega_n\int_{\rn}f(x)\log f(x)dx\ge \di\frac{\min\{f(x), f(y)\}-f(x)e^{-|x-y|} }{|x-y|^n}dxdy.\]
    
    Note that the finiteness of $\int f\log f$ ensures that $\log\rho_{\rR_0f}\in L^1(\sn)$. Therefore if equality holds in \eqref{C-step}, Theorem \ref{vlog-rri-thm} yields that
    \[\bar\sigma_0+\frac{1}{n}\log\|f^{\star}\|_1-\frac{1}{n\|f\|_1}\int_{\rn}f^{\star}(x)\log f^{\star}(x)dx=\tilde{V}_{\log}(B^n,\rR_0f^{\star})=\tilde{V}_{\log}(B^n,\rR_0f).\]
    Then Corollary \ref{ex-C1}  and Theorem \ref{vlog-rri-thm} give the equality characterization.
\end{proof}

\vskip 10pt

\noindent{\bf Acknowledgements:} 
This research was funded in whole or in part by the Austrian Science Fund (FWF) doi/10.55776/37030. For open access purposes, the authors have applied a CC BY public copyright license to any author accepted manuscript version arising from this submission.

\bibliographystyle{abbrv}
%\bibliography{new_bib}

\begin{thebibliography}{999}

%\bibitem{Pfi}
%R.E. Pfiefer, 
%\emph{Maximum and minimun sets for some geometric mean values}, J.Theoret. Probab., 3, 169-179, 1990.


\bibitem{AL}
F. J. Almgren, Jr. and E. H. Lieb,
\emph{Symmetric decreasing rearrangement is sometimes continuous},
J. Amer. Math. Soc. 2 (1989), 683-773.


\bibitem{ABG}
D. Alonso-Guti\'errez, J. Bernu\'es, B. Gonz\'alez Merino,
\emph{Zhang's inequality for log-concave functions},
Geometric aspects of functional analysis, {V}ol. {I}, Lecture Notes in Math., Springer, Cham (2020), 29-48.


%\bibitem{APM}
%L. Ambrosio, G. De Philippis, and L. Martinazzi, 
%\emph{Gamma-convergence of nonlocal
% perimeter functionals}, Manuscripta Math. 134 (2011), 377403, MR2765717, Zbl
% 1207.49051.

\bibitem{Aub}
T. Aubin,
\emph{Nonlinear Analysis on manifolds: Monge-Amp{\'e}re equations},
Springer, Berlin, 1982.


\bibitem{Aub2}
T. Aubin,
\emph{Probl\`emes isop\'erim\'etriques et espaces de Sobolev},
J. Differential Geom. 11 (1976), 573--598.

%\bibitem{BC}
%F. M. Ba$\rm \hat{e}$ta and X. Cai
%\emph{Affine chord Sobolev inequalities and radial mean bodies for functions},
%arXiv: 2511.12866.


\bibitem{Ball}
K. Ball,
\emph{Isometric problems in $\ell_p$ and sections of convex bodies},
PhD thesis, University of Cambridge, 1987.


%\bibitem{Be1}
%W. Beckner,
%\emph{Sharp Sobolev inequalities on the sphere and the Moser-Trudinger inequality},
%Ann. of Math. 138 (1993), 213-242

\bibitem{BP}
W. Beckner and M. Pearson,
\emph{On sharp Sobolev embedding and the logarithmic Sobolev inequality}, 
Bull. London Math Soc. 30 (1998), 80-84.

\bibitem{BH}
S. G. Bobkov and C. Houdr\'e,
\emph{Some connections between isoperimetric and Sobolev-type inequalities},
Mem. Amer. Math. Soc. 129 (1997), N. 616.


\bibitem{BobkovMadiman}
S. G. Bobkov and M. Madiman,
\emph{The entropy per coordinate of a random vector is highly constrained under convexity conditions},
IEEE Trans. Inform. Theory {57} (2011), 4940-4954.


%\bibitem{BBM}
%J. Bourgain, H. Brezis, and P. Mironescu, 
%\emph{Another look at Sobolev spaces},
% In: Optimal Control
% and Partial Differential Equations (J. L. Menaldi, E. Rofman and A. Sulem, eds.). A volume in honor of A. Bensoussans’s 60th birthday, Amsterdam: IOS Press; Tokyo: Ohmsha, 2001.

\bibitem{BBM}
J. Bourgain, H. Brezis, and P. Mironescu,
\emph{Another look at Sobolev spaces},
In: \emph{Optimal Control and Partial Differential Equations},
J. L. Menaldi, E. Rofman, and A. Sulem (eds.),
IOS Press, 2001.



%\bibitem{BBM2}
%J. Bourgain, H. Brezis, and P. Mironescu, \emph{Limiting embedding theorems for Wsp
% when $s\uparrow1$ and applications}, J. Anal. Math. 87 (2002), 77101, Dedicated to the memory of Thomas H. Wol , MR1945278, Zbl 1029.46030


\bibitem{BZ}
Y. D. Burago and V. A. Zalgaller,
\emph{Geometric inequalities},
Springer-Verlag, Berlin, 1988.


\bibitem{Bu}
A. Burchard, 
\emph{Cases of equality in the Riesz rearrangement inequality}, 
Ann. of Math. (2) 143 (1996), 499-527.



%\bibitem{CRS}
% L. Caffarelli, J.-M. Roquejo re, and O. Savin, 
% \emph{Nonlocal minimal surfaces}, 
% Comm. Pure Appl. Math. 63 (2010), 11111144, MR2675483, Zbl 1248.53009.

%\bibitem{CV}
%L. Caffarelli and E. Valdinoci, 
%\emph{Uniform estimates and limiting arguments for nonlocal minimal surfaces}, Calc. Var. Partial Di erential Equations 41 (2011), 203240,
% MR2782803, Zbl 05884582.

\bibitem{Cai}
X. Cai,
\emph{Affine logarithmic HLS and Beckner-Type logarithmic Sobolev inequalities},
arXiv: 2504.09251 (2025).

\bibitem{Cai2}
X. Cai,
\emph{Anisotropic fractional area measures},
arXiv: 2510.05279 (2025).


\bibitem{Ca}
E. Carlen,
\emph{Superadditivity of Fisher's information and logarithmic Sobolev inequalities},
J. Funct. Anal. 101 (1991), 194-211.

\bibitem{Ca2}
E. Carlen,
\emph{Duality and stability for functional inequalities}, Ann. Fac. Sci. Toulouse Math. (6) 26 (2017), 319-350.


\bibitem{Ci}
A. Cianchi,
\emph{A quantitative Sobolev inequality in BV},
J. Funct. Anal. 237 (2006), 466-481.

\bibitem{CF}
A. Colesanti and I. Fragal\`a,  
\emph{The first variation of the total mass of log-concave functions
and related inequalities},
Adv. Math. 244 (2013), 708–749.


\bibitem{Davy}
P. Davy,
\emph{Inequalities for moments of secant length},
Z. Wahrscheinlichkeitstheorie Verw. Geb. 68 (1984), 243-246.


\bibitem{DEFFL}
J. Dolbeault, M. J. Esteban, A. Figalli, R. L. Frank and M. Loss,
\emph{Sharp stability for Sobolev and log-Sobolev inequalities, with optimal dimensional dependence},
Camb. J. Math. 13 (2025), 359–430.

\bibitem{EG}
L. C. Evans and R. F. Gariepy,
\emph{Measure theory and fine properties of functions},
revised ed, CRC Press, Boca Raton, FL, 2015.


\bibitem{Fed}
H. Federer, 
\emph{Geometric measure theory}, 
Springer, Berlin, 1969. 

\bibitem{FF}
H. Federer and W. Fleming, 
\emph{Normal and integral currents}, 
Ann. of Math. (2) 72 (1960), 458-520.

\bibitem{FFMMM}
A. Figalli, N. Fusco, F. Maggi, V. Millot, and M. Morini,
\emph{Isoperimetry and stability properties of balls with respect to nonlocal energies},
Comm. Math. Phys. 336 (2015), 441-507.


\bibitem{FN}
A. Figalli and R. Neumayer,
\emph{Gradient stability for the Sobolev inequality: the case $p\ge 2$},
J. Eur. Math. Soc. (JEMS) 21 (2019), 319–354.


\bibitem{FZ}
A. Figalli and Y. R. Y. Zhang,
\emph{Sharp gradient stability for the Sobolev inequality},
Duke Math. J. 171 (2022), 2407-2459.


\bibitem{FS}
R. Frank and R. Seiringer,
\emph{Non-linear ground state representations and sharp Hardy inequalities},
J. Funct. Anal. 255 (2008), 3407-3430.


\bibitem{Garo}
N. Garofalo,
\emph{On the best constant in the nonlocal isoperimetric inequality of Almgren and Lieb},
Atti Accad. Naz. Lincei Rend. Lincei Mat. Appl. 31 (2020), 465-470.


%\bibitem{Gar1}
%R. J. Gardner,
%\emph{The Brunn-Minkowski inequality: A survey with proofs},
%available at https://faculty.gardner.wwu.edu/gorizia12.pdf.

%\bibitem{GHWY}
%R. J. Gardner, D. Hug, W. Weil, and D. Ye,
%\emph{The dual Orlicz-Brunn-Minkowski theory}, J. Math. Anal. Appl. 430 (2015), 810-829.

\bibitem{GZ}
R. J. Gardner and G. Zhang,
\emph{Affine inequalities and radial mean bodies}, 
Amer. J. Math. 120 (1998), 505-528.


\bibitem{GXZ}
L. Guo, D. Xi and Y. Zhao,
\emph{The $L_p$ chord Minkowski problem in a critical interval},
Math. Ann. (2023).


\bibitem{HV}
E. Hebey and M. Vaugon,
\emph{The best constant problem in the Sobolev embedding theorem for complete Riemannian manifolds},
Duke Math. J. 79 (1995), 235-279.


%\bibitem{HP}
%D. Hoffman and J. Spruck,
%\emph{A Sobolev inequality for Riemannian submanifolds}

\bibitem{Ho}
R. Howard,
\emph{The sharp Sobolev inequality and the Banchoff-Pohl inequality on surfaces},
Proc. Amer. Math. Soc. 126 (1998), 2779-2787.

%\bibitem{JL}
%D. Jerison and J. Lee,
%\emph{Extremals for the Sobolev inequality on the Heisenberg group and the CR Yamabe problem},
%J. Amer. Math. Soc. 1 (1988), 1-13.


\bibitem{HL1}
J. Haddad and M. Ludwig,
\emph{Affine fractional Sobolev and isoperimetric inequalities},
J. Differential Geom. 129 (2025), no. 3, 695-724.



\bibitem{HL3}
J. Haddad and M. Ludwig,
\emph{Affine Hardy-Littlewood-Sobolev inequalities}, 
J. Eur. Math. Soc. (2025), to appear.

\bibitem{HHLW}
J. Hu, Y. Huang, J. Lu and S. Wang,
\emph{The chord Gauss curvature flow and its $L_p$ chord Minkowski problem},
Acta Math. Sci. Ser. B (Engl. Ed.) 45 (2025), no. 1, 161–179.

%\bibitem{Kl}
%S. Klainerman,
%\emph{Remarks on the global Sobolev inequalities in the Minkowski space $\tr^{n+1}$},
%Comm. Pure Appl. Math. 40 (1987), 111-117.


%\bibitem{Kr}
%A. Kreuml,
%\emph{The anisotropic fractional isoperimetric problem with respect to unconditional unit balls}, 
%Comm. Pure Appl. Math. (7) 20 (2021), 783-799.

\bibitem{LSU}
D. Langharst, F. Mar\'in Sola and J. Ulivelli,
\emph{Higher-Order Reverse Isoperimetric Inequalities for Log-concave Functions},
arXiv:2403.05712 (2024).

\bibitem{Le}
M. Ledoux, 
\emph{Analytic and Geometric Logarithmic Sobolev Inequalities}, 
Journ\'ees \'equations aux d\'e riv\'ees partielles, Groupement de recherche 2434 du CNRS (2011), 1-15.

\bibitem{Leoni}
G. Leoni,
\emph{A First Course in Fractional Sobolev Spaces},
American Mathematical Society, 2023.

%\bibitem{Lieb2}
%E. Lieb,
%\emph{Existence and uniqueness of the minimizing solution of Choquard's nonlinear equation},
%Stud. Appl. Math. 57 (1977), 97-105.

\bibitem{Lieb}
E. Lieb, 
\emph{Sharp constants in the Hardy-Littlewood-Sobolev and related inequalities}, 
Ann. of Math. (2) 118 (1983), 349-374.

 


\bibitem{LL}
E. Lieb and M. Loss,
\emph{Analysis}, Second ed., Graduate Studies in Mathematics, vol. 14,  American Math. Soc., 2001.

\bibitem{Lud1}
M. Ludwig,
\emph{Anisotropic fractional perimeters}, 
J. Differential Geom. 96 (2014), 77-93.

\bibitem{Lud2}
M. Ludwig,
\emph{Anisotropic fractional Sobolev norms}, 
Adv. Math. 252 (2014), 150-157.

\bibitem{Lut1}
E. Lutwak,
\emph{Dual mixed volumes}, 
Pacific J. Math. 58 (1975), 531-538.

%\bibitem{Lut2}
%E. Lutwak,
%\emph{Inequalities for Hadwiger's harmonic quermassintegrals},
%Math. Ann. 280 (1988), 165-175.


\bibitem{LXYZ2020}
E. Lutwak, D. Xi, D. Yang and G. Zhang,
\emph{Chord measures in integral geometry and their Minkowski problems},
Comm. Pure Appl. Math.  77 (2024), 3277-3330.

\bibitem{Ma}
V. Maz$'$ya,
\emph{Lectures on isoperimetric and isocapacitary inequalities in the theory of Sobolev spaces},
Contemp. Math. 338 (2003), 307–340 

\bibitem{MS}
V. Maz$'$ya and T. Shaposhnikova,
\emph{On the Bourgain, Brezis, and Mironescu theorem concerning limiting embeddings of fractional Sobolev spaces}, J. Funct. Anal. 195 (2002), 230-238.

%\bibitem{Maggi}
%F. Maggi, 
%\emph{Sets of finite perimeter and geometric variational problems}, 
%Cambridge University Press, 2012.

\bibitem{Qin}
L. Qin,
\emph{Nonlocal energies of convex body and their log-Minkowski problem},
Adv. Math. 427, 2023: Art 109132.

\bibitem{Ren2}
D. Ren,
\emph{Topics in integral geometry},
World Scientific, Singapore, 1994.


\bibitem{Ren1}
D. Ren,
\emph{Two topics in integral geometry},
Proceedings of the 1981 Symposium on Differential Geometry and Differential Equations
(Shanghai–Hefei), Science Press, Beijing, 1984, 309-333.


\bibitem{Ro}
O. S. Rothaus,
\emph{Analytic inequalities, isoperimetric inequalities and logarithmic Sobolev inequalities},
J. Funct. Anal. 64 (1985), 296-313.


\bibitem{San}
L. A. Santal\'o,
\emph{Integral geometry and geometric probability},
Addison-Wesley, Reading, MA, 1976.


\bibitem{Sch}
R. Schneider,
\emph{Convex Bodies: the Brunn-Minkowski theory},
 Second expanded ed., Encyclopedia of Mathematics and its Applications, Cambridge
 University Press, 2014.


\bibitem{Sch2}
 R. Schneider,
 \emph{Inequalities for random flats meeting a convex body},
 J. Appl Prob. 22 (1985), 710-716.


 \bibitem{SW}
 R. Schneider and W. Weil,
 \emph{Stochastic and Integral Geometry},
 Probability and its Applications (New York), Springer-Verlag, Berlin, 2008.


\bibitem{Vis}
A. Visintin,
\emph{Nonconvex functionals related to multiphase systems}, 
SIAM J. Math. Anal. 21 (1990), 1281-1304.


\bibitem{XYZZ}
D. Xi, D. Yang, G. Zhang and Y. Zhao,
\emph{The $L_p$ chord Minkowski problem},
Adv. Nonlinear Stud. 23 (2023),  20220041.


\bibitem{XZ}
D. Xi and Y. Zhao,
\emph{Fractional affine area measures}, preprint (2025).


\bibitem{XC}
G. Xiong and W. Cheung,
\emph{Chord power integrals and radial mean bodies},
J. Math. Anal. Appl. 342 (2008), 629-637.



\bibitem{XS}
G. Xiong and X. Song,
\emph{Inequalities for chord power integrals},
J. Korean Math Soc. 45 (2008), 587-596.


\bibitem{Xu}
W. Xu, 
\emph{Entropy of chord distribution of convex bodies},
Proc. Amer. Math. Soc. 147 (2019), 3131-3141.


\bibitem{Yau}
S. T. Yau,
\emph{Sobolev inequality for measure spaces},
Tsing Hua lectures on geometry and analysis (Hsinchu, 1990-1991), 299-313, Internat. Press, Cambridge, MA, 1997.


%\bibitem{Zh1}
%G. Zhang
%\emph{Restricted chord projection and affine inequalities},
%Geom. Dedicata 39 (1991), 213-222.

\bibitem{Zh2}
G. Zhang,
\emph{Integral geometric inequalities},
Acta. Math. Sin. (Chin. Ser.) 34 (1991), 72-90.


\bibitem{Zh3}
G. Zhang,
\emph{Isoperimetric inequalities for integral geometric invariants of random lines},
Acta Math. Sci. Ser. B (Engl. Ed.) 45 (2025),  189-199.


\bibitem{Zh4}
G. Zhang,
\emph{The affine Sobolev inequality}, J. Differential Geom. 53 (1999), 183-202.



\end{thebibliography}

\end{document}